\declaretheorem[numberwithin=section]{theorem}
\declaretheorem[sibling=theorem]{proposition}
\declaretheorem[sibling=theorem]{definition}
\declaretheorem[sibling=theorem]{lemma}
\declaretheorem[sibling=theorem]{notation}
\declaretheorem[sibling=theorem,style=remark]{remark}
\declaretheorem[sibling=theorem,style=remark]{example}
\numberwithin{equation}{section}
\def\R{\mathbb R}
\def\C{\mathbb C}
\def\d{\delta}
\def\1{\mathbbm{1}}
\def\adots{
	\mathinner{\mkern1mu\raise1pt\hbox{.}\mkern2mu\raise4pt\hbox{.}
		\mkern2mu\raise7pt\vbox{\kern7pt\hbox{.}}\mkern1mu}}
\long\def\symbolfootnote[#1]#2{\begingroup%
\def\thefootnote{\fnsymbol{footnote}}\footnote[#1]{#2}\endgroup}
\def\C{{\mathbb C}}
\def\R{{\mathbb R}}
\def\CD{{\mathcal D}}
\def\CM{{\mathcal M}}
\def\CN{{\mathcal N}}
\def\CV{{\mathcal V}}
\def\unit{{\bf 1}}
\def\i{{\rm i}}
\def\d{{\rm d}}
\def\<{{\langle}}
\def\>{{\rangle}}
\def\CMD{\CM^\Delta}
\def\cc{^*}
\begin{document}

\title{Brown measure of $R$-diagonal operators, revisited}
\author{
	PING ZHONG \\
}

\address{
	Ping Zhong \\
	Department of Mathematics and Statistics\\
	University of Wyoming\\
	Laramie, WY 82070, USA\\
	{pzhong@uwyo.edu}
}

\thanks{Supported in part by Collaboration Grants for Mathematicians from Simons Foundation} 

\date{}
\maketitle

\begin{abstract}
We use subordination functions perspective to reformulate Haagerup--Schultz's approach for the Brown measure of $R$-diagonal operators. This allows us to simplify the original argument and find a connection with the other approach due to Belinschi-\'{S}niady-Speicher. The Brown measure formula can be rewritten in terms of subordination functions.
\end{abstract}

\section{Introduction}
The Brown measure of an operator in a finite von Neumann algebra with faithful, normal, tracial
state is an extension of eigenvalue counting measure of finite dimensional square matrices. In light of Voiculescu's breakthrough work \cite{Voiculescu1991} on asymptotic freeness of Gaussian random matrices, the Brown measure of a free random variable is a natural candidate of the limit of empirical spectral distribution of suitable random matrix model. Let $(X_n)_{n=1}^\infty$ be a sequence of $n\times n$ matrices whose entries are i.i.d. copies of a complex random variable with mean zero and unit variance.
The circular law \cite{Bordenave-Chafai-circular} says that the limit empirical spectral distribution of $X_n/\sqrt{n}$  is the Brown measure of Voiculescu's circular operator. The limit distribution in the single ring theorem \cite{Guionnet2011-singlering} is the Brown measure of $R$-diagonal operators, a large class of operators in the framework of free probability theory including circular operator. 

The calculation of explicit Brown measure formula is technical in general and only some special class of operators whose explicit Brown measure formulas are known up to now. The first nontrivial example was given by Haagerup--Larsen \cite{HaagerupLarsen2000} in the year 2000, almost twenty years after L.G.Brown introduced his measure, where they calculated the Brown measure of all bounded $R$-diagonal operators. Haagerup--Schultz \cite{HaagerupSchultz2007} extended the Brown measure formula for unbounded $R$-diagonal operators, and such work played an important role in their seminar work \cite{HaagerupSchultz2009} on invariant subspace problem in type II$_1$ factors (see also \cite{DykemaSZ20115, DykemaSZ2017-unbounded} for some generalizations). 

The proof in \cite{HaagerupLarsen2000} uses Haagerup's work \cite{Haagerup-R-S-new} on a different proof for the multiplicative property of Voiculescu's $S$-transforms and the fact that any $R$-diagonal operator $T$ has the same distribution of a product $ax$ of free self-adjoint symmetric distributed elements. The proof in \cite{HaagerupSchultz2007} relies on a result of Nica--Speicher \cite{Nica-Speicher-1998duke} which says that for two $*$-free $R$-diagonal operators $S$ and $T$, the symmetrization of the distribution of $|S+T|$ is the free additive convolution of symmetrization distibutions of $|S|$ and $|T|$ (see Proposition \ref{Rdiag-additive}), and the following observation. Let $U$ be a Haar unitary operator $*$-free from $T$. For any $\lambda\in\mathbb{C}$, then
$|T-\lambda \unit|$ has the same distribution as $|T+|\lambda| U|$. Hence, for any $\lambda\in\mathbb{C}\backslash\{0\}$ fixed, the distribution of $|T-\lambda \unit|$ can be calculated as an addition of two $R$-diagonal operators. The problem is then reduced to understand the free additive convolution of a symmetric probability measure and a Bernoulli distribution. 
The above observations are the key to find a formula for the Fuglede--Kadison determinant \cite{FugledeKadison1952} of $T-\lambda\unit$.
The Brown measure of $T$ is then obtained by taking the distributional Laplacian of the logarithm of the Fuglede--Kadison determinant. In a joint work with Bercovici \cite{BercoviciZhong2022Rdiag}, we extend some ideas in this paper to study the Brown measure of a sum of two free random variables, one of which is $R$-diagonal.

We observe that subordination functions can be used to make Haagerup--Schultz's approach more transparent and several technical arguments in their work can be simplified when it cooperates with subordination ideas. The reformulation of Fugulede--Kadison formulas in terms of subordination functions allows us to simplify the original proof for the Brown measure formula of $R$-diagonal operators. By using a well-known result from logarithmic potential theory and rotation invariant property of the Brown measure of $R$-diagonal operators, we obtain a new proof of the Brown measure formula. We can also express the Brown measure formula in terms of subordination functions. 
  
The Hermitian reduction method is a very general and powerful technique to study the distributions of non-normal operators in free probability. It is known \cite{NSS2001-Rdiag} that an operator $T$ is $R$-diagonal if and only if the matrix-valued operator \begin{tiny}$\begin{bmatrix}0&T\\
	T^*&0\end{bmatrix}$\end{tiny} and $M_2(\mathbb C)$ are free with amalgamation over the algebra of diagonal matrices with a canonical conditional expectation. In \cite{BSS2018}, Belinschi--Śniady--Speicher used this characterization of $R$-diagonal operators and Hermitian reduction method to give a new proof of the Brown measure formula for $R$-diagonal operators. The subordination ideas found in \cite{HaagerupSchultz2007} allows us to find the connection between the two method which seems to be quite different in a first impression. 

The paper is organized as follows. In Section 2, we collect some preliminaries about Brown measure of $R$-diagonal operators to set up notations. In Section 3 and 4, we reformulate Haagerup--Schultz's work using subordination ideas. We present a new proof in Section 5 and discuss the connection with the Hermitian reduction method in Section 6. Finally, we show some applications in Section 7. 

\section{Preliminaries and notations}
Let $\CM$ be a finite von Neumann algebra with a faithful, normal, tracial
state $\tau$. We let $\tilde\CM$ denote the set of closed, densely defined
operators affiliated with $\CM$. In 1983, L.G.Brown \cite{Brown1986} introduced a spectral distribution for non-normal operators in $\tilde\CM$ which is a natural extension of eigenvalue counting measure of square matrices.  The definition of Brown measure can be extended to a larger class $\CMD$ of possibly unbounded operators in $\CM$ (\cite{HaagerupSchultz2007} and \cite[Appendix]{Brown1986}).

Given $T\in \tilde\CM$, and let $\mu_{|T|}$ be the spectral distribution of $|T|=(T^*T)^{1/2}$ with respect to $\tau$. Let $\CMD$ be the set of operators $T\in\tilde\CM$ satisfying the condition
\[
    \tau(\log^+|T|)= \int_0^\infty \log^+(t)\,\d\mu_{|T|}(t)<\infty.
\]
The \emph{Fuglede--Kadison determinant} of $T$ is defined by
\[
\Delta(T)= \exp\Big(\int_0^\infty \log t\,\d\mu_{|T|}(t)\Big).
\]
It is known \cite{HaagerupSchultz2007} that the set $\CMD$ is a subspace of $\tilde\CM$. In particular, for $T\in \CMD$ and $\lambda\in\mathbb{C}$, $T-\lambda \unit\in \CMD$. 
If $T\in  \CMD$ and $\Delta(T)>0$, then $ker(T)=\{0\}$, its inverse  $T^{-1}\in \CMD$, and $\Delta(T^{-1})=\frac{1}{\Delta(T)}$.

For $T\in\CMD$, the function $\lambda\mapsto f(\lambda):= \log \Delta(T-\lambda \unit)$ taking values in $ [-\infty, \infty)$ is subharmonic on $\mathbb{C}$. The \emph{Riesz measure} associated with this subharmonic function is called the \emph{Brown measure} of $T$ and we denote it by $\mu_T$.  More precisely, 
\[
   \d\mu_T= \frac{1}{2\pi}\nabla^2 f \,\d\lambda
\]
taken in the distribution sense. 
The Brown measure $\mu_T$ is the unique probability measure on $\mathbb{C}$
  satisfying
\begin{itemize}
    \item[(i)]
        \[
        \int_\C\log^+|z|\,\d\mu_T(z)<\infty,
        \]
    \item[(ii)] 
       \begin{equation*}
        \forall \lambda\in\C:\quad \log \Delta(T-\lambda \unit)= \int_\C \log|\lambda-z|\,\d \mu_T(z).
      \end{equation*}
  \end{itemize}

The family of $R$-diagonal operators was introduced by Nica--Speicher \cite{NicaSpeicher-Rdiag} which covers a large class of interesting operators in free probability theory. They have a number of remarkable symmetric properties.  
In \cite{HaagerupSchultz2007}, the class of $R$-diagonal operators was extended to unbounded case.
Recall that \cite{NicaSpeicher-Rdiag, HaagerupSchultz2007} $T\in\tilde\CM$ is $R$-diagonal if there exist a von Neumann algebra $\CN$, with a faithful, normal, tracial state, and $*$-free elements $U, H\in \CN$ with $U$ Haar unitary, $H\geq{0}$, and such that $T$ has the same $*$-distribution as $UH$. 
Let $V|T|$ be the polar decomposition of $T$.
If $T\in\tilde\CM$ is $R$-diagonal with ${\rm ker}(T)=0$, then $V$ is a Haar unitary which is $*$-free from $|T|$. 
	If $T\in \tilde\CM$ is $R$-diagonal with
	${\rm ker}(T)=0$, then $T$ has an
	inverse $T^{-1}\in\tilde\CM$, and $T^{-1}$ is $R$-diagonal as
	well.

Observe that the Brown measure of a $R$-diagonal operator $T$ is invariant under the rotation. Hence, the Brown measure of $T$ is completely determined by the distribution of $|T|$. In fact, 
the Brown measure of a $R$-diagonal operator $T$ can be expressed in terms of some analytic transformation of the spectral probability measure $\mu_{T^*T}$. 
For a probability measure $\mu$ on $\mathbb{R}_+=[0,\infty)$, define $\psi_\mu:\mathbb{C}\backslash (0,\infty)\rightarrow\mathbb{C}$ by
\[
    \psi_\mu(z)=\int_0^\infty \frac{zt}{1-zt}\,\d\mu(t), \qquad z\in
  \C\setminus (0,\infty).
\]
One can check that: (i) $\psi_\mu'(x)>0, x\in (-\infty, 0)$; (ii) $\psi_\mu(x)\rightarrow \delta -1$ as $x\rightarrow -\infty$ where $\delta=\mu(\{ 0\})$; (iii) $\psi_\mu(x) \rightarrow 0$ as $x\rightarrow 0$ (see \cite{BercoviciVoiculescu1993}). Hence, in a neighborhood $\CV_\mu$ of $(\delta-1,0)$, the inverse $\chi_\mu$ of $\psi_\mu$ exists: $\chi_\mu(z)=\psi_\mu^{\langle-1\rangle}(z), z\in \CV_\mu$. Then Voiculescu's $S$-transform is defined by
\[
   S_\mu(z)=\frac{z+1}{z}\chi_\mu(z), \quad z\in \CV_\mu.
\] 
For two probability measures $\mu,\nu$ on $\mathbb{R}_+$,
the $S$-transform has the remarkable property that $S_{\mu\boxtimes\nu}(z)=S_{\mu}(z)S_\nu(z)$ in a region where all these functions are defined.

\begin{notation}\label{notation:lambda-mu-1-2} 
Let $\mu$ be a probability measure on $[0,\infty)$ which
  is not a Dirac measure, and
  put
  \[
  \lambda_1(\mu) = \Bigg(\int_0^\infty
  \frac{1}{u^2}\,\d\mu(u)\Bigg)^{-\frac12} \quad {\rm and} \quad \lambda_2(\mu) = \Bigg(\int_0^\infty
  u^2\,\d\mu(u)\Bigg)^{\frac12},
  \]
  with the convention that $\infty^{-\frac12}=0$. 
Then $0\leq \lambda_1(\mu)<\lambda_2(\mu)\leq \infty$. 

Note that
  \[
  \lambda_2(\mu_{|T|})=\Bigg(\int_0^\infty u^2\,\d\mu_{|T|}(u)\Bigg)^\frac12
  = \|T\|_2
  \]
  and
  \[
  \lambda_1(\mu_{|T|})= \Bigg(\int_0^\infty u^{-2}\,\d\mu_{|T|}(u)\Bigg)^{-\frac12}
  = \|T^{-1}\|_2^{-1},
  \]
  where $ \|T^{-1}\|_2:=+\infty$ in case ${\rm ker}(T)\neq 0$. Often, we denote
  \[
    \lambda_1(T)= \lambda_2(\mu_{|T|}), \qquad\lambda_2(T)=\lambda_1(\mu_{|T|}).
  \]
\end{notation}

\vspace{.2cm}

The Brown measure of bounded $R$-diagonal operators was calculated by Haagerup-Larsen \cite{HaagerupLarsen2000} and the result was further extended by Haaagerup-Schultz \cite{HaagerupSchultz2007} for all $R$-diagonal operators in $\CMD$. 

\begin{theorem}
\label{thm-main}
  Let $T$ be an $R$--diagonal element in $\CMD$ with Brown measure
  $\mu_T$, and suppose $\mu_{T}$ is not a Dirac measure. The Brown measure $\mu_{T}$ is the unique rotationally invariant probability measure such that
  \begin{align*}
    \mu_{T}\{ \lambda\in\mathbb{C}: |\lambda|\leq r\}=
      \begin{cases}
         0, \quad & \text{for} \quad  r<\lambda_1(\mu_{|T|});\\
         1+S^{\langle -1 \rangle}_{\mu_{T^*T}}(r^{-2}),\quad &\text{for} \quad \lambda_1(\mu_{|T|})<r<\lambda_2(\mu_{|T|});\\
         1,\quad &\text{for}\quad r\geq \lambda_2(\mu_{|T|}).
      \end{cases}
  \end{align*}
When $\mu_{|T|}(\{0\})=\delta>0$, the Brown measure $\mu_T$ has a mass at the origin with the mass $\mu_T(\{0\})=\delta$. 
\end{theorem}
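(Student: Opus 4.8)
The plan is to exploit the rotation invariance of $\mu_T$ to collapse the two-dimensional problem to a one-dimensional one for the radial distribution function $F(r):=\mu_T\{\lambda\in\C:|\lambda|\le r\}$, and then to read $F$ off from the Fuglede--Kadison determinant $f(\lambda):=\log\Delta(T-\lambda\unit)$. First I would invoke the elementary fact from logarithmic potential theory that, because $\mu_T$ is invariant under rotation, $f(re^{i\phi})$ is independent of $\phi$ and equals $\int_0^\infty\log\max(r,s)\,\d F(s)$; this uses only the planar mean value $\frac{1}{2\pi}\int_0^{2\pi}\log|re^{i\phi}-z|\,\d\phi=\log\max(r,|z|)$ together with property (ii) of the Brown measure. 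Differentiating $f(r)=F(r)\log r+\int_r^\infty\log s\,\d F(s)$ in $r$ gives the clean identity $F(r)=r\,f'(r)$ for almost every $r>0$, so the whole problem becomes the computation of the radial derivative of $\log\Delta(T-\lambda\unit)$.

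Second, I would reduce the determinant to a free additive convolution. For $\lambda\neq0$ the Haagerup--Schultz observation gives that $|T-\lambda\unit|$ and $|T+|\lambda|U|$ have the same distribution, $U$ being a Haar unitary $*$-free from $T$; since $\Delta$ depends only on the law of the modulus, $f(\lambda)=\log\Delta(T+aU)$ with $a:=|\lambda|$. Because $\log|\cdot|$ is even, $f=\tfrac12\int_\R\log|t|\,\d\tilde\mu_a(t)$ where $\tilde\mu_a$ is the symmetrization of $\mu_{|T+aU|}$, and by Proposition~\ref{Rdiag-additive} one has $\tilde\mu_a=\tilde\mu_{|T|}\boxplus\tfrac12(\delta_{a}+\delta_{-a})$, since $|aU|=a\unit$ makes $aU$ an $R$-diagonal operator whose symmetrized modulus is the symmetric two-point law. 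Thus $f'(a)$ is controlled by the free convolution of the fixed symmetric measure $\tilde\mu_{|T|}$ with a one-parameter family of Bernoulli laws.

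Third --- the technical heart --- I would differentiate $f$ in $a$ and feed in the subordination-function reformulation of the determinant from Sections~3--4. Differentiating $\int_\R\log|t|\,\d(\tilde\mu_{|T|}\boxplus\tfrac12(\delta_a+\delta_{-a}))(t)$ in $a$ produces the Cauchy transform of the convolution evaluated along the subordination function attached to the Bernoulli factor, whose own Cauchy transform $z/(z^2-a^2)$ is explicit; the subordination relations then let me eliminate the convolution and express $f'$ through the data of $\mu_{T^*T}$ alone. The main obstacle is the bookkeeping that converts this subordination expression, after the substitutions $F(r)=r f'(r)$ and the passage between $\tilde\mu_{|T|}$ and the pushforward $\mu_{T^*T}$, into the relation $S_{\mu_{T^*T}}(F(r)-1)=r^{-2}$, i.e.\ $F(r)=1+S^{\langle-1\rangle}_{\mu_{T^*T}}(r^{-2})$; unwinding the definitions $\chi_\mu=\psi_\mu^{\langle-1\rangle}$ and $S_\mu(z)=\tfrac{z+1}{z}\chi_\mu(z)$ and verifying that the correct analytic branch is selected is where I expect the argument to be most delicate. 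The endpoints are then pinned down by the range of $S_{\mu_{T^*T}}$ on $(\delta-1,0)$: a short asymptotic analysis of $\psi_{\mu_{T^*T}}$ shows $S_{\mu_{T^*T}}(0^-)=\lambda_2(\mu_{|T|})^{-2}=\|T\|_2^{-2}$ and $S_{\mu_{T^*T}}((\delta-1)^+)=\lambda_1(\mu_{|T|})^{-2}=\|T^{-1}\|_2^{2}$, using Notation~\ref{notation:lambda-mu-1-2}; hence the formula holds exactly for $r\in(\lambda_1(\mu_{|T|}),\lambda_2(\mu_{|T|}))$ and $F$ saturates at $0$ and $1$ outside this annulus.

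Finally, for the atom at the origin I would analyse the limit $r\to0^+$. When $\delta=\mu_{|T|}(\{0\})>0$ one has $\lambda_1(\mu_{|T|})=0$, so the middle regime reaches down to $r=0$; since $\psi_{\mu_{T^*T}}(x)\to\delta-1$ as $x\to-\infty$ (property (ii) of $\psi_\mu$ recorded above, with $\mu_{T^*T}(\{0\})=\delta$), the inverse gives $S^{\langle-1\rangle}_{\mu_{T^*T}}(r^{-2})\to\delta-1$ as $r^{-2}\to\infty$, whence $F(0^+)=1+(\delta-1)=\delta$ and the point mass $\mu_T(\{0\})=\delta$ follows. Uniqueness requires no separate argument: the displayed piecewise formula determines the radial distribution $F$ completely, and together with rotation invariance this specifies a single probability measure on $\C$.
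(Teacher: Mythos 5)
Your proposal is correct in outline, but note that it reproduces the paper's \emph{alternative} proof (Section \ref{section-alternative-proof}) rather than its primary one. Your first step --- rotation invariance plus the circular mean $\frac{1}{2\pi}\int_0^{2\pi}\log|re^{i\phi}-z|\,d\phi=\log\max(r,|z|)$, yielding $\mu_T\{|\lambda|\le r\}=r\,\frac{d}{dr}\log\Delta(T-r\unit)$ --- is exactly Lemma \ref{lemma:log-potential} (Saff--Totik), and your third step is the radial differentiation of the Fuglede--Kadison formula of Theorem \ref{thm5.15}(i). The ``bookkeeping'' you flag as delicate does close: differentiating $\log\Delta(T-\lambda\unit)=\tfrac12\log\frac{r^2}{r^2+s(r,0)^2}+\tfrac12\log\Delta(T^*T+s(r,0)^2\unit)$ in $r$, the terms containing $ds(r,0)/dr$ cancel precisely because $s(r,0)$ solves $s^2+r^2=s/h(s)$, leaving $rf'(r)=\frac{s(r,0)^2}{s(r,0)^2+r^2}$; the identity $\psi_{\mu_{T^*T}}(-1/s(r,0)^2)=s(r,0)h(s(r,0))-1=-r^2/(s(r,0)^2+r^2)$ then converts this to $1+S^{\langle-1\rangle}_{\mu_{T^*T}}(r^{-2})$ exactly as you predict, and your endpoint and atom analysis via the range of $S_{\mu_{T^*T}}$ matches Proposition \ref{prop:s-trans-monotonicity} and the paper's treatment. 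By contrast, the paper's primary proof (Section \ref{section:Brown-Rdiag-proof-1}) computes the Wirtinger derivative $\partial_\lambda\log\Delta(T-\lambda\unit)=\tfrac12\tau\big((\lambda\unit-T)^*[(\lambda\unit-T)^*(\lambda\unit-T)]^{-1}\big)$ through the subordination function (Lemma \ref{lemma:formula-derivative-t-0}), obtains the density $-\frac{1}{2\pi|\lambda|^4}\big(S^{\langle-1\rangle}_{\mu_{T^*T}}\big)'(|\lambda|^{-2})$ by one further $\partial_{\bar\lambda}$, and then recovers the distribution function from monotonicity. What your route buys is that the radial distribution function appears after a single radial differentiation, with no need to identify the density first; the cost is that you must justify the mean-value lemma and the differentiability of the radial potential, which the paper handles by noting that $s(r,0)$ is real-analytic on $(\lambda_1(\mu_{|T|}),\lambda_2(\mu_{|T|}))$, so you should make that regularity step explicit rather than leaving it implicit in ``for almost every $r$.''
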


\begin{proposition}\label{prop:s-trans-monotonicity}
	\cite[Lemma 4]{Haagerup-Moller-LLN}
	Let $\mu$ be a probability measure on $[0,\infty)$ with $\delta=\mu(\{0\})<1$. Then $S_\mu$ is decreasing on $(\delta-1,0)$ and $S_\mu((\delta-1,0))=(b^{-1},a^{-1})$, where
	\[
	a=\left(\int_0^\infty \frac{1}{u}d\mu(u)\right)^{-1}, \quad b=\int_0^\infty u \,d\mu(u).
	\]
\end{proposition}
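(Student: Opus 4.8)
The plan is to pass to the variable $x=\chi_\mu(z)$ and study the composite $F(x):=S_\mu(\psi_\mu(x))$ on $(-\infty,0)$. Properties (i)--(iii) recalled above say that $\psi_\mu$ is a strictly increasing bijection of $(-\infty,0)$ onto $(\delta-1,0)$, with inverse $\chi_\mu$; hence $S_\mu$ is decreasing on $(\delta-1,0)$ if and only if $F$ is decreasing on $(-\infty,0)$, and the two maps have the same image. Using $\chi_\mu(\psi_\mu(x))=x$ together with the definition $S_\mu(z)=\frac{z+1}{z}\chi_\mu(z)$, and the elementary identity $\psi_\mu(x)+1=\int_0^\infty (1-xt)^{-1}\,\d\mu(t)$, I would write $F(x)=x\,\frac{\psi_\mu(x)+1}{\psi_\mu(x)}$.

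For the monotonicity I would differentiate. Writing $P(x)=\psi_\mu(x)+1$ and $Q(x)=\psi_\mu(x)=P(x)-1$, one checks that $P'=Q'=:R$, so the numerator of $F'=\big((P+xR)Q-xPQ'\big)/Q^2$ collapses to $N:=PQ-xR$. Setting $g(t)=(1-xt)^{-1}$ (so $g>0$ for $x<0$), the substitutions $P=\int g\,\d\mu$, $Q=\int(g-1)\,\d\mu$ and $xR=\int(g^2-g)\,\d\mu$ give the clean identity
\[
N=\Big(\int_0^\infty g\,\d\mu\Big)^2-\int_0^\infty g^2\,\d\mu .
\]
By the Cauchy--Schwarz inequality for the probability measure $\mu$ this is $\le 0$, and it is strictly negative because $t\mapsto g(t)$ is injective for $x\ne0$ and $\mu$ is not a Dirac mass, so $g$ is non-constant $\mu$-a.e. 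Since $Q^2>0$, this yields $F'<0$, i.e. $F$ (and hence $S_\mu$) is strictly decreasing. This identity is the real engine of the argument; everything else is bookkeeping.

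It remains to identify the endpoints by computing the one-sided limits of $F$. As $x\to0^-$ one has $\psi_\mu(x)/x\to\int_0^\infty t\,\d\mu(t)=b$ (monotone convergence, allowing $b=\infty$) and $\psi_\mu(x)+1\to1$, so $F(x)\to b^{-1}$. As $x\to-\infty$ I would use $\psi_\mu(x)\to\delta-1$ and $xP(x)=\int_0^\infty \frac{x}{1-xt}\,\d\mu(t)\to-\int_0^\infty t^{-1}\,\d\mu(t)$ (monotone convergence again), giving $F(x)=xP/Q\to \int_0^\infty t^{-1}\,\d\mu(t)=a^{-1}$. Combined with continuity and strict monotonicity, the image of $(-\infty,0)$ under $F$ --- equivalently of $(\delta-1,0)$ under $S_\mu$ --- is exactly the open interval $(b^{-1},a^{-1})$, and $b^{-1}<a^{-1}$ by the AM--HM inequality. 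I expect the only delicate points to be the routine justification of differentiating under the integral on compact subsets of $(-\infty,0)$ and the correct treatment of the degenerate cases $b=\infty$ (then $b^{-1}=0$) and $\delta=\mu(\{0\})>0$ (then $\int t^{-1}\,\d\mu=\infty$, so $a=0$ and the right endpoint is $+\infty$), both handled uniformly by monotone convergence.
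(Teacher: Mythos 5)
The paper does not prove this proposition; it is quoted verbatim from Haagerup--M\o ller (\cite[Lemma 4]{Haagerup-Moller-LLN}) with no argument supplied, so there is nothing internal to compare against. Judged on its own, your proof is correct and self-contained, and it is the natural argument: pass to $F(x)=S_\mu(\psi_\mu(x))=x(\psi_\mu(x)+1)/\psi_\mu(x)$ on $(-\infty,0)$, where the identity $\psi_\mu+1=\int(1-xt)^{-1}\,d\mu$ together with $(P+xR)Q-xPR=PQ-xR$ (using $Q-P=-1$) reduces the numerator of $F'$ to $\bigl(\int g\,d\mu\bigr)^2-\int g^2\,d\mu\le 0$ by Cauchy--Schwarz; the endpoint computations by monotone convergence are also right. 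Two small points worth tightening. First, strict negativity of $F'$ requires that $\mu$ not be a Dirac mass (otherwise $S_\mu$ is constant and the stated image $(b^{-1},a^{-1})$ is empty); the proposition as quoted only assumes $\mu(\{0\})<1$, so you should either add that hypothesis or note that the Dirac case is degenerate and excluded. Second, in the limit $x\to-\infty$ you divide $xP(x)\to-\int t^{-1}\,d\mu$ by $Q\to\delta-1$, so the limit of $F$ is literally $\frac{1}{1-\delta}\int_0^\infty t^{-1}\,d\mu(t)$ rather than $\int_0^\infty t^{-1}\,d\mu(t)$; this is harmless because the factor is $1$ when $\delta=0$ and both quantities are $+\infty$ when $\delta>0$, but the intermediate equality as you wrote it skips that case distinction and should be stated explicitly.
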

	Hence, the map $r\mapsto S_{\mu_{T^*T}}^{\langle -1 \rangle }(r^{-2})$ is inreasing on $(\lambda_1(\mu_{|T|}), \lambda_2(\mu_{|T|}))$ with range $(\delta-1,0)$, where $\delta=\mu(\{0\})$.

Given a probability measure $\sigma$ on $\mathbb{R}$, let $\tilde\sigma$ denote the
{\it symmetrization} of $\mu$. That is, $\tilde\sigma$ is a symmetric probability measure given
by
\[
\tilde\sigma(B) = \textstyle{\frac12}(\sigma(B)+\sigma(-B)), 
\]
where $B$ is any Borel measurable set in $\mathbb{R}$.
 Let $\mu, \nu$ be probability measures on $\mathbb{R}$, their free convolution is denoted by $\mu\boxplus \nu$.	
The following result is a powerful tool to calculate the distribution of the sum of $R$-diagonal operators. 
\begin{proposition} (\cite{HaagerupSchultz2007, Nica-Speicher-1998duke})
	\label{Rdiag-additive} 
	Let $S,T\in\tilde\CM$ be $\ast$--free $R$--diagonal
	elements. Then $S+T\in\tilde\CM$ is $R$-diagonal and 
	\begin{equation}
	\tilde\mu_{|S+T|}= \tilde\mu_{|S|}\boxplus \tilde\mu_{|T|}.
	\end{equation}
\end{proposition}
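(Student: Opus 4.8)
The plan is to pass to the Hermitian reduction and recast both assertions as statements about the free additive convolution of the \emph{symmetrized} laws. I would work in $M_2(\CM)=M_2(\C)\otimes\CM$ with the tracial state $\tau_2=\tfrac12\tr_2\otimes\tau$, and set
\[
\tilde S=\begin{pmatrix}0&S\\ S^*&0\end{pmatrix},\qquad
\tilde T=\begin{pmatrix}0&T\\ T^*&0\end{pmatrix}.
\]
Conjugation by $\mathrm{diag}(1,-1)$ sends $\tilde S$ to $-\tilde S$, so the law of the self-adjoint element $\tilde S$ is symmetric; since $\tilde S^2=\mathrm{diag}(SS^*,S^*S)$ has, under $\tau_2$ and traciality, the same moments as $|S|^2$, this law is exactly $\tilde\mu_{|S|}$, and likewise $\tilde T$ has law $\tilde\mu_{|T|}$. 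Because $\widetilde{S+T}=\tilde S+\tilde T$, the measure $\tilde\mu_{|S+T|}$ is the law of $\tilde S+\tilde T$; hence, to obtain the identity $\tilde\mu_{|S+T|}=\tilde\mu_{|S|}\boxplus\tilde\mu_{|T|}$ it suffices to prove that $\tilde S$ and $\tilde T$ are free in $(M_2(\CM),\tau_2)$.

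To establish this freeness I would use the two cumulant characterizations underlying Nica--Speicher's theory: an element is $R$-diagonal precisely when its only nonvanishing $*$-cumulants are the alternating ones $\kappa_{2n}(a,a^*,\dots)$ and $\kappa_{2n}(a^*,a,\dots)$, and two subalgebras are free precisely when all their mixed cumulants vanish. Since $S$ and $T$ are $*$-free, every mixed $*$-cumulant in the letters $\{S,S^*\}$ and $\{T,T^*\}$ vanishes; combined with the alternating structure this determines the joint $*$-distribution of $(S,T)$ from the two individual alternating cumulant sequences alone. Translating these data through the $2\times2$ block structure — odd powers of $\tilde S$ produce the off-diagonal words in $S,S^*$ while even powers produce the diagonal words in $SS^*,S^*S$ — one checks that the mixed cumulants of $\tilde S$ and $\tilde T$ with respect to $\tau_2$ vanish. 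A more transparent variant uses the unitary-invariance characterization: replacing $S$ by $uS$ and $T$ by $vT$ for Haar unitaries $u,v$ with $\{u\},\{v\},\{S\},\{T\}$ all $*$-free leaves the joint $*$-distribution of $(S,T)$ unchanged (apply $R$-diagonality factor by factor, together with the freeness that keeps the two factors independent), and the inserted free unitaries decouple the two Hermitizations.

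Granting the freeness of $\tilde S$ and $\tilde T$, the convolution identity is immediate from the definition of $\boxplus$. For the $R$-diagonality of $S+T$ I would argue directly with cumulants: free cumulants are additive over $*$-free elements, so the $*$-cumulants of $S+T$ are the sums of those of $S$ and $T$; as both summands are supported on alternating tuples, so is their sum, whence $S+T$ is $R$-diagonal by the cumulant characterization. Equivalently, $\tilde S+\tilde T$ inherits the block symmetry characterizing a Hermitization, from which one reads off that $S+T$ is $R$-diagonal.

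The main obstacle is exactly the step asserting honest \emph{scalar} freeness of $\tilde S$ and $\tilde T$: a priori the Hermitizations interact with the common matrix-unit structure only through amalgamation over the diagonal $D_2$ (the characterization quoted in the introduction), and two elements free with amalgamation over $D_2$ are in general \emph{not} scalar-free. It is precisely the alternating $R$-diagonal structure that forces the $D_2$-valued distributions to be balanced enough for the operator-valued freeness to collapse to scalar freeness of the symmetrized laws, and making this collapse rigorous is the heart of the matter. A second, analytic, obstacle is that $S,T\in\CMD$ may be unbounded, so the cumulants above are not literally defined; I would handle this by truncating through the polar decomposition — replacing $|S|$ by $|S|\wedge n$ and $S$ by $V(|S|\wedge n)$ to obtain bounded $R$-diagonal approximants — proving the identity there, and passing to the limit using the weak convergence of $\tilde\mu_{|S|\wedge n}$ together with the continuity of $\boxplus$, the moment and tail control being supplied by membership in $\CMD$.
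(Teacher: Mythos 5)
The paper offers no proof of this proposition: it is quoted verbatim from Haagerup--Schultz and Nica--Speicher, so there is nothing internal to compare your argument against. Your architecture is nonetheless the standard one behind those references (Hermitize, identify the law of $\widetilde S=\bigl[\begin{smallmatrix}0&S\\S^*&0\end{smallmatrix}\bigr]$ under $\tau_2=\frac12\mathrm{tr}_2\otimes\tau$ with $\tilde\mu_{|S|}$, and reduce everything to scalar freeness of $\widetilde S$ and $\widetilde T$), and the peripheral steps you do carry out are correct: the symmetry and moment identifications, the additivity-of-cumulants argument for $R$-diagonality of $S+T$ in the bounded case, and the observation that the convolution identity follows once freeness is granted.

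The genuine gap is exactly where you locate it, and it is not a removable formality: the assertion that $\widetilde S$ and $\widetilde T$ are free in $(M_2(\CM),\tau_2)$ \emph{is} the theorem, and neither of your two sketches proves it. The cumulant ``translation'' through the $2\times2$ block structure is not a routine bookkeeping step -- a priori the two Hermitizations are only free with amalgamation over the diagonal subalgebra $\CD$, and scalar cumulants of $\tau_2$ do not decompose entrywise -- so ``one checks that the mixed cumulants vanish'' is precisely the computation that must be exhibited. Your second variant reduces, after writing $S\sim v_1h_1$, $T\sim v_2h_2$ and conjugating, to the claim that $\bigl[\begin{smallmatrix}0&h_1\\h_1&0\end{smallmatrix}\bigr]$ and $W\bigl[\begin{smallmatrix}0&h_2\\h_2&0\end{smallmatrix}\bigr]W^*$ are free for $W=\mathrm{diag}(w,1)$ with $w$ Haar unitary free from $\{h_1,h_2\}$; since $W$ is \emph{not} a Haar unitary of $M_2(\CM)$, the usual ``conjugation by a free Haar unitary creates freeness'' principle does not apply off the shelf, and this is the nontrivial lemma of Haagerup--Larsen that your proposal silently invokes. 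Two further loose ends in the unbounded case: when $\ker S\neq0$ the polar part is not a Haar unitary, so the truncation $V(|S|\wedge n)$ must be set up differently, and you need an argument (beyond weak convergence of the symmetrized laws) that $R$-diagonality survives the limit in measure, which is a separate lemma in Haagerup--Schultz.
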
	

The calculation of free additive convolution of two probability measures $\mu,\nu$ on $\mathbb{R}$ uses Voiculescu's $R$-transform. 
For a probability measure $\mu$ on $\mathbb{R}$, its Cauchy transform is defined as
\[
G_\mu(z)=\int_\R \frac{1}{z-x}d\mu(x), \qquad z\in\mathbb{C}^+;
\]
and the $F$-transform is its reciprocal, $F_\mu(z)=1/G_\mu(z)$. The Voiculescu's $R$-transform is defined by $R_\mu(z)=G_\mu^{\langle -1\rangle}(z)-1/z$ for $z$ in a truncated Stolz angle $\{z\in\C\ : \Im z>\beta, \left\vert \Re z\right\vert<\alpha (\Im z)\}$ for some $\alpha, \beta>0$. 
The $R$-transform has remarkable property that $R_{\mu\boxplus\nu}(z)=R_\mu(z)+R_\nu(z)$ for $z$ in some domain where all these functions are defined. Another way to study free convolution is to use subordination functions, which are analytic functions $\omega_1, \omega_2:\mathbb{C}^+\rightarrow\mathbb{C}^+$ such that
\[
F_{\mu}(\omega_1(z))=F_{\nu}(\omega_2(z))=F_{\mu\boxplus\nu}(z), \qquad z\in\mathbb{C}^+. 
\]
 For a probability measure $\mu$ on $\mathbb{R}$, denote $H_\mu(z)=F_\mu(z)-z$. In \cite{BB2007new}, Belinschi and Bercovici showed that $\omega_1, \omega_2$ can be obtained from the following fixed point equations
\[
\omega_1(z)=z+H_{\mu_2}(z+H_{\mu_1}(\omega_1(z))), \qquad
\omega_2(z)=z+H_{\mu_1}(z+H_{\mu_2}(\omega_2(z))).
\]
In operator-valued free probability theory, all the above analytic transformations have operator-valued analogues. See \cite[Chapter 9]{MingoSpeicherBook} and \cite{DVV-general} for example.

\section{The subordinations and Haagerup--Schultz's approach}
\label{section:sub-HS}
Let $T\in\tilde\CM$ be a $R$-diagonal operator. The goal of this section is to understand the distribution of $T-\lambda \unit$ and some technical functions introduced in \cite{HaagerupSchultz2007}, but from the subordination function perspective. Let $U\in\CM$ be a Haar unitary which is $\ast$--free from $T$. 
Then $U^*$ is a Haar unitary which is also $\ast$--free from $T$. 
Consequently, the operator $U^*T$ is $R$-diagonal and has the same $\ast$-distribution as $T$ \cite{NicaSpeicher-Rdiag}. In addition, $-\lambda U$ has the same distribution as $|\lambda| U$.  Hence, for $\lambda\in \mathbb{C}$, we have the following identification of distributions
\begin{equation}\label{dist: T-lambda-same-dist}
  |T-\lambda \unit| \underset{\CD}{\sim} |U^*T-\lambda \unit| 
  \underset{\CD}{\sim} |T-\lambda U|\underset{\CD}{\sim} |T+|\lambda| U |. 
\end{equation}

Throughout this section,   $\mu_{|T|}$ denotes the spectral measure of the positive operator $|T|$ and $\mu=\tilde{\mu}_{|T|}$ is its symmetrization, if we do not declare assumption separately. 
By Proposition \ref{Rdiag-additive} and \eqref{dist: T-lambda-same-dist}, it follows that
\[
\tilde{\mu}_{|T-\lambda\unit|} = \tilde{\mu}_{|T|}\boxplus
\tilde{\mu}_{|\lambda|\unit}= \mu\boxplus \nu,
\]
where $\nu= \frac12 (\delta_{-|\lambda|}+\delta_{|\lambda|})$. We then observe that for any operator $x\in \mathcal{M}$ and $\varepsilon>0$, we have
\[
   \frac{1}{2}\frac{d}{d{t}}\big(\log\Delta(x^*x+t^2\unit)\big)=t\tau((x^*x+t^2\unit)^{-1}).
\]
The above equation is connected with Cauchy transform of $\tilde{\mu}_{|x|}$ by
\begin{equation}
\label{eqn:G_tilde_general}
   G_{\tilde{\mu}_{|x|}}(it)=\int_\mathbb{R} \frac{1}{it-u}d\tilde{\mu}_{|x|} (u)=\int_\mathbb{R}\frac{- it}{t^2+u^2}d\tilde{\mu}_{|x|} (u)=-it\tau((x^*x+t^2\unit)^{-1}),
\end{equation}
where we used the symmetric property of $\tilde{\mu}_{|x|}$. 

\begin{definition}\label{defn-h} Let $T\in\tilde\CM$ be an $R$--diagonal operator, and
	define
	\[
	h(s)= s\,\tau\big((T\cc T+s^2\unit)^{-1}\big), \qquad s>0.
	\]
	Moreover, for $\lambda\in\C\setminus\{0\}$, set
	\[
	h_\lambda(t) = t\,\tau\big([(T-\lambda\unit)\cc
	(T-\lambda\unit)+t^2\unit]^{-1}\big), \qquad t>0.
	\]
\end{definition}

Let $\omega_1, \omega_2$ be subordination functions such that $ F_{\tilde{\mu}_{|T-\lambda\unit|}}(z)= F_{\tilde{\mu}_{|T|}}(\omega_1(z))=
F_{\nu}(\omega_2(z))$. That is
\begin{equation}\label{eqn-subordination-F}
  F_{\mu\boxplus\nu}(z)=F_\mu(\omega_1(z))=F_\nu(\omega_2(z)).
\end{equation}
Note that $\omega_1, \omega_2$ are functions of $z$ that depend on $T$ and $\vert\lambda\vert$. 
As noted above, by symmetrization, the Cauchy transforms satisfy 
\begin{equation}\label{eqn:G_tilde_h_lambda}
  G_{\tilde{\mu}_{|T-\lambda\unit|}}(it)=-ih_\lambda(t), \qquad   G_{\tilde{\mu}_{|T|}}(is)=-ih(s)
\end{equation}
and the derivative of logarithm of Fuglede--Kadison determinants can be expressed as
\begin{equation}
 \label{eqn:derivative-log-det}
	\begin{aligned}
	 \frac{1}{2}\frac{d}{d{t}}\big(\log\Delta((T-\lambda\unit)\cc
	 (T-\lambda\unit)+t^2\unit)\big)&=h_\lambda(t);\\
	\frac{1}{2}\frac{d}{d{s}}\big(\log\Delta(T\cc T+s^2\unit)\big)&=h(s).
	\end{aligned}
\end{equation}

The subordination function builds a connection between $h_\lambda(t)$ and $h(s)$. 
\begin{proposition}
 \label{prop:3.2-def-W}
	For $\lambda\in \mathbb{C}\backslash\{0\}$ and $t>0$, the complex numbers $\omega_1(it), \omega_2(it)$ are pure imaginary. Set 
	$\omega_1(it)=iW(t)$ for some $W(t)>t$.
	We have 
	\begin{equation}
	  \label{eqn:sub-scalar-lambda-W}
	h_\lambda(t)=h(-i\omega_1(it))=h(W(t)).
	\end{equation}
\end{proposition}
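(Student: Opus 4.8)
The plan is to exploit the symmetry of both measures $\mu=\tilde\mu_{|T|}$ and $\nu=\frac12(\delta_{-|\lambda|}+\delta_{|\lambda|})$ about the origin, together with the Belinschi--Bercovici fixed-point characterization of the subordination functions. The first thing I would record is that for any symmetric probability measure $\sigma$ on $\R$ the substitution $x\mapsto -x$ in the defining integral gives $G_\sigma(-\bar z)=-\overline{G_\sigma(z)}$, and hence $F_\sigma(-\bar z)=-\overline{F_\sigma(z)}$ and $H_\sigma(-\bar z)=-\overline{H_\sigma(z)}$, where $H_\sigma(z)=F_\sigma(z)-z$. Evaluating at a purely imaginary point $z=is$, for which $-\bar z=z$, forces $G_\sigma(is)$, $F_\sigma(is)$ and $H_\sigma(is)$ to be purely imaginary. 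In short, the imaginary axis is invariant under every transform attached to a symmetric measure, and this is the structural fact driving the whole argument.

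To prove that $\omega_1(it)$ is pure imaginary I would use the fixed-point equation $\omega_1(z)=z+H_\nu(z+H_\mu(\omega_1(z)))$ and the fact that $\omega_1(z)$ is the attracting fixed point of $\Phi(w)=z+H_\nu(z+H_\mu(w))$, realized as the limit of the iterates $\Phi^{\circ n}(w_0)$ for $w_0\in\C^+$. Fixing $z=it$ and taking $w=is\in\C^+$, the previous paragraph shows $H_\mu(is)$ is pure imaginary, hence $z+H_\mu(is)$ is pure imaginary, hence $H_\nu$ of it is pure imaginary, hence $\Phi(is)=it+H_\nu(\cdots)$ is pure imaginary. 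Thus $\Phi$ preserves the half-line $\{is:s>0\}=\C^+\cap i\R$, which is closed in $\C^+$; starting the iteration at $w_0=it$ and passing to the limit yields $\omega_1(it)=iW(t)$ with $W(t)>0$. The companion relation $\omega_2(z)=z+H_\mu(\omega_1(z))$ then makes $\omega_2(it)$ pure imaginary as well. For the strict inequality $W(t)>t$ I would invoke $\omega_1(z)=z+H_\nu(\omega_2(z))$ and the standard fact that $\im H_\nu(w)>0$ on $\C^+$ whenever $\nu$ is not a single point mass; since $\lambda\neq 0$ the Bernoulli measure $\nu$ is non-degenerate and $\omega_2(it)\in\C^+$, so $W(t)=\im\omega_1(it)=t+\im H_\nu(\omega_2(it))>t$.

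The identity \eqref{eqn:sub-scalar-lambda-W} then falls out by unwinding definitions. Taking reciprocals in $F_{\mu\boxplus\nu}(z)=F_\mu(\omega_1(z))$ gives $G_{\mu\boxplus\nu}(z)=G_\mu(\omega_1(z))$. Evaluating at $z=it$, using $\tilde\mu_{|T-\lambda\unit|}=\mu\boxplus\nu$ and the formulas \eqref{eqn:G_tilde_h_lambda}, and substituting $\omega_1(it)=iW(t)$, I obtain $-ih_\lambda(t)=G_{\mu\boxplus\nu}(it)=G_\mu(iW(t))=G_{\tilde\mu_{|T|}}(iW(t))=-ih(W(t))$, whence $h_\lambda(t)=h(W(t))=h(-i\omega_1(it))$.

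The main obstacle is the first assertion, namely that $\omega_1(it)$ is genuinely pure imaginary. The abstract symmetry $G_\sigma(-\bar z)=-\overline{G_\sigma(z)}$ by itself only tells us the transforms are well-behaved on the axis; to transfer this to the subordination functions one really needs that the axis is preserved along the convergent iteration and that the limit cannot escape it. This is why I would base the argument on the explicit invariance of the closed half-line $\C^+\cap i\R$ under $\Phi$, which controls the limit cleanly, rather than on a direct symmetry manipulation of the fixed-point equation.
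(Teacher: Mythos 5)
Your proof is correct, but the mechanism you use for the key claim---that $\omega_1(it)$ is purely imaginary---is genuinely different from the paper's. The paper writes $\omega_1 = F_{\tilde\mu_{|T|}}^{\langle -1\rangle}\circ F_{\tilde\mu_{|T-\lambda\unit|}}$ on a truncated Stolz angle, observes via \eqref{eqn:G_tilde_h_lambda} that both $F$-transforms preserve the imaginary axis there, and then propagates the conclusion to all $t>0$ by analytic continuation (the real part of $t\mapsto\omega_1(it)$ is real-analytic and vanishes for $t$ large). You instead run the Belinschi--Bercovici iteration $\Phi^{\circ n}(w_0)\to\omega_1(it)$ and check, using the symmetry identity $G_\sigma(-\bar z)=-\overline{G_\sigma(z)}$, that $\Phi$ preserves the closed half-line $i[t,\infty)$. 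Your route avoids both the local invertibility of $F_\mu$ and the continuation step, and it hands you $W(t)\ge t$ for free since all iterates stay in $i[t,\infty)$. For the strict inequality the paper invokes the representation $\omega_1=F_\sigma$ for some probability measure $\sigma$ together with the Nevanlinna representation to get $\Im\omega_1(z)>\Im z$; you get it from $\omega_1(z)=z+H_\nu(\omega_2(z))$ and $\Im H_\nu>0$ on $\C^+$ for the non-degenerate Bernoulli measure $\nu$---equivalent in substance, and in fact completely explicit here since $H_\nu(z)=-|\lambda|^2/z$ gives $\Im H_\nu(i\delta)=|\lambda|^2/\delta>0$. The final derivation of $h_\lambda(t)=h(W(t))$ from the subordination of Cauchy transforms is the same in both arguments.
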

\begin{proof}
	In a domain of the form $\Gamma_{\alpha,\beta}=\{ z=x+iy: y>\beta, |x/y|<\alpha \}$ for some $\alpha, \beta>0$ (see \cite{BercoviciVoiculescu1993}), we have 
	\[
	\omega_1(z)=F^{\langle -1\rangle}_{\tilde{\mu}_{|T|}}\left (F_{\tilde{\mu}_{|T-\lambda\unit|}}(z) \right), 
	\qquad z\in\Gamma_{\alpha,\beta}.
	\]
	Hence, by \eqref{eqn:G_tilde_h_lambda}, $\omega_1(it)$ is purely imaginary for $t>\beta$ and therefore $\omega_1(it)$ is purely imaginary for all $t>0$ by analytic extension. The same argument works for $\omega_2$. 
	 Note that we can express subordination function as $\omega_1(z)=F_\sigma(z)$ for some probability measure $\sigma$ on $\mathbb{R}$ (see \cite[Definition 2.1]{Len2007jfa} for example). Hence, by the asymptotic of $F$-transform (see \cite[Proposition 5.2]{BercoviciVoiculescu1993} for example), 
	we have $\Im\omega_1(z)>\Im (z)$.
	Set $W(t)=-i\omega_1(it)$. We have $W(t)=\Im \omega_1(it)>\Im (it)=t$. Finally, 
	\[
	h_\lambda(t)=-\Im G_{\tilde{\mu}_{|T-\lambda\unit|}}(it)
	=-\Im G_{\tilde{\mu}_{|T|}} (\omega_1(it))=h(W(t)),
	\]
	which is exactly the subordination relation \eqref{eqn:sub-scalar-lambda-W}. 
\end{proof}
\begin{definition}\label{defn:s-lambda-t}
	For $\lambda\in \mathbb{C}\backslash\{0\}$ and $t>0$, set 
	\[
	s(|\lambda|,t)=W(t)=\Im\omega_1(it). 
	\]
\end{definition}

We now outline the calculation of the Brown measure for $T$ as four steps:
\begin{enumerate}[(1)]
	\item The subordination function $s(|\lambda|,t)$ depends on $\lambda$. Study the analytic property of $\lim_{t\rightarrow 0} s(|\lambda|,t)$ as a function of $\lambda$. This is equivalent to studying the boundary value of the subordination function $\omega_1(it)$ as $t$ goes to zero. 
		\item Express $\Delta(|T-\lambda\unit|^2+t^2\unit)$ in terms of $\Delta(|T|^2+s^2\unit)$ where $s$ is some subordination function. To do so, we need to evaluate the following integrals
	\[
	\int_{t_0}^t h_\lambda(t)dt, \qquad\text{and} \qquad\int_{s_0}^s h(s)ds.
	\]
	The key in this step is to find a tractable formula for subordination functions. 
	\item  Find a formula for $\Delta(T-\lambda\unit)^2=\lim_{t\rightarrow 0}\Delta(|T-\lambda\unit|^2+t^2)$.
	\item Finally, calculate the Brown measure of $T$. 
\end{enumerate}

Following \cite{HaagerupSchultz2007}, 
recall $\mu=\tilde{\mu}_{|T|}$, the function $h$ in Definition \ref{defn-h} reads 
\begin{equation}\label{defn:h-s}
h(s)=\int_\mathbb{R} \frac{s}{s^2+u^2}d\mu(u),\quad s>0. 
\end{equation}
Note that the above definition is the same as the function in \cite[Lemma 4.6]{HaagerupSchultz2007} because 
\[
  \int_\mathbb{R} \frac{s}{s^2+u^2}d\mu(u)=\int_0^\infty \frac{s}{s^2+u^2}d\mu_{|T|}(u).
\]
 Then put
  \begin{equation}
    k(s,t)=(s-t)\bigg(\frac{1}{h(s)}-s+t\bigg), \qquad s>0,\, t\in\R.
  \end{equation}
In particular, we have
\[
   k(s,0)=s\bigg(\frac{1}{h(s)}-s \bigg)=\frac{n(s)}{d(s)}
\]
where 
\[
  n(s)=\int_\mathbb{R}\frac{u^2}{s^2+u^2}d\mu(u) \qquad\text{and}\qquad
   d(s)=\int_\mathbb{R}\frac{1}{s^2+u^2}d\mu(u).
\]

 Recall that $\lambda_1(\mu), \lambda_2(\mu)$ were defined in Notation \ref{notation:lambda-mu-1-2}. 
\begin{lemma}\cite[Lemma 4.8]{HaagerupSchultz2007}
	\label{lemma:monotonicity-k-s-0}
The map $s\mapsto k(s,0)$ is a strictly increasing bijection of $(0,\infty)$
onto $(\lambda_1(\mu)^2,\lambda_2(\mu)^2)$. 
\end{lemma}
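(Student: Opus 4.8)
The plan is to establish the three ingredients of a \emph{continuous strictly increasing bijection onto an open interval}: strict monotonicity, continuity, and the identification of the two one-sided limits with $\lambda_1(\mu)^2$ and $\lambda_2(\mu)^2$. First I would rewrite the map in the form most convenient for differentiation. Using $h(s)=s\,d(s)$ together with $\int d\mu=1$, one has
\[
k(s,0)=s\Big(\frac{1}{h(s)}-s\Big)=\frac{1}{d(s)}-s^2=\frac{n(s)}{d(s)}.
\]

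For monotonicity I would differentiate $1/d(s)-s^2$ in $s$; differentiation under the integral sign is justified on each compact subinterval of $(0,\infty)$, since the integrand $f_s(u):=(s^2+u^2)^{-1}$ and its $s$-derivative are bounded there. A short computation gives
\[
\frac{d}{ds}k(s,0)=\frac{2s}{d(s)^2}\left(\int f_s^2\,d\mu-\Big(\int f_s\,d\mu\Big)^2\right)=\frac{2s}{d(s)^2}\,\mathrm{Var}_\mu(f_s).
\]
Thus monotonicity is reduced to the (Cauchy--Schwarz / Jensen) fact that the variance of $f_s$ under the probability measure $\mu$ is nonnegative, and \emph{strictly} positive precisely when $f_s$ is non-constant on $\mathrm{supp}(\mu)$. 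Since $f_s$ depends on $u$ only through $u^2$ and $\mu=\tilde\mu_{|T|}$ is symmetric, $f_s$ is constant on $\mathrm{supp}(\mu)$ if and only if $\mu_{|T|}$ is a single point mass; excluding this degenerate case (which is exactly the standing hypothesis guaranteeing $\lambda_1(\mu)<\lambda_2(\mu)$) yields $\frac{d}{ds}k(s,0)>0$ for all $s>0$, hence strict monotonicity.

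It remains to compute the boundary values. As $s\to\infty$ I would pass to $s^2 d(s)=\int s^2(s^2+u^2)^{-1}\,d\mu\to1$ (dominated convergence) and $s^2 n(s)=\int u^2(1+u^2/s^2)^{-1}\,d\mu\to\int u^2\,d\mu=\lambda_2(\mu)^2$ (monotone convergence), so that $k(s,0)=s^2 n(s)/\big(s^2 d(s)\big)\to\lambda_2(\mu)^2$, with the value $+\infty$ correctly recovered when $\int u^2\,d\mu=\infty$. As $s\to0^+$, monotone convergence gives $d(s)\uparrow\int u^{-2}\,d\mu$, whence $1/d(s)\to(\int u^{-2}\,d\mu)^{-1}=\lambda_1(\mu)^2$ and $k(s,0)\to\lambda_1(\mu)^2$; this also handles the cases $\mu(\{0\})>0$ or $\int u^{-2}\,d\mu=\infty$, both yielding the limit $0=\lambda_1(\mu)^2$. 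Continuity of $k(\cdot,0)$ on $(0,\infty)$ follows from the same convergence arguments, and together with strict monotonicity and the two limits it shows that $s\mapsto k(s,0)$ is a continuous strictly increasing bijection of $(0,\infty)$ onto $(\lambda_1(\mu)^2,\lambda_2(\mu)^2)$.

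The only genuinely delicate point is the strictness in the variance inequality and its interplay with the non-degeneracy hypothesis: I expect the main obstacle to be making explicit that the lemma requires $\mu_{|T|}$ not to be a point mass (equivalently $\lambda_1(\mu)<\lambda_2(\mu)$), since otherwise $f_s$ is constant on the support, $k(\cdot,0)$ is itself constant, and the target interval degenerates to a point. Everything else is dominated/monotone convergence bookkeeping, with mild care needed to allow $\lambda_1(\mu)=0$ and $\lambda_2(\mu)=\infty$ as limiting endpoints.
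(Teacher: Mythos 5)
Your proposal is correct and follows essentially the same route as the paper: both write $k(s,0)=\frac{1}{d(s)}-s^{2}$, obtain strict monotonicity from the Cauchy--Schwarz (variance) inequality applied to $f_s(u)=(s^{2}+u^{2})^{-1}$ under the non-degeneracy hypothesis on $\mu$, and identify the endpoints via the monotone/dominated convergence limits $d(s)\uparrow\int u^{-2}\,d\mu$ as $s\to 0^{+}$ and $k(s,0)\to\int u^{2}\,d\mu$ as $s\to\infty$. Your explicit discussion of when the variance vanishes and of the degenerate endpoint cases is a slightly more careful write-up of the paper's one-line remark that the Cauchy--Schwarz ratio exceeds one because $\mu$ is non-trivial.
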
  
\begin{proof}
We have $k(s,0)=\frac{s}{h(s)}-s^2$. One can verify that
\[
   \frac{d}{ds}\frac{s}{h(s)}=\frac{d}{ds}\frac{1}{\int_\mathbb{R}\frac{1}{s^2+u^2}d\mu(u)}
     =2s\cdot\frac{\int_\mathbb{R}\frac{1}{(s^2+u^2)^2}d\mu(u)}{\left(\int_\mathbb{R}\frac{1}{s^2+u^2}d\mu(u)\right)^2}.
\]
Since $\mu$ is non-trivial, the second factor in the right hand side of the above equation is strictly bigger than one by Cauchy-Schwartz inequality. Hence, $\frac{d}{ds}k(s,0)>0$. We can verify directly
\[
  \lim_{s\rightarrow 0^+} k(s,0)=\frac{1}{\int_\mathbb{R}\frac{1}{u^2}d\mu(u)}
  \qquad\text{and}\qquad \lim_{s\rightarrow \infty} k(s,0)=\int_\mathbb{R}u^2 d\mu(u).
\]
The result then follows. 
\end{proof}

    \begin{lemma}\label{lemma:monotoniciity-k-s-t}
    \cite[Lemma 4.8]{HaagerupSchultz2007}
  The function $k$ is an analytic on $(0,\infty)\times\R$. Moreover,
  for $t>0$ the map $s\mapsto k(s,t)$ is a strictly increasing
  bijection of $(t,\infty)$ onto $(0,\infty)$
  \end{lemma}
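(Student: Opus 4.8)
\emph{Analyticity.} Set $d(s)=\int_\R (s^2+u^2)^{-1}\,d\mu(u)$, so that $h(s)=s\,d(s)$. On $(0,\infty)$ the integral $d(s)$ converges, is strictly positive, and is real-analytic in $s$ (one may differentiate under the integral sign, the integrands being analytic and dominated for $s$ in a compact subset of $(0,\infty)$). Hence $1/h$ is real-analytic on $(0,\infty)$, and since $k(s,t)=(s-t)\big(\tfrac{1}{h(s)}-s+t\big)$ is a polynomial in $s$, $t$ and $1/h(s)$, it is analytic on $(0,\infty)\times\R$.

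\emph{Boundary values.} Fix $t>0$ and consider $k(\cdot,t)$ on $(t,\infty)$. Because $h(t)>0$, the factor $\tfrac1{h(s)}-s+t$ remains bounded as $s\to t^+$, so $k(t^+,t)=0$. For the behaviour at infinity I would use that $\tfrac1{h(s)}-s=\tfrac{n(s)}{h(s)}\ge0$, which follows from $1-s^2d(s)=n(s)\ge0$; then $k(s,t)\ge(s-t)\,t\to\infty$. Thus, granting strict monotonicity, $k(\cdot,t)$ will be a continuous strictly increasing bijection of $(t,\infty)$ onto $(0,\infty)$.

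\emph{Monotonicity.} This is the only substantial point. Writing $\phi(s)=\tfrac1{h(s)}-s$, so that $k(s,t)=(s-t)\big(\phi(s)+t\big)$ and $k(s,0)=s\,\phi(s)$, differentiation gives
\[
\partial_s k(s,t)=\big(\phi(s)+s\phi'(s)\big)+\big(1-\phi'(s)\big)\,t=\frac{d}{ds}k(s,0)+\big(1-\phi'(s)\big)\,t .
\]
The crucial feature is that, with $s$ fixed, this is \emph{affine} in $t$; hence on the admissible range $0<t<s$ it suffices to verify positivity at the two endpoints $t=0$ and $t=s$. At $t=0$ the value is $\tfrac{d}{ds}k(s,0)>0$ by Lemma \ref{lemma:monotonicity-k-s-0}. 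At $t=s$ the expression telescopes: $\big(\phi(s)+s\phi'(s)\big)+s\big(1-\phi'(s)\big)=\phi(s)+s=\tfrac1{h(s)}>0$. An affine function of $t$ that is positive at both endpoints of $[0,s]$ is positive throughout, so $\partial_s k(s,t)>0$ for every $0<t<s$, and therefore $s\mapsto k(s,t)$ is strictly increasing on $(t,\infty)$.

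\emph{Conclusion and main obstacle.} The three steps combine to give the lemma: a continuous, strictly increasing function on $(t,\infty)$ with limits $0$ and $+\infty$ is a bijection onto $(0,\infty)$. The genuinely nontrivial ingredient is the monotonicity, and what makes it painless is the reduction, through the affine dependence on $t$, to the two endpoint evaluations — one being exactly Lemma \ref{lemma:monotonicity-k-s-0} and the other collapsing to $1/h(s)>0$. I expect the obstacle to lie in spotting this structure rather than in any hard estimate: a frontal attempt to sign $\partial_s k$ in general (via $\int(s^2+u^2)^{-2}\,d\mu$ and a Cauchy--Schwarz control of $\phi'$) is substantially messier and does not seem to lead anywhere cleaner.
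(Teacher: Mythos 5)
Your proof is correct. The paper takes a slightly different and even shorter route for the monotonicity: instead of differentiating, it rewrites
\[
k(s,t)=k(s,0)\Bigl(1-\frac{t}{s}\Bigr)+(s-t)t
\]
and observes that for fixed $t>0$ each summand is strictly increasing in $s$ on $(t,\infty)$, since $k(s,0)$ is positive and strictly increasing by Lemma \ref{lemma:monotonicity-k-s-0} and $1-t/s$ is positive and increasing. Your argument — computing $\partial_s k(s,t)$, noting it is affine in $t$, and checking positivity at the endpoints $t=0$ (which is exactly Lemma \ref{lemma:monotonicity-k-s-0}) and $t=s$ (where it collapses to $1/h(s)>0$) — is a valid alternative that leans on the same key input but packages it as a derivative estimate rather than an algebraic decomposition; the endpoint-checking trick is a nice observation, though the paper's decomposition avoids differentiation altogether. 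Your treatment of the boundary values ($k(t^+,t)=0$ via boundedness of $1/h(s)-s+t$ near $s=t$, and $k(s,t)\ge(s-t)t\to\infty$ via $1/h(s)-s=n(s)/h(s)\ge 0$) is also correct and slightly more explicit than the paper's one-line remark that $k(s,t)\to\infty$.
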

\begin{proof}
The function $k(s,t)$ can be expressed as
\[
  k(s,t)=\bigg( 1-\frac{t}{s} \bigg)s\bigg(\frac{1}{h(s)}-s \bigg)+(s-t)t=k(s,0)\bigg( 1-\frac{t}{s} \bigg)+(s-t)t.
\]
It follows from Lemma \ref{lemma:monotonicity-k-s-0} that $s\mapsto k(s,t)$ is strictly increasing for $s>t$. One can check directly that $k(s,t)\rightarrow \infty$ as $s\rightarrow \infty$ for any $t>0$ fixed. 
\end{proof}
The following result shows that our definition for $s(|\lambda|,t)=W(t)$ coincides with the function $s(|\lambda|,t)$ in \cite[Definition 4.9]{HaagerupSchultz2007}.   In \cite{HaagerupSchultz2007}, Haagerup--Schultz studied various useful properties of the function $s$. 
\begin{proposition}\label{prop:equivalence-two-eqns}
The analytic function $\omega_1$ is the unique solution of the fixed point equation for $\omega\in\mathbb{C}^+$:
\begin{equation}\label{eqn:fixed-point}
  \omega(z)=z+H_2(z+H_1(\omega(z))),  \qquad z\in\mathbb{C}^+
\end{equation}
where 
\[
  H_1(z)=F_{{\mu}}(z)-z, \quad \text{and} \quad H_2(z)=F_{\nu}(z)-z.
\]
For $r>0$, the function $s(r, t)=W(t)=\Im \omega_1(it)$ is the unique solution $s\in (t,\infty)$ to the equation $k(s,t)=r^2$, which is
\begin{equation}\label{eqn:fixed-point-s-t}
  (s-t)^2-\frac{s-t}{h(s)}+r^2=0.
\end{equation}
\end{proposition}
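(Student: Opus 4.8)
The first assertion requires essentially no new work: it is a direct application of the Belinschi--Bercovici fixed point characterization of subordination functions recalled above from \cite{BB2007new}. Taking $\mu_1=\mu=\tilde\mu_{|T|}$ and $\mu_2=\nu$ in their fixed point equation, the subordination function $\omega_1$ is precisely the unique $\mathbb{C}^+$-valued analytic solution of \eqref{eqn:fixed-point}, so it only remains to match notation.

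For the second assertion, the plan is to specialize \eqref{eqn:fixed-point} to the imaginary axis $z=it$ and use the symmetry of both $\mu$ and $\nu$ to collapse the equation to a scalar identity in $s$. First I would compute $H_2$ explicitly: since $\nu=\frac12(\delta_{-r}+\delta_r)$ one finds $G_\nu(z)=z/(z^2-r^2)$, hence $F_\nu(z)=z-r^2/z$ and $H_2(z)=-r^2/z$. Next, writing $\omega_1(it)=is$ with $s=W(t)$ purely imaginary (Proposition \ref{prop:3.2-def-W}) and using $G_{\tilde\mu_{|T|}}(is)=-ih(s)$ from \eqref{eqn:G_tilde_h_lambda}, I would evaluate $H_1(\omega_1(it))=H_1(is)=i\big(1/h(s)-s\big)$.

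Substituting into \eqref{eqn:fixed-point} at $z=it$, the inner argument becomes $it+H_1(\omega_1(it))=i\big(t+1/h(s)-s\big)$, a purely imaginary point that lies in $\mathbb{C}^+$ because $1/h(s)-s>0$ (indeed $sh(s)=\int s^2/(s^2+u^2)\,d\mu(u)<1$). Applying $H_2$ to this point and comparing imaginary parts, the fixed point equation reduces to $(s-t)\big(1/h(s)-s+t\big)=r^2$, which is exactly $k(s,t)=r^2$, equivalently \eqref{eqn:fixed-point-s-t}. Uniqueness of the solution $s\in(t,\infty)$ is then immediate from Lemma \ref{lemma:monotoniciity-k-s-t}, since $s\mapsto k(s,t)$ is a strictly increasing bijection of $(t,\infty)$ onto $(0,\infty)$.

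I expect the only point needing care to be bookkeeping rather than anything conceptual: one must confirm that the intermediate argument $i(t+1/h(s)-s)$ genuinely lies in the domain $\mathbb{C}^+$ on which $H_2$ and the fixed point equation are valid, i.e. verify the positivity $t+1/h(s)-s>0$, so that the reduction to the imaginary axis is legitimate. Once the symmetry of $\mu$ and $\nu$ is used to keep every quantity on the imaginary axis, the remaining algebra is routine.
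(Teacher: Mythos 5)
Your proposal is correct and follows essentially the same route as the paper: compute $H_2(z)=-r^2/z$ from $G_\nu$, substitute into the Belinschi--Bercovici fixed point equation, restrict to $z=it$ using the fact that $\omega_1(it)$ is purely imaginary, and reduce to $k(s,t)=r^2$, with uniqueness supplied by the monotonicity of $s\mapsto k(s,t)$ (Lemma \ref{lemma:monotoniciity-k-s-t}), exactly as in the paper. The only cosmetic difference is that the paper first derives the general identity $-(\omega(z)-z)^2+(\omega(z)-z)/G_\mu(\omega(z))+|\lambda|^2=0$ for all $z\in\mathbb{C}^+$ before specializing to the imaginary axis, whereas you specialize immediately; your extra check that $t+1/h(s)-s>0$ is a harmless (and correct) refinement.
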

\begin{proof}
It is known \cite{BB2007new} that the subordination function \eqref{eqn-subordination-F} satisfies the  fixed point equation \eqref{eqn:fixed-point}. 
We next show that $s(r,t)$ is the unique solution to the equation \eqref{eqn:fixed-point-s-t}.
Given $\lambda\in\mathbb{C}\backslash\{0\}$, recall that $\nu=\frac{1}{2}(\delta_{|\lambda|}+\delta_{-|\lambda|})$.
We calculate that 
$ G_\nu(z)=\frac{1}{2}\left( \frac{1}{z-|\lambda|}+\frac{1}{z+|\lambda|} \right)$ and hence
$H_2(z)=F_\nu(z)-z=\frac{-|\lambda|^2}{z}.$
The fixed point equation \eqref{eqn:fixed-point} is expressed as
\[
  \omega(z)=z-\frac{|\lambda|^2}{z+H_1(\omega(z))}
\]
which is equivalent to 
\begin{equation}\label{eqn:fixed-point-omega-z}
- (\omega(z)-z)^2+\frac{\omega(z)-z}{G_{{\mu}}(\omega(z))}+|\lambda|^2=0.
\end{equation}
When $z=it$, $\omega_1(it)=is(|\lambda|, t)$, using $F_{{\mu}}(is)=1/G_{{\mu}}(is)=i/h(s)$, the above equation is further reduced to \eqref{eqn:fixed-point-s-t}
\[
(s-t)^2-\frac{s-t}{h(s)}+|\lambda|^2=0.
\]
The uniqueness of the fixed point equation \eqref{eqn:fixed-point} implies that above equation has a unique solution. The uniqueness also follows from 
Lemma \ref{lemma:monotoniciity-k-s-t}.
\end{proof}

\begin{remark}
For $\lambda\in\mathbb{C}\backslash\{0\}$, and set
\[
  K(\omega, z)=(\omega-z)\left( \omega-z-\frac{1}{G_{{\mu}}(\omega)} \right),
  \quad  z, \omega\in\mathbb{C}^+.
\]
For any $z\in\mathbb{C}^+$, the equivalence between \eqref{eqn:fixed-point-omega-z} and the fixed point equation \eqref{eqn:fixed-point} as shown in the proof of Proposition \ref{prop:equivalence-two-eqns} implies that there is a unique solution $\omega\in \mathbb{C}^+$ such that $K(\omega,z)=|\lambda|^2$. If $z=it$, then the solution $\omega=i s$ for some $s>t$. Moreover, $K(is, it)=k(s,t)$.
\end{remark}

  
  \begin{definition}\label{def-for-s-t} 
  For $r\in (\lambda_1(\mu), \lambda_2(\mu))$, let $s_0$ denote the unique solution $s\in (0,\infty)$ to the equation $k(s,0)=r^2$ as in Lemma \ref{lemma:monotoniciity-k-s-t}.
For $r>0$, define $s(r,0)$ as follows.
\begin{equation}
\label{defn:s-lambda-0}
s(r, 0)=
  \begin{cases}
   0, \quad \text{if} \quad 0<r\leq \lambda_1(\mu);\\
       s_0, \quad\text{if}\quad r\in (\lambda_1(\mu),\lambda_2(\mu));\\
       +\infty,\quad \text{if}, \quad r \geq \lambda_2(\mu).
  \end{cases}
\end{equation}
  \end{definition}
  
\begin{proposition}\label{prop:s(r,0)-increasing}
The function $r\mapsto s(r,0)$ is increasing on $[0,\infty)$. It is strictly increasing on $(\lambda_1(\mu),\lambda_2(\mu))$ and a bijection of $(\lambda_1(\mu),\lambda_2(\mu))$ onto $(0,\infty)$.
\end{proposition}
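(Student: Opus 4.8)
The plan is to deduce this essentially as a corollary of Lemma \ref{lemma:monotonicity-k-s-0}, which already establishes that $s\mapsto k(s,0)$ is a strictly increasing bijection of $(0,\infty)$ onto $(\lambda_1(\mu)^2,\lambda_2(\mu)^2)$. Indeed, by Definition \ref{def-for-s-t} the value $s(r,0)=s_0$ on the middle regime is defined precisely as the solution of $k(s,0)=r^2$, so the proposition is a statement about the inverse of $k(\cdot,0)$ precomposed with the squaring map.

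First I would treat the middle regime $r\in(\lambda_1(\mu),\lambda_2(\mu))$. By Lemma \ref{lemma:monotonicity-k-s-0} the map $k(\cdot,0)$ admits a strictly increasing inverse $(k(\cdot,0))^{-1}\colon(\lambda_1(\mu)^2,\lambda_2(\mu)^2)\to(0,\infty)$. Since $r\mapsto r^2$ is a strictly increasing bijection of $(\lambda_1(\mu),\lambda_2(\mu))$ onto $(\lambda_1(\mu)^2,\lambda_2(\mu)^2)$, the composition $r\mapsto s(r,0)=(k(\cdot,0))^{-1}(r^2)$ is a strictly increasing bijection of $(\lambda_1(\mu),\lambda_2(\mu))$ onto $(0,\infty)$. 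This already gives the second assertion of the proposition.

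Next I would glue the three pieces to obtain monotonicity on all of $[0,\infty)$. On $[0,\lambda_1(\mu)]$ the function is identically $0$, and on $[\lambda_2(\mu),\infty)$ it is identically $+\infty$; since the middle piece takes values in $(0,\infty)$, the only thing left is to verify that no order is violated at the two junctions, i.e. that the one-sided limits match. Here I invoke the endpoint computations recorded in the proof of Lemma \ref{lemma:monotonicity-k-s-0}, namely $\lim_{s\to 0^+}k(s,0)=\lambda_1(\mu)^2$ and $\lim_{s\to\infty}k(s,0)=\lambda_2(\mu)^2$. Inverting these limits yields $\lim_{r\to\lambda_1(\mu)^+}s(r,0)=0$ and $\lim_{r\to\lambda_2(\mu)^-}s(r,0)=+\infty$, which agree with the constant values of the two outer regimes. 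Hence $s(r,0)$ attaches continuously (in $[0,\infty]$) to the outer pieces at both junctions and is therefore increasing across the whole half-line.

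I do not expect a genuine obstacle here: the proposition is a formal consequence of Lemma \ref{lemma:monotonicity-k-s-0}, and the only care required is the bookkeeping at the endpoints needed to confirm that the constant outer pieces attach without breaking monotonicity. The one point worth stating explicitly is that $r\mapsto r^2$ preserves strict monotonicity on $(0,\infty)$, so squaring does not disturb the monotone structure inherited from the inverse of $k(\cdot,0)$.
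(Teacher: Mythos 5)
Your proposal is correct and follows essentially the same route as the paper, whose proof is a one-line appeal to Lemma \ref{lemma:monotonicity-k-s-0} together with the definition of $s(r,0)$ as the unique solution of $k(s,0)=r^2$. The extra endpoint bookkeeping you supply (matching the one-sided limits to the constant outer pieces) is a harmless elaboration of what the paper leaves implicit.
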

\begin{proof}
It follows from the fact that the map $s\mapsto k(s,0)$ is a strictly increasing bijection of $(0,\infty)$ onto $(\lambda_1(\mu)^2,\lambda_2(\mu)^2))$ and the definition that $s(r,0)$ is the unique solution to the equation $k(s,0)=r^2$ when $\lambda_1(\mu)<r<\lambda_2(\mu)$.
\end{proof}

It is known \cite{Belinschi2008} that the subordination functions $\omega_1, \omega_2$ extend continuously to $\mathbb{R}$ with values in the extended complex plane $\overline{\mathbb{C}}$.  The following result clarifies that the extension of $\omega_1, \omega_2$ at $0$ is a nice function of $\lambda$. 
\begin{lemma}\label{lemma-limit-s-0} 
The function $(r,t)\mapsto s(r,t)$ is analytic in $(0,\infty)\times (0,\infty)$. 
We have $s(r, t)> t$. 
Moreover, 
\begin{equation}\label{eqn:limit_omega1}
\lim_{t\rightarrow 0^+}s(r, t)=s(r,0),  
\quad r>0.
\end{equation}
In addition,
we have $\lim_{t\rightarrow\infty}(s(r,t)-t)=0$ for all $r>0$.

When $0<r\leq \lambda_1(\mu)$, we have 
\[
   \lim_{t\rightarrow 0^+} \frac{t}{s(r,t)}=\frac{\lambda_1(\mu)^2-r^2}{\lambda_1(\mu)^2}.
\]
When $r\geq \lambda_2(\mu)$, we have 
\[
  \lim_{t\rightarrow 0^+}s(r,t)t=r^2-\lambda_2(\mu)^2.
 \]
\end{lemma}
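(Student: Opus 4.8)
The plan is to treat everything through the implicit equation of Proposition~\ref{prop:equivalence-two-eqns}: for $(r,t)\in(0,\infty)^2$ the quantity $s(r,t)$ is the unique $s\in(t,\infty)$ with $k(s,t)=r^2$, and I will use throughout the factorization
\[
k(s,t)=k(s,0)\Bigl(1-\tfrac{t}{s}\Bigr)+(s-t)\,t
\]
recorded in the proof of Lemma~\ref{lemma:monotoniciity-k-s-t}, together with the two boundary values $\lim_{s\to0^+}k(s,0)=\lambda_1(\mu)^2$ and $\lim_{s\to\infty}k(s,0)=\lambda_2(\mu)^2$ from Lemma~\ref{lemma:monotonicity-k-s-0}. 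For analyticity I would apply the (real-)analytic implicit function theorem to $G(s,r,t)=k(s,t)-r^2$: the function $k$ is analytic on $(0,\infty)\times\mathbb{R}$ and $\partial_s k>0$ by Lemma~\ref{lemma:monotoniciity-k-s-t}, so at each solution point $s$ is a real-analytic function of $(r,t)$ (the map $r\mapsto r^2$ being analytic with nonvanishing derivative for $r>0$), and the branch the theorem produces agrees with $s(r,t)$ by uniqueness of the solution in $(t,\infty)$. The inequality $s(r,t)>t$ is built into this definition, equivalently it is $W(t)>t$ from Proposition~\ref{prop:3.2-def-W}.

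For the limit as $t\to0^+$ I would argue by compactness in $[0,\infty]$, classifying the possible limit points of $s(r,t)$. If $s(r,t_n)\to s^\ast\in(0,\infty)$ along some $t_n\to0^+$, then passing to the limit in $k(s,t_n)=r^2$ gives $k(s^\ast,0)=r^2$, which by Lemma~\ref{lemma:monotonicity-k-s-0} forces $r\in(\lambda_1(\mu),\lambda_2(\mu))$ and $s^\ast=s_0$. If $s(r,t_n)\to\infty$, then $t_n/s(r,t_n)\to0$ and, abbreviating $s=s(r,t_n)$, the bound $r^2=k(s,t_n)\ge k(s,0)\bigl(1-t_n/s\bigr)$ gives $r^2\ge\lambda_2(\mu)^2$ in the limit, so $r\ge\lambda_2(\mu)$. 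If $s(r,t_n)\to0$, then $(s-t_n)t_n\to0$ and $r^2\le k(s,0)+(s-t_n)t_n\to\lambda_1(\mu)^2$, so $r\le\lambda_1(\mu)$. These three conditions on $r$ are mutually exclusive with union $(0,\infty)$, so for each fixed $r$ only one type of limit point is admissible; since its value is then uniquely determined ($s_0$, $+\infty$, or $0$), all subsequential limits agree and $\lim_{t\to0^+}s(r,t)=s(r,0)$, which is \eqref{eqn:limit_omega1}.

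The refined asymptotics come out of the same factorization. When $0<r\le\lambda_1(\mu)$ one has $s(r,t)\to0$, and solving for $t/s$ gives
\[
\frac{t}{s}=1-\frac{r^2-(s-t)t}{k(s,0)}\xrightarrow[\;t\to0^+\;]{}1-\frac{r^2}{\lambda_1(\mu)^2}=\frac{\lambda_1(\mu)^2-r^2}{\lambda_1(\mu)^2}.
\]
When $r\ge\lambda_2(\mu)$ one has $s(r,t)\to\infty$, hence $t/s\to0$, and rewriting the factorization as $r^2=k(s,0)(1-t/s)+st-t^2$ and letting $t\to0^+$ yields $st\to r^2-\lambda_2(\mu)^2$. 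For the regime $t\to\infty$ I would instead use $\tfrac{1}{h(s)}-s=\tfrac{k(s,0)}{s}$ to write the defining equation as $(s-t)\bigl(\tfrac{k(s,0)}{s}+t\bigr)=r^2$; since $k(s,0)>\lambda_1(\mu)^2\ge0$ this forces $0<s-t<r^2/t$, whence $\lim_{t\to\infty}(s(r,t)-t)=0$.

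The main obstacle is the $t\to0^+$ analysis in the two boundary regimes: there the limit $0$ or $+\infty$ is not covered by the implicit function theorem and must be obtained by the compactness classification above, with care taken that one may pass to the limit inside $k(s,0)$. This is exactly where the boundary values $k(s,0)\to\lambda_1(\mu)^2$ and $k(s,0)\to\lambda_2(\mu)^2$ (monotone/dominated convergence, Lemma~\ref{lemma:monotonicity-k-s-0}) do the work; the remaining identities are routine manipulations of the single algebraic relation for $k$.
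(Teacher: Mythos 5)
Your proof is correct, and it takes a genuinely more self-contained route than the paper's. Where the paper simply cites Haagerup--Schultz (Lemma 4.10 and Remark 4.11 of their paper) for both the analyticity of $(r,t)\mapsto s(r,t)$ and the limit \eqref{eqn:limit_omega1}, you re-derive the former from the implicit function theorem applied to $k(s,t)-r^2=0$ and the latter from a compactness classification of the subsequential limits of $s(r,t)$ in $[0,\infty]$, using only the boundary values of $k(\cdot,0)$ from Lemma \ref{lemma:monotonicity-k-s-0}; your classification correctly forces a unique admissible limit point ($0$, $s_0$, or $+\infty$) for each fixed $r$, including the endpoint cases $r=\lambda_1(\mu)$ and $r=\lambda_2(\mu)$. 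For $\lim_{t\rightarrow\infty}(s(r,t)-t)=0$ you replace the paper's appeal to the Nevanlinna representation $\omega_1=F_\sigma$ and the asymptotics of the $F$-transform by the elementary bound $0<s-t=r^2/(k(s,0)/s+t)<r^2/t$, which is arguably cleaner and avoids any analytic-function machinery. The two refined asymptotics come from the same algebraic identity as in the paper, only organized around the limits of $k(s,0)$ (already established in Lemma \ref{lemma:monotonicity-k-s-0}) rather than the limits of $h(s)/s$, $sh(s)$ and $s^2(1-sh(s))$ that the paper recomputes by monotone convergence; the content is identical, and your version incidentally sidesteps the paper's misprint in the third of those limits (which should be taken as $s\rightarrow\infty$, not $s\rightarrow 0^+$). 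One small point to tighten: you justify $\partial_s k>0$ by citing the strict monotonicity in Lemma \ref{lemma:monotoniciity-k-s-t}, but strict monotonicity alone does not give a nonvanishing derivative; the factorization $k(s,t)=k(s,0)\left(1-t/s\right)+(s-t)t$ together with $\frac{d}{ds}k(s,0)>0$ and $k(s,0)\geq\lambda_1(\mu)^2\geq 0$ yields $\partial_s k(s,t)\geq t>0$ for $s>t>0$ directly, which is exactly what the implicit function theorem requires.
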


\begin{proof}
The analyticity of the function 
$(r,t)\mapsto s(r,t)$ was proved in \cite[Lemma 4.10]{HaagerupSchultz2007}.
The limit $\lim_{t\rightarrow 0^+}s(r,t)$ was proved in \cite[Lemma 4.10 and Remark 4.11]{HaagerupSchultz2007} and this is equivalent to \eqref{eqn:limit_omega1}. 

We now show that $\lim_{t\rightarrow\infty}(s(r,t)-t)=0$. As we mentioned in the proof of Proposition \ref{prop:3.2-def-W}, we can express subordination function as $\omega_1(z)=F_\sigma(z)$ for some probability measure $\sigma$ on $\mathbb{R}$. Hence, by the asymptotic of $F$-transform (see \cite[Proposition 5.2]{BercoviciVoiculescu1993} for example), we have
\[
 \lim_{t\rightarrow\infty}(s(r,t)-t)=\lim_{t\rightarrow} F_\sigma(it)-it=0.
\]

By the monotone convergence theorem, 
\[
  \lim_{s\rightarrow 0^+}\frac{h(s)}{s}=\lim_{s\rightarrow 0^+}\int_\mathbb{R} \frac{1}{s^2+u^2}d\mu(u)=\frac{1}{\lambda_1(\mu)^2},
\]
\[
  \lim_{s\rightarrow \infty}sh(s)=\lim_{s\rightarrow\infty}\int_0^\infty \frac{s^2}{s^2+u^2}d\mu(u)=1.
  \]
  \[
  \lim_{s\rightarrow 0^+} s^2\Big( 1-sh(s) \Big)=
      \lim_{s\rightarrow 0^+} \int_0^\infty\frac{s^2 u^2}{s^2+u^2}d\mu(u)=\lambda_2(\mu)^2.
\]
Hence, 
when $0<r\leq \lambda_1(\mu)$, by \eqref{eqn:fixed-point-s-t} and \eqref{eqn:limit_omega1}, 
\begin{align*}
r^2&=\lim_{t\rightarrow 0^+}\left( -(s(r,t)-t)^2+\frac{s(r,t)-t}{h(s(r,t))}\right)\\
  &=\lim_{s\rightarrow 0^+}\frac{s}{h(s)}\lim_{t\rightarrow 0^+}\left(1-\frac{t}{s(r,t)}\right).
\end{align*}
Therefore, for $0<r\leq \lambda_1(\mu)$,
\[
   \lim_{t\rightarrow 0^+} \frac{t}{s(r,t)}=\frac{\lambda_1(\mu)^2-r^2}{\lambda_1(\mu)^2}.
\]

For the case  $r\geq \lambda_2(\mu)$, we rewrite \eqref{eqn:fixed-point-s-t} as
\begin{align*}
r^2&=-(s-t)^2+\frac{s-t}{h(s)}\\
      &=\frac{s^2\Big( 1-sh(s) \Big) }{sh(s)}
         +st\Big( 2sh(s)-1\Big)-t^2 sh(s).
\end{align*}
Hence, taking the limit of the above identity, when $r\geq \lambda_2(\mu)$, by \eqref{eqn:limit_omega1}, 
\begin{align*}
  \lim_{t\rightarrow 0^+}s(r,t)t&=r^2-\lim_{s\rightarrow \infty}\frac{s^2\Big( 1-sh(s) \Big) }{sh(s)}\\
    &=r^2-\lambda_2(\mu)^2.
\end{align*}
This finishes the proof. 
\end{proof}

We summarize the discussions above. 
Let $T\in\tilde\CM$ be an $R$--diagonal element, let
  $\lambda\in\C\setminus\{0\}$. Let $\mu= \tilde\mu_{|T|}$, and let
  $s(|\lambda|,t)$ be as in Definition~\ref{def-for-s-t}. Then
  \begin{equation}
   \label{eqn:identity-h-lambda-h}
   h_\lambda(t)=h(s(|\lambda|,t)), \qquad t>0.
  \end{equation}
 Set $s=s(|\lambda|,t)$, then $s$ satisfies
 \begin{equation}\label{eqn:fixed-point-s-t-abs-lambda}
  (s-t)^2-\frac{s-t}{h(s)}+|\lambda|^2=0.
 \end{equation}
Hence 
\[
  1-4|\lambda|^2 h(s)^2=\Big( 1-2h(s)(s-t)\Big)^2\geq 0.
\]
 Moreover, by \cite[Lemma 4.12]{HaagerupSchultz2007},  there exists a $t_\lambda >0$ such that when
    $t>t_\lambda$ we have 
  \begin{equation}
  \label{eqn:s-t-for-large-t}
  	      s-t=\frac{1-\sqrt{1-4|\lambda|^2h(s)^2}}{2h(s)}.
  \end{equation}
Indeed, $\lim_{t\rightarrow\infty}(s-t)=0$ and $\frac{1}{h(s)}\rightarrow 0$ as $s\rightarrow\infty$. Solve the quadratic equation \eqref{eqn:fixed-point-s-t-abs-lambda} for $s-t$ to get the formula for $s-t$ when $t$ is large. However, for small $t$, it is not clear if we need to change the sign in the quadratic formula from the right hand side of \eqref{eqn:s-t-for-large-t}.

\begin{lemma}\label{lemma-limi-omega-2}
Let $\omega_2$ be  the subordination function such that $F_\nu(\omega_2(z))=F_{\tilde{\mu}_{|T-\lambda\unit|}}(z)$. Then, for any $t>0$, 
\[
   \omega_2(it)=i\frac{|\lambda|^2}{s(|\lambda|, t)-t}=\frac{|\lambda|^2}{it-i\omega_1(it)}.
\]
Moreover, 
\begin{align*}
\lim_{t\rightarrow 0^+}\omega_2(it)=
  \begin{cases}
     \infty, \quad  &\text{if} \quad 0<|\lambda|\leq \lambda_1(\mu);\\
       i\frac{|\lambda|^2}{s(|\lambda|,0)}, \quad &\text{if}\quad |\lambda|\in (\lambda_1(\mu),\lambda_2(\mu));\\
       0,\quad & \text{if}, \quad |\lambda|\geq \lambda_2(\mu).
  \end{cases}
\end{align*}
When $0<|\lambda|\leq \lambda_1(\mu)$, we have
\begin{align*}
  \lim_{t\rightarrow 0^+}-it\cdot\omega_2(it)
       &=\lambda_1(\mu)^2-|\lambda|^2.
\end{align*}
When $|\lambda|\geq \lambda_2(\mu)$, we have 
\begin{align*}
 \lim_{t\rightarrow 0^+}\frac{it}{\omega_2(it)}
      =\frac{|\lambda|^2-\lambda_2(\mu)}{|\lambda|^2}.
\end{align*}
\end{lemma}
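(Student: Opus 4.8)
The plan is to reduce everything to the single closed-form identity $\omega_2(it) = i|\lambda|^2/(s(|\lambda|,t)-t)$ and then import the boundary behaviour of $s(|\lambda|,t)$ already recorded in Lemma \ref{lemma-limit-s-0}. First I would extract the relation between the two subordination functions from the fixed-point framework of \cite{BB2007new} used in Proposition \ref{prop:equivalence-two-eqns}. The pair $(\omega_1,\omega_2)$ satisfies the two subordination relations $\omega_2(z) = z + H_1(\omega_1(z))$ and $\omega_1(z) = z + H_2(\omega_2(z))$: the first identifies the denominator $z+H_1(\omega_1(z))$ of the rewritten fixed-point equation in the proof of Proposition \ref{prop:equivalence-two-eqns} with $\omega_2(z)$, and the second, together with the computation $H_2(w) = F_\nu(w) - w = -|\lambda|^2/w$ performed there, reads $\omega_1(z) = z - |\lambda|^2/\omega_2(z)$. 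Rearranging gives $\omega_2(z) = |\lambda|^2/(z - \omega_1(z))$. Specializing to $z = it$ and inserting $\omega_1(it) = is(|\lambda|,t)$ from Proposition \ref{prop:3.2-def-W}, one has $z - \omega_1(z) = -i(s(|\lambda|,t)-t)$, and hence
\[
\omega_2(it) = \frac{|\lambda|^2}{-i(s(|\lambda|,t)-t)} = i\,\frac{|\lambda|^2}{s(|\lambda|,t)-t},
\]
which is the stated formula.

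The three-case limit at $t \to 0^+$ then follows by substituting the value $\lim_{t\to 0^+} s(|\lambda|,t) = s(|\lambda|,0)$ from Lemma \ref{lemma-limit-s-0} and Definition \ref{def-for-s-t}. For $0 < |\lambda| \le \lambda_1(\mu)$ the denominator $s(|\lambda|,t)-t$ tends to $0$, forcing $\omega_2(it) \to \infty$; for $\lambda_1(\mu) < |\lambda| < \lambda_2(\mu)$ it tends to $s(|\lambda|,0) > 0$, giving $\omega_2(it) \to i|\lambda|^2/s(|\lambda|,0)$; and for $|\lambda| \ge \lambda_2(\mu)$ one has $s(|\lambda|,t) \to \infty$ while $t \to 0$, so $\omega_2(it) \to 0$.

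Finally, the two sharp limits are algebraic rearrangements feeding on the refined rates of Lemma \ref{lemma-limit-s-0}. In the regime $0 < |\lambda| \le \lambda_1(\mu)$ I would write $-it\,\omega_2(it) = t|\lambda|^2/(s(|\lambda|,t)-t) = |\lambda|^2/(s(|\lambda|,t)/t - 1)$ and pass to the limit using $t/s(|\lambda|,t) \to (\lambda_1(\mu)^2 - |\lambda|^2)/\lambda_1(\mu)^2$, which after reciprocating yields $\lambda_1(\mu)^2 - |\lambda|^2$. In the regime $|\lambda| \ge \lambda_2(\mu)$ I would write $it/\omega_2(it) = t(s(|\lambda|,t)-t)/|\lambda|^2$ and use $t\,s(|\lambda|,t) \to |\lambda|^2 - \lambda_2(\mu)^2$ together with $t^2 \to 0$ to obtain $(|\lambda|^2 - \lambda_2(\mu)^2)/|\lambda|^2$. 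No genuine analytic obstacle arises, since all limiting behaviour of $s$ is already supplied by Lemma \ref{lemma-limit-s-0}; the only care needed is in tracking the factors of $i$ and in reciprocating the rate $t/s$. The single step carrying real content is the identification $\omega_2(z) = |\lambda|^2/(z-\omega_1(z))$, and I expect that to be the part worth writing out carefully.
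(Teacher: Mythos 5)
Your proof is correct and follows essentially the same route as the paper: both derive the closed form $\omega_2(it)=i|\lambda|^2/(s(|\lambda|,t)-t)$ from the standard subordination identities (the paper via $\omega_1+\omega_2=z+F_{\tilde{\mu}_{|T-\lambda\unit|}}$ combined with the quadratic equation for $s$, you via the equivalent relation $\omega_1(z)=z+H_2(\omega_2(z))$ with $H_2(w)=-|\lambda|^2/w$), and then substitute the boundary rates of $s(|\lambda|,t)$ supplied by Lemma~\ref{lemma-limit-s-0}. Incidentally, your value $(|\lambda|^2-\lambda_2(\mu)^2)/|\lambda|^2$ for the final limit is the correct one; the square on $\lambda_2(\mu)$ is missing in the lemma as stated.
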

\begin{proof}
We have $\omega_1(z)+\omega_2(z)=z+F_{\tilde{\mu}_{|T-\lambda\unit|}}(z)$. Recall that 
$G_{\tilde{\mu}_{|T-\lambda\unit|}}(it)=-ih_\lambda(t)$ and $G_{\tilde{\mu}_{|T-\lambda\unit|}}(it)=G_{\tilde{\mu}_{|T|}}(\omega_1(it))=G_{\tilde{\mu}_{|T|}}(i s(|\lambda|,t))=-ih(s(|\lambda|,t))$. Hence, 
\[
 \omega_1(it)+\omega_2(it)=it+F_{\tilde{\mu}_{|T-\lambda\unit|}}(it)
\]
is rewritten as
\[
  is(|\lambda|,t)+\omega_2(it)=it+\frac{1}{-ih(s(|\lambda|,t))}.
\]
On the other hand, $s=s(|\lambda|,t)$ satisfies the identity
\[
   (s-t)^2-\frac{s-t}{h(s)}+|\lambda|^2=0,
\]
which implies $h(s)=\frac{s-t}{(s-t)^2+|\lambda|^2}$. Therefore, 
\[
   \omega_2(it)=i\left[ t-s+\frac{(s-t)^2+|\lambda|^2}{s-t}\right]=i\frac{|\lambda|^2}{s(|\lambda|,t)-t}=\frac{|\lambda|^2}{it-\omega_1(it)}.
\]
By taking the limit as $t\rightarrow 0^+$ and using Lemma \ref{lemma-limit-s-0}, we obtain $\lim_{t\rightarrow 0^+}\omega_2(it)$.

When $0<|\lambda|\leq \lambda_1(\mu)$, applying Lemma \ref{lemma-limit-s-0} and above formulas, we have
\begin{align*}
  \lim_{t\rightarrow 0^+}-it\cdot\omega_2(it)
     &= \lim_{t\rightarrow 0^+}\frac{t|\lambda|^2}{s(|\lambda|,t)-t}\\
       &=\lim_{t\rightarrow 0^+}\frac{|\lambda|^2}{s(|\lambda|,t)/t-1}\\
       &=\lambda_1(\mu)^2-|\lambda|^2.
\end{align*}
When $|\lambda|\geq \lambda_2(\mu)$, we have 
\begin{align*}
 \lim_{t\rightarrow 0^+}\frac{it}{\omega_2(it)}
    &=\lim_{t\rightarrow 0^+}\frac{s(|\lambda|,t)t-t^2}{|\lambda|^2}
      =\frac{|\lambda|^2-\lambda_2(\mu)}{|\lambda|^2}.
\end{align*}
\end{proof}

We give a proof of the following lemma from \cite{HaagerupSchultz2007} for reader's convenience. 
\begin{lemma}\label{lemma-delta-star-s-t} 
\cite[Lemma 4.14]{HaagerupSchultz2007}
Let $T$ be an unbounded $R$--diagonal element in $\CMD$,
  let $\lambda\in\C\setminus\{0\}$, and let $t>0$. With
  $\mu=\mu_{|T|}$ and $s(|\lambda|,t)$ as in Definition~\ref{defn:s-lambda-t},
  we then have:
  \begin{equation}\label{eqn:delta-star-s-t}
    \Delta\big((T-\lambda\unit)\cc(T-\lambda\unit)+t^2\unit\big)=
    \frac{|\lambda|^2}{|\lambda|^2 + (s(|\lambda|,t)-t)^2} \Delta\big(T\cc
    T + s(|\lambda|,t)^2\unit\big).
  \end{equation}
\end{lemma}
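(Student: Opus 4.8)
The plan is to take logarithms of both sides of \eqref{eqn:delta-star-s-t} and to show that the two resulting functions of $t$ have identical derivatives, after which the constant of integration is fixed by letting $t\to\infty$. Writing $s=s(|\lambda|,t)$ throughout, I would set
\[
L(t)=\log\Delta\big((T-\lambda\unit)\cc(T-\lambda\unit)+t^2\unit\big), \qquad
R(t)=\log\frac{|\lambda|^2}{|\lambda|^2+(s-t)^2}+\log\Delta\big(T\cc T+s^2\unit\big).
\]
By the derivative formula \eqref{eqn:derivative-log-det} together with the subordination identity \eqref{eqn:identity-h-lambda-h}, the left side satisfies $L'(t)=2h_\lambda(t)=2h(s)$. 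The only nonelementary input required at this stage is the analyticity of $(r,t)\mapsto s(r,t)$ from Lemma \ref{lemma-limit-s-0}, which justifies differentiating $R$ by the chain rule.

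For the right side I would use $\frac{d}{ds}\log\Delta(T\cc T+s^2\unit)=2h(s)$, again from \eqref{eqn:derivative-log-det}, and the elementary derivative of the rational prefactor. Writing $s'=\partial s/\partial t$, the two contributions combine to
\[
R'(t)=-\frac{2(s-t)(s'-1)}{|\lambda|^2+(s-t)^2}+2h(s)\,s'.
\]
The decisive simplification comes from the fixed point equation \eqref{eqn:fixed-point-s-t-abs-lambda}, which rearranges to $h(s)=(s-t)/\big((s-t)^2+|\lambda|^2\big)$. Substituting this into the second term makes the $s'$-dependence cancel exactly, leaving $R'(t)=2(s-t)/\big((s-t)^2+|\lambda|^2\big)=2h(s)=L'(t)$. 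Hence $L-R$ is constant in $t$.

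It remains to evaluate this constant, and this is the step demanding the most care. I would let $t\to\infty$. Since $\log\Delta(A+t^2\unit)=\tau\big(\log(A+t^2\unit)\big)$ for a positive operator $A$, one has $\log\Delta(A+t^2\unit)-2\log t=\tau\big(\log(\unit+A/t^2)\big)\to 0$; applied to $A=(T-\lambda\unit)\cc(T-\lambda\unit)$ this gives $L(t)-2\log t\to 0$. On the right, Lemma \ref{lemma-limit-s-0} supplies $\lim_{t\to\infty}(s-t)=0$, so the prefactor tends to $\log 1=0$, while $s\to\infty$ yields $\log\Delta(T\cc T+s^2\unit)-2\log s\to 0$; combined with $\log s-\log t=\log(1+(s-t)/t)\to 0$ this gives $R(t)-2\log t\to 0$. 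Therefore the constant is zero and $L(t)=R(t)$ for all $t>0$, which is \eqref{eqn:delta-star-s-t}. The main obstacle is thus not the differential identity, whose cancellation is forced by the fixed point equation, but the $t\to\infty$ asymptotics of the Fuglede--Kadison determinants: because $T$ may be unbounded, the convergence $\tau\big(\log(\unit+A/t^2)\big)\to 0$ cannot rest on a uniform bound and must instead be obtained by dominated convergence with dominating function $\log(1+x)$, whose integrability against the spectral measure of $A$ is precisely the statement $T-\lambda\unit\in\CMD$.
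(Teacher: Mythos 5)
Your proposal is correct and takes essentially the same route as the paper: the paper integrates $h_\lambda(v)=h(W(v))W'(v)-h(W(v))(W'(v)-1)$ by substitution, whereas you differentiate the logarithms of the two sides, but the decisive cancellation forced by the fixed point equation \eqref{eqn:fixed-point-s-t-abs-lambda} and the determination of the integration constant from $\lim_{t\to\infty}(s(|\lambda|,t)-t)=0$ are the same in both arguments. Your extra care about the $t\to\infty$ asymptotics of the determinants in the unbounded case is a welcome elaboration of a step the paper passes over quickly, not a deviation from its method.
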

\begin{proof}
We use the relation between function $h, h_\lambda$ and the logarithm of Fuglede--Kadison determinant \eqref{eqn:derivative-log-det}. 
The function $s(|\lambda|,t)$ satisfies \eqref{eqn:fixed-point-s-t}, which is
\[
    (s-t)^2-\frac{s-t}{h(s)}+|\lambda|^2=0. 
\]
Fix $\lambda\in\mathbb{C}\backslash\{0\}$, we set $W(t)=s(|\lambda|,t)$. 
Hence, from Equation \eqref{eqn:sub-scalar-lambda-W} and \eqref{eqn:fixed-point-s-t}, for any $v>0$, we have
\[
  h_\lambda(v)= (h\circ W)(v)=\frac{W(v)-v}{(W(v)-v)^2+|\lambda|^2}.
\]

Hence, for $0<t_0<t$, we have 
\begin{align*}
 &\frac{1}{2}\bigg(\log\frac{\Delta((T-\lambda\unit)\cc
 (T-\lambda\unit)+t^2)}{\Delta((T-\lambda\unit)\cc
 (T-\lambda\unit)+t_0^2)}\bigg)\\
 =& \int_{t_0}^t h_\lambda(v)dv\\
 =&\int_{t_0}^t h(W(v))W'(v)dv  -\int_{t_0}^t \frac{W(v)-v}{(W(v)-v)^2+|\lambda|^2}(W'(v)-1)dv \\
=&\int_{W(t_0)}^{W(t)} h(s)ds
  -\int_{W(t_0)-t_0}^{W(t)-t} \frac{x}{x^2+|\lambda|^2} dx\\
 =&\frac{1}{2}\log \frac{\Delta(T^*T +W(t)^2\unit) }{\Delta(T^*T+W(t_0)^2\unit)}
   -\frac{1}{2}\log \frac{ (W(t)-t)^2+|\lambda|^2 }{(W(t_0)-t_0)^2+|\lambda|^2}.
\end{align*}
Hence there is a constant $C$ such that
\begin{align*}
&\log\Delta((T-\lambda\unit)\cc
(T-\lambda\unit)+t^2\unit)\\
=&\log\Delta(T^*T+W(t)^2)-\log(|(W(t)-t)^2+|\lambda|^2)+C.
\end{align*}
Recall that $\lim_{t\rightarrow\infty}W(t)-t=\lim_{t\rightarrow\infty}(s(|\lambda|,t)-t)=0$, we then figure out that $C=\log|\lambda|^2$. The result then follows. 
\end{proof}

By Lemma \ref{lemma-delta-star-s-t} and the limit of $s(|\lambda|,t)$ as in \eqref{eqn:limit_omega1}, we then have the formula for Fuglede--Kadison determinant. 
\begin{theorem}\label{thm5.15}
\cite[Theorem 4.15]{HaagerupSchultz2007}
  Let $T\in\CMD$ be $R$--diagonal, let $\mu=\mu_{|T|}$, and let
  $s(|\lambda|,0)$ be as in Definition~\ref{def-for-s-t}.
  \begin{itemize}
    \item[(i)] If $\lambda_1(\mu)<|\lambda|<\lambda_2(\mu)$, then
      \begin{equation}
        \label{eqn:det-general-t-0}
        \Delta(T-\lambda\unit)=
        \Bigg(\frac{|\lambda|^2}{|\lambda|^2+s(|\lambda|,0)^2}\,
        \Delta(T\cc T +s(|\lambda|,0)^2\unit)\Bigg)^\frac12.
      \end{equation}
    \item[(ii)] If $|\lambda|\leq \lambda_1(\mu)$, then
    $\Delta(T-\lambda\unit)= \Delta(T)$.
    \item[(iii)] If $|\lambda|\geq \lambda_2(\mu)$, then
    $\Delta(T-\lambda\unit)=|\lambda|$.
  \end{itemize}
\end{theorem}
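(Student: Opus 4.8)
The plan is to pass from the regularized determinants $\Delta\big((T-\lambda\unit)\cc(T-\lambda\unit)+t^2\unit\big)$, which are already controlled by Lemma~\ref{lemma-delta-star-s-t}, to the genuine determinant $\Delta(T-\lambda\unit)$ by letting $t\to 0^+$. The starting point is the elementary identity, valid for any $X\in\CMD$,
\[
  \Delta(X)^2=\lim_{t\to 0^+}\Delta(X\cc X+t^2\unit),
\]
which follows from $\Delta(X)=\exp\big(\int_0^\infty\log u\,\d\mu_{|X|}(u)\big)$ by monotone convergence applied to the integrands $\log(u^2+t^2)\downarrow 2\log u$. Taking $X=T-\lambda\unit$ and inserting Lemma~\ref{lemma-delta-star-s-t} yields
\[
  \Delta(T-\lambda\unit)^2
  =\lim_{t\to 0^+}\frac{|\lambda|^2}{|\lambda|^2+(s(|\lambda|,t)-t)^2}\,
   \Delta\big(T\cc T+s(|\lambda|,t)^2\unit\big),
\]
so the whole theorem reduces to evaluating this limit in the three regimes distinguished by the value of $s(|\lambda|,0)$ in Definition~\ref{def-for-s-t}.

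Case (i), $\lambda_1(\mu)<|\lambda|<\lambda_2(\mu)$, is the easy one: here $s(|\lambda|,0)=s_0\in(0,\infty)$, so by \eqref{eqn:limit_omega1} we have $s(|\lambda|,t)\to s_0$. Since $s\mapsto\Delta(T\cc T+s^2\unit)$ is continuous and finite near $s_0>0$, both factors converge separately, the prefactor to $|\lambda|^2/(|\lambda|^2+s_0^2)$ and the determinant to $\Delta(T\cc T+s_0^2\unit)$; taking the positive square root gives \eqref{eqn:det-general-t-0}.

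The degenerate regimes (ii) and (iii) are where the work lies, since both produce indeterminate forms. In case (ii), $|\lambda|\le\lambda_1(\mu)$, we have $s(|\lambda|,0)=0$, so $s(|\lambda|,t)\to 0$ and $t\to 0$ together; the prefactor tends to $1$ because $(s-t)^2\to 0$ (here $|\lambda|>0$, the value $\lambda=0$ being trivial), while $\Delta(T\cc T+s^2\unit)\to\Delta(T\cc T)=\Delta(T)^2$, giving $\Delta(T-\lambda\unit)=\Delta(T)$. Case (iii), $|\lambda|\ge\lambda_2(\mu)$, is the main obstacle: now $s(|\lambda|,0)=+\infty$, so the prefactor collapses to $0$ while the determinant blows up, a genuine $0\cdot\infty$ form. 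The key input is the asymptotic
\[
  \frac{\Delta(T\cc T+s^2\unit)}{s^2}
  =\exp\!\Big(\int_0^\infty\log\big(1+u^2/s^2\big)\,\d\mu_{|T|}(u)\Big)\longrightarrow 1
  \qquad(s\to\infty),
\]
which holds precisely because $\lambda_2(\mu)<\infty$ forces $\int_0^\infty u^2\,\d\mu_{|T|}<\infty$ (if $\lambda_2(\mu)=\infty$ the regime is vacuous). Writing the limit as $|\lambda|^2$ times $\Delta(T\cc T+s^2\unit)/\big(|\lambda|^2+(s-t)^2\big)$ and dividing numerator and denominator by $s^2$, the numerator tends to $1$ and the denominator to $|\lambda|^2/s^2+(1-t/s)^2\to 1$ since $s\to\infty$ and $t/s\to 0$; hence the product converges to $|\lambda|^2$ and $\Delta(T-\lambda\unit)=|\lambda|$. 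I expect the bookkeeping in case (iii)—separating the $s^2$ growth of the determinant against the vanishing prefactor and justifying $\Delta(T\cc T+s^2\unit)\sim s^2$—to be the only genuinely delicate point. The finer limits in Lemma~\ref{lemma-limit-s-0}, such as $s(|\lambda|,t)\,t\to r^2-\lambda_2(\mu)^2$, are not needed for this leading-order computation but serve as a consistency check.
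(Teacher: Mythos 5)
Your proposal is correct and follows essentially the same route as the paper, which obtains the theorem by letting $t\to 0^+$ in the regularized identity of Lemma~\ref{lemma-delta-star-s-t} together with the limit $s(|\lambda|,t)\to s(|\lambda|,0)$ from \eqref{eqn:limit_omega1} (the paper states this in one line, deferring the details to Haagerup--Schultz). Your case analysis, including the $0\cdot\infty$ resolution in case (iii) via $\Delta(T\cc T+s^2\unit)\sim s^2$, correctly supplies exactly the bookkeeping the paper leaves implicit.
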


\vspace{.2cm}

\section{The Brown measure of $R$-diagonal operators}
\label{section:Brown-Rdiag-proof-1}

In this section, we calculate the gradient functions of the logarithmic Fuglede--Kadison determinant of $T-\lambda\unit$ and provide a proof for the Brown measure formula of $R$-diagonal operators which simplifies Haagerup--Schultz's original arguments.
\begin{lemma}\label{lemma-s-t-Cauchy}
Let $T\in\CMD$ be $R$--diagonal and $\lambda\in\mathbb{C}\backslash\{0\}$. Then
\[
  \tau\huge( (\lambda\unit-T)[(\lambda \unit-T)^*(\lambda \unit-T)+t^2\unit]^{-1} \huge)
     =\frac{\lambda}{|\lambda|^2}\frac{(s-t)^2}{|\lambda|^2+(s-t)^2}
\]
and
\[
  \tau\huge( (\lambda\unit-T)^*[(\lambda\unit-T)^*(\lambda\unit-T)+t^2\unit]^{-1} \huge)
     =\frac{\overline{\lambda}}{|\lambda|^2}\frac{(s-t)^2}{|\lambda|^2+(s-t)^2}
      \]
where $s=s(|\lambda|,t)$.
\end{lemma}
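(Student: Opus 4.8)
The plan is to recognize both traces as Wirtinger derivatives of the logarithmic Fuglede--Kadison determinant and then evaluate them via the explicit formula from Lemma~\ref{lemma-delta-star-s-t}. Write $A=\lambda\unit-T$ and $M=A^*A+t^2\unit=(T-\lambda\unit)^*(T-\lambda\unit)+t^2\unit$, which is positive and bounded below by $t^2\unit$ since $t>0$. Set
\[
g(\lambda,t)=\log\Delta(M)=\tau(\log M),
\]
the last equality holding because $M\geq 0$ and $T\in\CMD$ keeps $\tau(\log M)$ finite. Since $M$ depends smoothly on $\re\lambda,\im\lambda$ and $P\mapsto\tau(\log P)$ is differentiable on positive invertible operators with derivative $\tau(P^{-1}\,\cdot\,)$, the trace identity $\frac{\partial}{\partial\bar\lambda}\tau(\log M)=\tau(M^{-1}\frac{\partial M}{\partial\bar\lambda})$ applies. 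Because $\frac{\partial M}{\partial\bar\lambda}=\lambda\unit-T=A$ and $\frac{\partial M}{\partial\lambda}=A^*$, cyclicity of $\tau$ gives
\[
\frac{\partial g}{\partial\bar\lambda}=\tau(AM^{-1}),\qquad \frac{\partial g}{\partial\lambda}=\tau(A^*M^{-1}),
\]
which are exactly the two traces to be computed.

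Next I would substitute the determinant formula \eqref{eqn:delta-star-s-t}. Writing $\rho=|\lambda|^2$ and $s=s(|\lambda|,t)$, it yields
\[
g=\log\rho-\log\big(\rho+(s-t)^2\big)+\log\Delta(T^*T+s^2\unit).
\]
Regarding $g=\Phi(\rho,s)$ composed with $s=s(\sqrt\rho,t)$, the chain rule gives $\frac{dg}{d\rho}=\partial_\rho\Phi+(\partial_s\Phi)\,\frac{\partial s}{\partial\rho}$. The decisive point is an envelope-type cancellation: using $\frac{d}{ds}\log\Delta(T^*T+s^2\unit)=2h(s)$ from \eqref{eqn:derivative-log-det} together with the relation $h(s)=\frac{s-t}{(s-t)^2+|\lambda|^2}$ read off the fixed point equation \eqref{eqn:fixed-point-s-t-abs-lambda}, one finds
\[
\partial_s\Phi=-\frac{2(s-t)}{\rho+(s-t)^2}+2h(s)=0.
\]
Hence only the explicit $\rho$-dependence survives, so $\frac{dg}{d\rho}=\frac{1}{\rho}-\frac{1}{\rho+(s-t)^2}=\frac{(s-t)^2}{\rho(\rho+(s-t)^2)}$.

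Finally I would combine the two steps. Since $\rho=\lambda\bar\lambda$, we have $\partial_{\bar\lambda}\rho=\lambda$ and $\partial_\lambda\rho=\bar\lambda$, whence
\[
\tau(AM^{-1})=\frac{\partial g}{\partial\bar\lambda}=\frac{(s-t)^2}{|\lambda|^2\big(|\lambda|^2+(s-t)^2\big)}\,\lambda=\frac{\lambda}{|\lambda|^2}\frac{(s-t)^2}{|\lambda|^2+(s-t)^2},
\]
and symmetrically $\tau(A^*M^{-1})=\frac{\bar\lambda}{|\lambda|^2}\frac{(s-t)^2}{|\lambda|^2+(s-t)^2}$, which are the two claimed identities. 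The differentiability of $s(|\lambda|,t)$ in $\lambda$ away from the origin is guaranteed by the analyticity of $(r,t)\mapsto s(r,t)$ from Lemma~\ref{lemma-limit-s-0}, and $s>t$ keeps all quantities well defined. I expect the main obstacle to be the rigorous justification of the ``derivative of $\tau(\log M)$'' formula for possibly unbounded $T\in\CMD$; I would handle this by exploiting that $M\geq t^2\unit$ is bounded below, so $\log M$ is controlled near the bottom of the spectrum and the relevant difference quotients converge in $L^1(\CM,\tau)$, after which cyclicity of $\tau$ gives the two gradient expressions above.
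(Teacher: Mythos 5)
Your proposal is correct and follows essentially the same route as the paper: both differentiate the logarithm of the determinant identity from Lemma~\ref{lemma-delta-star-s-t} in $\bar\lambda$ (resp.\ $\lambda$), and both rely on the same cancellation — the coefficient of $\partial s/\partial\bar\lambda$ vanishes because the fixed point equation \eqref{eqn:fixed-point-s-t-abs-lambda} gives $h(s)=\frac{s-t}{(s-t)^2+|\lambda|^2}$ — so that only the explicit $\lambda$-dependence survives. Your packaging of this as an envelope argument in the variable $\rho=|\lambda|^2$ is just a cleaner bookkeeping of the same computation.
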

\begin{proof}
Recall from Lemma~\ref{lemma-delta-star-s-t} that
\[
   \Delta\big((\lambda\unit-T)^*(\lambda\unit-T)+t^2\unit\big)=
    \frac{|\lambda|^2}{|\lambda|^2 + (s(|\lambda|,t)-t)^2} \Delta\big(T\cc
    T + s(|\lambda|,t)^2\unit\big).
    \]
Taking logarithmic, we obtain
\[
\log\Delta\big((\lambda\unit-T)\cc(\lambda\unit-T)+t^2\unit\big)=
   \log\Delta\big(T\cc
    T + s(|\lambda|,t)^2\unit\big) + \log  \frac{|\lambda|^2}{|\lambda|^2 + (s(|\lambda|,t)-t)^2},
\]
which is
\begin{align*}
  \tau\big( \log[(\lambda \unit-T)^* & (\lambda \unit-T)+t^2\unit]\big)\\
    &=\tau\big( 
    \log(T^*T+s^2\unit)\big)+\log |\lambda|^2-\log\big( |\lambda|^2+(s-t)^2 \big).
\end{align*}
The function $s(|\lambda|,t)$ is analytic for $(|\lambda|, t)$. 
Take the derivative $\frac{\partial}{\partial \overline{\lambda}}$, it follows that 
\begin{align*}
 &\tau\big( (\lambda\unit-T)[(\lambda \unit-T)^*(\lambda \unit-T)+t^2\unit]^{-1}   \big)\\
    =&2s\cdot\tau\big( (T^*T+s^2\unit)^{-1} \big)\cdot \frac{\partial s(|\lambda|,t)}{\partial \overline{\lambda}}
    +\frac{\lambda}{|\lambda|^2}-\frac{1}{|\lambda|^2+(s-t)^2}\left( \lambda+2(s-t)\cdot\frac{\partial s(|\lambda|,t)}{\partial \overline{\lambda}} \right)\\
    =&2\left( h(s)-\frac{s-t}{|\lambda|^2+(s-t)^2}\right)\frac{\partial s(|\lambda|,t)}{\partial \overline{\lambda}}+\frac{\lambda}{|\lambda|^2}-\frac{\lambda}{|\lambda|^2+(s-t)^2}\\
    =&\frac{\lambda}{|\lambda|^2}\frac{(s-t)^2}{|\lambda|^2+(s-t)^2},
\end{align*}
where we used the identity \eqref{eqn:fixed-point-s-t} in the last step. 
The other formula can be obtained by taking the derivative with respect to $\lambda$. 
\end{proof}

\begin{remark} 
	\label{lemma:4.2}
	By applying the identity $x(x^*x+\varepsilon)^{-1}=(xx^*+\varepsilon)^{-1}x$ for $x\in\mathcal{M}$ and the tracial property, we can deduce  
 \begin{equation*}
 	 \begin{aligned}
 	 	&\tau\huge( (\lambda\unit-T)^*[(\lambda\unit-T)^*(\lambda\unit-T)+t^2\unit]^{-1} \huge)\\
 	 &\qquad\qquad=\tau\huge( (\lambda\unit-T)^*[(\lambda\unit-T)(\lambda\unit-T)^*+t^2\unit]^{-1} \huge).
 	 \end{aligned}
 \end{equation*}
\end{remark}

\begin{lemma}
	\label{lemma:formula-derivative-t-0}
We have the partial derivative formulas for $\log\Delta((T-\lambda\unit)^*(T-\lambda\unit))$
\begin{align*}
  & \tau\huge( (\lambda\unit-T)[(\lambda \unit-T)^*(\lambda \unit-T)]^{-1} \huge)\\
  =&
      \begin{cases}
         0, \quad & \text{for} \quad 0< |\lambda|\leq \lambda_1(\mu_{|T|});\\
         \frac{\lambda}{|\lambda|^2}\frac{s(|\lambda|,0)^2}{s(|\lambda|,0)^2+|\lambda|^2},\quad &\text{for} \quad \lambda_1(\mu_{|T|})<|\lambda|<\lambda_2(\mu_{|T|});\\
         \frac{\lambda}{|\lambda|^2},\quad &\text{for}\quad |\lambda|\geq \lambda_2(\mu_{|T|})
      \end{cases}
\end{align*}
and
\begin{align*}
  & \tau\huge( (\lambda\unit-T)^*[(\lambda \unit-T)^*(\lambda \unit-T)]^{-1} \huge)\\
  =&
      \begin{cases}
         0, \quad & \text{for} \quad 0< |\lambda|\leq \lambda_1(\mu_{|T|});\\
         \frac{\overline{\lambda}}{|\lambda|^2}\frac{s(|\lambda|,0)^2}{s(|\lambda|,0)^2+|\lambda|^2},\quad &\text{for} \quad \lambda_1(\mu_{|T|})<|\lambda|<\lambda_2(\mu_{|T|});\\
         \frac{\overline{\lambda}}{|\lambda|^2},\quad &\text{for}\quad |\lambda|\geq \lambda_2(\mu_{|T|}),
      \end{cases}
\end{align*}
where $s(|\lambda|,0)$ is defined in Definition \ref{def-for-s-t}.
\end{lemma}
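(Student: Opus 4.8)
The plan is to read off both formulas as the $t\to 0^+$ limits of the regularized identities already established in Lemma~\ref{lemma-s-t-Cauchy}. For every $t>0$ that lemma gives
\[
\tau\bigl((\lambda\unit-T)[(\lambda\unit-T)^*(\lambda\unit-T)+t^2\unit]^{-1}\bigr)=\frac{\lambda}{|\lambda|^2}\,\frac{(s-t)^2}{|\lambda|^2+(s-t)^2},\qquad s=s(|\lambda|,t),
\]
together with the conjugate statement obtained by replacing $\lambda$ with $\overline{\lambda}$. Since the left-hand side of the present lemma is exactly $\partial_{\overline{\lambda}}\log\Delta\bigl((T-\lambda\unit)^*(T-\lambda\unit)\bigr)$ (a Wirtinger computation identical to the one underlying Lemma~\ref{lemma-s-t-Cauchy}, since $\log\Delta((T-\lambda\unit)^*(T-\lambda\unit))=2\log\Delta(T-\lambda\unit)$), the whole problem reduces to computing $\lim_{t\to 0^+}\frac{(s(|\lambda|,t)-t)^2}{|\lambda|^2+(s(|\lambda|,t)-t)^2}$, which I would extract directly from the boundary behaviour of $s(|\lambda|,t)$ recorded in Lemma~\ref{lemma-limit-s-0}.

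The three cases match the three cases of $s(|\lambda|,0)$ in Definition~\ref{def-for-s-t}. When $0<|\lambda|\le\lambda_1(\mu_{|T|})$ we have $s(|\lambda|,t)\to s(|\lambda|,0)=0$, so $s-t\to 0$ and the fraction tends to $0$. When $\lambda_1(\mu_{|T|})<|\lambda|<\lambda_2(\mu_{|T|})$ we have $s(|\lambda|,t)\to s(|\lambda|,0)=s_0\in(0,\infty)$, so $s-t\to s_0$ and the fraction tends to $\frac{s_0^2}{|\lambda|^2+s_0^2}$. When $|\lambda|\ge\lambda_2(\mu_{|T|})$ we have $s(|\lambda|,t)\to+\infty$ while $t\to 0$, so $s-t\to\infty$ and the fraction tends to $1$. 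Multiplying by $\frac{\lambda}{|\lambda|^2}$ (resp.\ $\frac{\overline{\lambda}}{|\lambda|^2}$) yields the three stated values, and the adjoint formula is identical after conjugation.

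The one genuine difficulty is justifying that the limit $t\to 0^+$ may be passed inside the trace, i.e.\ the continuity
\[
\tau\bigl((\lambda\unit-T)[(\lambda\unit-T)^*(\lambda\unit-T)]^{-1}\bigr)=\lim_{t\to0^+}\tau\bigl((\lambda\unit-T)[(\lambda\unit-T)^*(\lambda\unit-T)+t^2\unit]^{-1}\bigr).
\]
Where $T-\lambda\unit$ has a bounded inverse this is immediate from norm convergence of the resolvent, but near the inner circle $|\lambda|=\lambda_1(\mu_{|T|})$ the inverse may be unbounded, so a dominated-convergence argument in $(\CM,\tau)$ is required; this is the step I expect to cost the most effort.

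I would sidestep this issue in the two extreme regimes by differentiating the explicit Fuglede--Kadison determinant of Theorem~\ref{thm5.15} instead. For $|\lambda|\le\lambda_1(\mu_{|T|})$ one has $\log\Delta(T-\lambda\unit)=\log\Delta(T)$, a constant, so $\partial_{\overline{\lambda}}$ of $2\log\Delta(T-\lambda\unit)$ vanishes; for $|\lambda|\ge\lambda_2(\mu_{|T|})$ one has $2\log\Delta(T-\lambda\unit)=\log(\lambda\overline{\lambda})$, whose $\partial_{\overline{\lambda}}$ is exactly $\lambda/|\lambda|^2$. In the annulus $\lambda_1(\mu_{|T|})<|\lambda|<\lambda_2(\mu_{|T|})$ one may likewise differentiate the formula \eqref{eqn:det-general-t-0}; here the terms carrying $\partial_{\overline{\lambda}}s(|\lambda|,0)$ cancel, because the fixed-point relation \eqref{eqn:fixed-point-s-t} at $t=0$ gives $h(s_0)=s_0/(|\lambda|^2+s_0^2)$, which kills the coefficient of $\partial_{\overline{\lambda}}s_0$ and reproduces the middle value. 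This both confirms the limit computation and avoids the interchange subtlety in every case.
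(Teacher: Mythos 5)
Your final argument—differentiating the explicit Fuglede--Kadison formulas of Theorem~\ref{thm5.15} in each of the three regimes, with the coefficient of $\partial_{\overline{\lambda}}s(|\lambda|,0)$ cancelling in the annulus via the $t=0$ fixed-point relation \eqref{eqn:fixed-point-s-t}—is exactly the paper's proof, which invokes the computation of Lemma~\ref{lemma-s-t-Cauchy} at $t=0$ using the real analyticity of $s(|\lambda|,0)$ for the middle case and the constancy of $\Delta(T-\lambda\unit)$ (resp.\ $\Delta(T-\lambda\unit)=|\lambda|$) for the two extreme cases. Your preliminary discussion of passing $t\to 0^+$ inside the trace is a reasonable aside, but the route you actually settle on coincides with the paper's.
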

\begin{proof}
If $\lambda_1(\mu_{|T|})<|\lambda|<\lambda_2(\mu_{|T|})$, the result follows from Lemma \ref{lemma-s-t-Cauchy} because $s(|\lambda|,0)\in (0,\infty)$ satisfying the same equation $k(s,0)=|\lambda|^2$ and is real analytic function of $\lambda$ by Lemma \ref{lemma-limit-s-0}. 	
	
If $0< |\lambda|\leq \lambda_1(\mu_{|T|})$, by Theorem \ref{thm5.15}, we have 
$\Delta(T-\lambda\unit)=\Delta(T)$. Hence, the derivatives are equal to zero. Finally, when $|\lambda|\geq \lambda_2(\mu_{|T|})$, we have $\Delta(T-\lambda\unit)=|\lambda|$ which yields the other case of the formula. 
\end{proof}

We can now give an alternative proof of Haagerup--Schultz's result on the Brown measure of $R$-diagonal operators inspired by the proof presented in \cite{BSS2018}. 
\begin{proof}[Proof of Theorem \ref{thm-main}]
For $\lambda$ such that $\lambda_1(\mu)<|\lambda|<\lambda_2(\mu)$, taking the limit
$t\rightarrow 0^+$ in \eqref{eqn:fixed-point-s-t}, we obtain
\[
  s(|\lambda|,0)^2+|\lambda|^2=\frac{s(|\lambda|,0)}{h(s(|\lambda|,0))},
\]
which yields, 
\begin{align*}
  \tau\huge( (\lambda\unit-T)^*[(\lambda \unit-T)^*(\lambda \unit-T)]^{-1} \huge)&=
   \frac{\overline{\lambda}}{|\lambda|^2}\frac{s(|\lambda|,0)^2}{s(|\lambda|,0)^2+|\lambda|^2}\\
   &=\frac{\overline{\lambda}}{|\lambda|^2} \frac{h(s(|\lambda|,0))}{s(|\lambda|,0)}s(|\lambda|,0)^2\\
   &=\frac{\overline{\lambda}}{|\lambda|^2} \tau\Big([T^*T+s(|\lambda|,0)^2\unit]^{-1}\Big)s(|\lambda|,0)^2\\
   &=\frac{\overline{\lambda}}{|\lambda|^2} \int_0^\infty \frac{s(|\lambda|,0)^2}{s(|\lambda|,0)^2+u^2}d\mu_{|T|}(u)\\
   &=\frac{\overline{\lambda}}{|\lambda|^2} \left( 1+ \psi_{\mu_{T^*T}}\left(-\frac{1}{s(|\lambda|,0 )^2}\right) \right).
\end{align*}
We can check that
\begin{equation}\label{eqn:S-inverse-psi}
  S_{\mu_{T^*T}}^{\langle -1\rangle}\left(\frac{1}{|\lambda|^{2}}\right)=\psi_{\mu_{T^*T}}\left(-\frac{1}{s(|\lambda|,0)^2}\right).
\end{equation}
Indeed,
recall that the $S$-transform satisfies $S_\mu(\psi_\mu(z))=z\cdot\frac{\psi_\mu(z)+1}{\psi_\mu(z)}$. The above calculation shows that
\begin{align*}
 \psi_{\mu_{T^*T}}\left(-\frac{1}{s(|\lambda|,0 )^2}\right) =
   \frac{s(|\lambda|,0)^2}{s(|\lambda|,0)^2+|\lambda|^2}-1=-\frac{|\lambda|^2}{s(|\lambda|,0)^2+|\lambda|^2}.
\end{align*}
Hence, 
\begin{align*}
  S_{\mu_{T^*T}} \left(\psi_{\mu_{T^*T}}\left(-\frac{1}{s(|\lambda|,0 )^2}\right)  \right)
     =-\frac{1}{s(|\lambda|,t)^2}\cdot-\frac{s(|\lambda|,t)^2}{|\lambda|^2} =\frac{1}{|\lambda|^2}
\end{align*}
which yields \eqref{eqn:S-inverse-psi}.

Therefore, we have 
\begin{align*}
&\tau\huge( (\lambda\unit-T)^*[(\lambda \unit-T)^*(\lambda \unit-T)]^{-1} \huge)\\
   =&\begin{cases}
     0,\quad &\text{for}\quad |\lambda|\leq \lambda_1(\mu);\\
      \frac{1}{\lambda} \left( 1+ \psi_{\mu_{T^*T}}\left(-\frac{1}{s(|\lambda|,0 )^2}\right) \right), \quad & \text{for} \quad \lambda_1(\mu) < |\lambda|<\lambda_2(\mu)\\
   \frac{1}{\lambda}, \quad & \text{for} \quad |\lambda|\geq \lambda_2(\mu).\\
   \end{cases}\\
   =&\begin{cases}
     0,\quad &\text{for}\quad |\lambda|\leq \lambda_1(\mu);\\
      \frac{1}{\lambda} \left( 1+ S_{\mu_{T^*T}}^{\langle -1\rangle}\left(\frac{1}{|\lambda|^{2}}\right) \right), \quad & \text{for} \quad \lambda_1(\mu) < |\lambda|<\lambda_2(\mu)\\
   \frac{1}{\lambda}, \quad & \text{for} \quad |\lambda|\geq \lambda_2(\mu).
   \end{cases}
\end{align*}
Recall that
\[
   \mu_T=\frac{2}{\pi}\frac{\partial}{\partial \overline{\lambda}}
      \frac{\partial}{\partial \lambda}\log \Delta(T-\lambda\unit)
\]
and
\[
   \frac{\partial}{\partial \lambda}\log \Delta(T-\lambda\unit)=\frac{1}{2} \tau\huge( (\lambda\unit-T)^*[(\lambda \unit-T)^*(\lambda \unit-T)]^{-1} \huge).
\]
Hence, the Brown measure $\mu_T$ is supported in $\{\lambda\in\mathbb{C}:\lambda_1(\mu_{|T|})\leq |\lambda|<\lambda_2(\mu_{|T|})\}$.
When $\lambda_1(\mu_{|T|})<|\lambda|<\lambda_2(\mu_{|T|})$, the Brown measure $\mu_T$ has the density 
\[
-\frac{1}{2\pi |\lambda|^4}\left( S_{\mu_{T^*T}}^{\langle -1\rangle}\right)'\left(\frac{1}{|\lambda|^{2}}\right).
\]
Proposition \ref{prop:s-trans-monotonicity} tells us that the map $r\mapsto S_{\mu_{T^*T}}^{\langle -1 \rangle }(r^{-2})$ is increasing on $(\lambda_1(\mu_{|T|}), \lambda_2(\mu_{|T|}))$ with range $(\delta-1,0)$, where $\delta=\mu(\{0\})$. 
This allows us to conclude that the Brown measure $\mu_{|T|}$ is the rotationally invariant probability measure such that
  \begin{align}
  \label{eqn:Brown-Rdiag-1}
    \mu_{T}\{ \lambda\in\mathbb{C}: |\lambda|\leq r\}=
      \begin{cases}
         0, \quad & \text{for} \quad  r<\lambda_1(\mu_{|T|});\\
         1+S^{\langle -1 \rangle}_{\mu_{T^*T}}(r^{-2}),\quad &\text{for} \quad \lambda_1(\mu_{|T|})<r<\lambda_2(\mu_{|T|});\\
         1,\quad &\text{for}\quad r\geq \lambda_2(\mu_{|T|}).
      \end{cases}
  \end{align}
In particular, when $\delta>0$, then $\lambda_1(\mu)=0$ and the Brown measure $\mu_T$ has a mass at the origin with the mass $\mu_T(\{0\})=\delta$. 
\end{proof}

We can now restate the Brown measure of $R$-diagonal operators in terms of subordination functions as follows. 
\begin{theorem}
\label{thm:Brown-Rdiag-main-2}
Let $T\in\CMD$ be $R$--diagonal and $\lambda\in\mathbb{C}\backslash\{0\}$. The Brown measure $\mu_{T}$ is the rotationally invariant probability measure such that
  \begin{align}
  \label{eqn:Brown-Rdiag-2}
    \mu_{T}\{ \lambda\in\mathbb{C}: |\lambda|\leq r\}=
    \frac{s(r,0)^2}{s(r,0)^2+r^2},
  \end{align}
where $s(|\lambda|,0)$ is defined in Definition \ref{def-for-s-t}. When $\lambda_1(\mu_{|T|})<r<\lambda_2(\mu_{|T|})$, the function $s(r,0)$ is the unique positive solution $s$ of the following equation
\[
     s^2+r^2-\frac{s}{h(s)}=0.
\]
\end{theorem}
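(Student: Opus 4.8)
The plan is to read off \eqref{eqn:Brown-Rdiag-2} as a direct reformulation of Theorem \ref{thm-main}, using the identities already established in its proof, so that no new analytic input is required and the statement becomes transparent rather than an independent computation.

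First I would work on the essential annulus $\lambda_1(\mu_{|T|}) < r < \lambda_2(\mu_{|T|})$. There the proof of Theorem \ref{thm-main} records that
\[
S_{\mu_{T^*T}}^{\langle -1\rangle}(r^{-2}) = \psi_{\mu_{T^*T}}\!\left(-\tfrac{1}{s(r,0)^2}\right) = -\frac{r^2}{s(r,0)^2 + r^2}.
\]
Feeding this into the formula \eqref{eqn:Brown-Rdiag-1} of Theorem \ref{thm-main} gives
\[
\mu_T\{|\lambda|\le r\} = 1 + S_{\mu_{T^*T}}^{\langle -1\rangle}(r^{-2}) = \frac{s(r,0)^2}{s(r,0)^2 + r^2},
\]
which is exactly \eqref{eqn:Brown-Rdiag-2} on this range.

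Next I would check that the single expression $s(r,0)^2/(s(r,0)^2+r^2)$ also reproduces the two degenerate regimes, so that \eqref{eqn:Brown-Rdiag-2} holds for every $r>0$. By the boundary conventions in Definition \ref{def-for-s-t} one has $s(r,0)=0$ for $0<r\le \lambda_1(\mu_{|T|})$, giving value $0$, and $s(r,0)=+\infty$ for $r\ge \lambda_2(\mu_{|T|})$, giving limiting value $1$. These match the first and third cases of \eqref{eqn:Brown-Rdiag-1}, so the three cases collapse into a single formula.

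Finally, for the characterization of $s(r,0)$ on the annulus, I would pass to the limit $t\rightarrow 0^+$ in the fixed-point equation \eqref{eqn:fixed-point-s-t}. The continuity statement \eqref{eqn:limit_omega1} from Lemma \ref{lemma-limit-s-0} gives $s(r,t)\rightarrow s(r,0)$, so the limiting relation reads $s^2 + r^2 - s/h(s)=0$; uniqueness of the positive root is precisely Lemma \ref{lemma:monotonicity-k-s-0}, since $k(s,0)=s/h(s)-s^2$ is a strictly increasing bijection of $(0,\infty)$ onto $(\lambda_1(\mu)^2,\lambda_2(\mu)^2)$ and the equation is $k(s,0)=r^2$. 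I do not expect a genuine obstacle, as all the analytic content was already secured for Theorem \ref{thm-main} and in Lemmas \ref{lemma:monotonicity-k-s-0} and \ref{lemma-limit-s-0}; the only point needing care is the bookkeeping of the boundary conventions $s(r,0)\in\{0,+\infty\}$ and verifying that they feed correctly through $s(r,0)^2/(s(r,0)^2+r^2)$ to recover the endpoint values $0$ and $1$.
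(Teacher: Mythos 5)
Your proposal is correct and follows essentially the same route as the paper: both reduce \eqref{eqn:Brown-Rdiag-2} on the annulus to the identity \eqref{eqn:S-inverse-psi} established in the proof of Theorem \ref{thm-main} together with the rewriting of $k(s,0)=r^2$ as $s h(s)=s^2/(s^2+r^2)$, and both dispatch the two degenerate regimes via the conventions $s(r,0)=0$ and $s(r,0)=+\infty$. The only cosmetic difference is that you re-derive the characterization of $s(r,0)$ by letting $t\to 0^+$ in \eqref{eqn:fixed-point-s-t}, whereas the paper simply invokes Definition \ref{def-for-s-t} directly; this changes nothing of substance.
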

\begin{proof}
When $\lambda_1(\mu_{|T|})<r<\lambda_2(\mu_{|T|})$, we recall that $s(r,0)$ denotes the unique positive solution of the equation $k(s,0)=r^2$ as shown in Definition \eqref{def-for-s-t}. The equation $k(s,0)=r^2$ can be rewritten as
\[
     s^2+|\lambda|^2-\frac{s}{h(s)}=0
\]
or
\[
   s h(s)=\frac{s^2}{s^2+r^2}.
\]
Hence, from \eqref{eqn:S-inverse-psi} in the above proof of Theorem \ref{thm-main}, we have 
\begin{align*}
 1+S^{\langle -1 \rangle}_{\mu_{T^*T}}\left(\frac{1}{r^2}\right)&=1+\psi_{\mu_{T^*T}}\left(-\frac{1}{s(r,0)^2}\right)\\
  &=\int_0^\infty\frac{s(r,0)^2}{s(r,0)^2+u^2}d\mu_{|T|}(u)\\
  &=s(r,0) h(s(r,0))\\
  &= \frac{s(r,0)^2}{s(r,0)^2+r^2}.
\end{align*}
When $0\leq r\leq \lambda_1(\mu_{|T|})$, $s(r,0)=0$; and when $r\geq \lambda_2(\mu_{|T|})$, $s(r,0)=\infty$. Hence, the formulation claimed follows from \eqref{eqn:Brown-Rdiag-1}.
\end{proof}

\begin{theorem}
	Let $T\in\CMD$ be $R$--diagonal and $\lambda\in\mathbb{C}\backslash\{0\}$, we have 
	\begin{align*}
	& \tau\huge( (\lambda\unit-T)[(\lambda \unit-T)^*(\lambda \unit-T)]^{-1} \huge)\\
	=&
	\begin{cases}
	0, \quad & \text{for} \quad 0< |\lambda|\leq \lambda_1(\mu_{|T|});\\
	\frac{\lambda}{|\lambda|^2}\mu_{T}\{ \lambda\in\mathbb{C}: |\lambda|\leq r\},\quad &\text{for} \quad \lambda_1(\mu_{|T|})<|\lambda|<\lambda_2(\mu_{|T|});\\
	\frac{\lambda}{|\lambda|^2},\quad &\text{for}\quad |\lambda|\geq \lambda_2(\mu_{|T|})
	\end{cases}
	\end{align*}
	and
	\begin{align*}
	& \tau\huge( (\lambda\unit-T)^*[(\lambda \unit-T)^*(\lambda \unit-T)]^{-1} \huge)\\
	=&
	\begin{cases}
	0, \quad & \text{for} \quad 0< |\lambda|\leq \lambda_1(\mu_{|T|});\\
	\frac{\overline{\lambda}}{|\lambda|^2}\mu_{T}\{ \lambda\in\mathbb{C}: |\lambda|\leq r\},\quad &\text{for} \quad \lambda_1(\mu_{|T|})<|\lambda|<\lambda_2(\mu_{|T|});\\
	\frac{\overline{\lambda}}{|\lambda|^2},\quad &\text{for}\quad |\lambda|\geq \lambda_2(\mu_{|T|}),
	\end{cases}
	\end{align*}
	where $s(|\lambda|,0)$ is defined in Definition \ref{def-for-s-t}.
\end{theorem}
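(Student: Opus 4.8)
The plan is to read off this statement as a direct reformulation of Lemma~\ref{lemma:formula-derivative-t-0}, where the only new ingredient is the recognition that the rational factor appearing in the middle regime there is precisely the radial distribution function of the Brown measure supplied by Theorem~\ref{thm:Brown-Rdiag-main-2}. No new analysis is required; the entire argument is an identification of two already-computed quantities.

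Concretely, I would start from Lemma~\ref{lemma:formula-derivative-t-0}, which already establishes the claimed three-case structure for both $\tau\bigl((\lambda\unit-T)[(\lambda\unit-T)^*(\lambda\unit-T)]^{-1}\bigr)$ and its adjoint analogue, with the value in the intermediate range $\lambda_1(\mu_{|T|})<|\lambda|<\lambda_2(\mu_{|T|})$ given by $\frac{\lambda}{|\lambda|^2}\,\frac{s(|\lambda|,0)^2}{s(|\lambda|,0)^2+|\lambda|^2}$ (respectively $\frac{\overline{\lambda}}{|\lambda|^2}\,\frac{s(|\lambda|,0)^2}{s(|\lambda|,0)^2+|\lambda|^2}$). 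The boundary values $0$ and $\tfrac{\lambda}{|\lambda|^2}$ (resp. $\tfrac{\overline\lambda}{|\lambda|^2}$) carry over verbatim. Thus the work reduces entirely to the middle case.

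For that case I would invoke Theorem~\ref{thm:Brown-Rdiag-main-2}, which states that for $\lambda_1(\mu_{|T|})<r<\lambda_2(\mu_{|T|})$ one has
\[
  \mu_{T}\{\lambda\in\mathbb{C}:|\lambda|\leq r\}=\frac{s(r,0)^2}{s(r,0)^2+r^2}.
\]
Setting $r=|\lambda|$ and substituting into the intermediate-range value from Lemma~\ref{lemma:formula-derivative-t-0} yields exactly $\frac{\lambda}{|\lambda|^2}\,\mu_{T}\{|\lambda|\leq r\}$, and likewise $\frac{\overline{\lambda}}{|\lambda|^2}\,\mu_{T}\{|\lambda|\leq r\}$ for the adjoint trace. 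I would also note for completeness that the two boundary regimes are consistent with this rewriting: by Theorem~\ref{thm-main} the Brown measure assigns mass $0$ to the disk of radius $r\leq\lambda_1(\mu_{|T|})$ and full mass $1$ once $r\geq\lambda_2(\mu_{|T|})$, matching the values $0$ and $\tfrac{\lambda}{|\lambda|^2}$ stated in the two extreme cases.

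I do not anticipate a genuine obstacle here, since every analytic fact has already been proved; the only point requiring a moment of care is the bookkeeping that the variable $r$ in the Brown-measure distribution function is to be identified with $|\lambda|$, so that the factor $\frac{s(|\lambda|,0)^2}{s(|\lambda|,0)^2+|\lambda|^2}$ and the distribution function $\frac{s(r,0)^2}{s(r,0)^2+r^2}$ are literally the same expression. Once this identification is made explicit, the statement follows immediately from the conjunction of Lemma~\ref{lemma:formula-derivative-t-0} and Theorem~\ref{thm:Brown-Rdiag-main-2}.
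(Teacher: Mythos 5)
Your proposal is correct and matches the paper's proof exactly: the paper likewise derives the theorem directly from Lemma~\ref{lemma:formula-derivative-t-0} combined with the Brown measure formula $\mu_{T}\{|\lambda|\leq r\}=\frac{s(r,0)^2}{s(r,0)^2+r^2}$ of Theorem~\ref{thm:Brown-Rdiag-main-2}. The identification $r=|\lambda|$ and the consistency check in the two extreme regimes are exactly the (only) content needed.
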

\begin{proof}
It follows directly from the partial derivative formula in Lemma \ref{lemma:formula-derivative-t-0} and Theorem \ref{thm:Brown-Rdiag-main-2}.
\end{proof}

\section{An alternative proof for the Brown measure formula}
\label{section-alternative-proof}

We present another proof for the Brown measure of $R$-diagonal operators in this section. 
\begin{lemma}\cite[Chapter II-Theorem 1.2]{SaffTotik1997book}
	\label{lemma:log-potential}
	Let $\mu$ be a finite positive measure of compact support on the plane. Denote
	\[
	U^\mu(\lambda)=\int_\mathbb{C} \log\frac{1}{|\lambda-z|} d\mu(z), \qquad \lambda\in\mathbb{C},
	\]
	and, for any $z_0$ and $r>0$, denote the mean value
	\[
	L(U^\mu; z_0, r)=\frac{1}{2\pi}\int_{-\pi}^\pi U^\mu(z_0+re^{i\theta})d\theta.
	\]
	If $r$ is a value for which $\frac{d}{dr}L(U^\mu; z_0, r)$ exists, then 
	\[
	\mu(\{\lambda: |\lambda-z_0|\leq r \})=-r\frac{d}{dr}L(U^\mu;z_0, r).
	\]
\end{lemma}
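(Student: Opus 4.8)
The plan is to reduce the statement to the classical circular mean of $\log|\cdot|$ and then to differentiate a purely radial integral. First I would record the mean value identity
\[
\frac{1}{2\pi}\int_{-\pi}^{\pi}\log\bigl|w+re^{i\theta}\bigr|\,d\theta=\log\max\bigl(|w|,r\bigr),\qquad w\in\mathbb{C},\ r>0.
\]
When $|w|>r$ the function $\zeta\mapsto\log|w-\zeta|$ is harmonic on a neighbourhood of the closed disk of radius $r$, so its circular mean equals its value $\log|w|$ at the centre; when $|w|<r$ one writes $\log|re^{i\theta}+w|=\log r+\log|e^{i\theta}+w/r|$ and uses $\int_{-\pi}^{\pi}\log|e^{i\theta}-b|\,d\theta=0$ for $|b|<1$ (a form of Jensen's formula); the boundary case $|w|=r$ follows by continuity.

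Next I would exchange the order of integration. Because $\mu$ has compact support, the function $(\theta,z)\mapsto\log|z_0+re^{i\theta}-z|$ is integrable for $d\theta\,d\mu(z)$, so Fubini's theorem combined with the identity above gives
\[
L(U^\mu;z_0,r)=-\int_{\mathbb{C}}\Bigl(\frac{1}{2\pi}\int_{-\pi}^{\pi}\log|z_0+re^{i\theta}-z|\,d\theta\Bigr)d\mu(z)=-\int_{\mathbb{C}}\log\max\bigl(|z-z_0|,r\bigr)\,d\mu(z).
\]
Writing $F(r)=-L(U^\mu;z_0,r)$ and $\rho=|z-z_0|$, the elementary identity $\log\max(\rho,r_2)-\log\max(\rho,r_1)=\int_{r_1}^{r_2}\frac{1}{r}\mathbf{1}_{\{r>\rho\}}\,dr$ together with Tonelli's theorem yields, for $0<r_1<r_2$,
\[
F(r_2)-F(r_1)=\int_{r_1}^{r_2}\frac{1}{r}\,\mu\bigl(\{z:|z-z_0|<r\}\bigr)\,dr.
\]

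Hence $F$ is absolutely continuous with an explicitly radial integrand. Set $m^-(r)=\mu(\{|z-z_0|<r\})$ and $m(r)=\mu(\{|z-z_0|\le r\})$, the open and closed disk masses. As $s\uparrow r$ the open disks increase to the open disk of radius $r$, so $m^-$ is left-continuous with left limit $m^-(r)$; as $s\downarrow r$ the open disks decrease to the closed disk, so $m^-$ has right limit $m(r)$. Consequently the left and right derivatives of $F$ at $r$ equal $m^-(r)/r$ and $m(r)/r$ respectively.

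Finally I would invoke the differentiability hypothesis. The two one-sided derivatives agree precisely when $m^-(r)=m(r)$, that is when the circle $\{|z-z_0|=r\}$ carries no $\mu$-mass; so at any $r$ for which $\frac{d}{dr}L(U^\mu;z_0,r)$ exists, this circle is null and $F'(r)=m(r)/r$. Multiplying by $-r$ gives $-r\frac{d}{dr}L(U^\mu;z_0,r)=m(r)=\mu(\{\lambda:|\lambda-z_0|\le r\})$, as claimed. I expect this last step to be the only delicate point: since $\mu$ may have atoms and a singular part, the distinction between the open and closed disks must be tracked carefully, and it is exactly the assumed existence of the derivative that forces the boundary circle to be null and collapses the two radial masses.
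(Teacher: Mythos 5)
Your proof is correct, and it takes a genuinely different (and more complete) route than what appears in the paper. The paper cites this lemma from Saff--Totik and only sketches, in the remark that follows, a derivation valid ``if $U^\mu$ is smooth enough'': one differentiates under the integral sign, recognizes $\frac{d}{dr}\int_{-\pi}^{\pi}U^\mu(re^{i\theta})\,d\theta$ as a flux integral $\frac{1}{r}\int_{|\lambda|=r}\bigl(\partial_x U^\mu\,dy-\partial_y U^\mu\,dx\bigr)$, applies Green's theorem to convert it into $\frac{1}{r}\int_{|\lambda|\le r}\nabla^2 U^\mu$, and then uses the distributional identity $\nabla^2\log|\lambda-z|=2\pi\delta_z$. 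That argument is clean but only heuristic for a general finite measure, since $U^\mu$ need not be differentiable and the Laplacian must be taken in the sense of distributions. Your argument instead rests on the circular mean formula $\frac{1}{2\pi}\int_{-\pi}^{\pi}\log|w+re^{i\theta}|\,d\theta=\log\max(|w|,r)$ plus Fubini, which reduces $L(U^\mu;z_0,r)$ to the purely radial function $-\int_{\mathbb{C}}\log\max(|z-z_0|,r)\,d\mu(z)$; the representation $F(r_2)-F(r_1)=\int_{r_1}^{r_2}\frac{1}{r}\,\mu(\{|z-z_0|<r\})\,dr$ then identifies the two one-sided derivatives with the open-disk and closed-disk masses divided by $r$. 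This is essentially the textbook proof, and it buys two things the paper's sketch does not: it is valid for an arbitrary finite positive compactly supported measure, and it shows that the hypothesis ``$\frac{d}{dr}L$ exists at $r$'' is exactly equivalent to the circle $\{|z-z_0|=r\}$ being $\mu$-null, which is the delicate point you correctly flag at the end. The Fubini justification you gesture at does hold (the negative part of $\log|z_0+re^{i\theta}-z|$ has uniformly bounded circular integrals because the full circular mean equals $2\pi\log\max(|z-z_0|,r)\ge 2\pi\log r$), so there is no gap.
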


\begin{remark}
If $U^\mu$ in Lemma \ref{lemma:log-potential} is a function of $\lambda$ smooth enough, for $z_0=0$ and $\lambda=x+iy$, we can prove Lemma \ref{lemma:log-potential} as follows
\begin{align*}
  	-\frac{d}{dr}L(U^\mu; 0, r)&=-\frac{1}{2\pi}\frac{d}{dr}\int_{-\pi}^\pi U^\mu(re^{i\theta})d\theta\\
  	&=-\frac{1}{2\pi r}\int_{|\lambda|=r}\bigg(\frac{\partial U^\mu}{\partial x}dy-\frac{\partial U^\mu}{\partial y}dx \bigg)\\
  	&=\frac{1}{2\pi r}\int_{|\lambda|=r}\nabla^2_\lambda \big(\int_\mathbb{C} \log{|\lambda-z|} d\mu(z)\big)dxdy\\
  	&=\frac{1}{r}\mu(\{\lambda:|\lambda|\leq r\}).
\end{align*}
When applying Lemma \ref{lemma:log-potential} to calculate the Brown measure of a $R$-diagonal operator, the potential we deal with is $  U^{\mu_T}(\lambda)$, which can be expressed as
\[
  U^{\mu_T}(\lambda)=-\log\Delta (T-\lambda\unit).
\]
As we know from Theorem \ref{thm5.15} and regularity result of $s(|\lambda|,0)$ in Lemma \ref{lemma-limit-s-0}, the potential is a smooth function of $\lambda$ within the domain  $\lambda_1(\mu_{|T|})<|\lambda|<\lambda_2(\mu_{|T|})$.
\end{remark}

\begin{proposition}
	Let $r>0$ and let $s(r,0)$ be as in Definition \ref{def-for-s-t} so that $s(r,0)$ is the unique solution of equation \eqref{eqn:fixed-point-s-t}
	\[
	s^2+r^2-\frac{s}{h(s)}=0,
	\]
	where $h(s)=s\phi((T^*T+s^2\unit)^{-1})$. Then, the Brown measure of $T$ is invariant under rotation and it is given by
	\begin{equation}
	\label{eqn:density-Rdig-sub-2}
	\mu_T(\{\lambda: |\lambda|\leq r \})=\frac{s(r,0)^2}{s(r,0)^2+r^2},
	\end{equation}
	for any $\lambda_1(\mu_{|T|})<r<\lambda_2(\mu_{|T|})$. If $\mu_{|T|}(\{0\})=\sigma>0$, then
	the Brown measure $\mu_T$ has a mass $\sigma$ at the origin. 
\end{proposition}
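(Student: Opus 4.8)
The plan is to exploit the rotational invariance of $\mu_T$ together with the logarithmic potential formula of Lemma~\ref{lemma:log-potential}, so that the whole computation reduces to differentiating the Fuglede--Kadison determinant formula of Theorem~\ref{thm5.15} along the positive real axis. Since $\mu_T$ is rotationally invariant, it suffices to determine the radial distribution function $r\mapsto \mu_T(\{\lambda:|\lambda|\le r\})$. Applying Lemma~\ref{lemma:log-potential} with $z_0=0$ and the potential $U^{\mu_T}(\lambda)=-\log\Delta(T-\lambda\unit)$ identified in the Remark, and using that this potential depends only on $|\lambda|$ (immediate from Theorem~\ref{thm5.15}), the circular mean collapses to the pointwise value $L(U^{\mu_T};0,r)=-\log\Delta(T-r\unit)$. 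Hence
\[
\mu_T(\{\lambda:|\lambda|\le r\}) = -r\,\frac{d}{dr}L(U^{\mu_T};0,r) = r\,\frac{d}{dr}\log\Delta(T-r\unit).
\]

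Next I would substitute the formula of Theorem~\ref{thm5.15}(i), valid for $\lambda_1(\mu_{|T|})<r<\lambda_2(\mu_{|T|})$, namely $2\log\Delta(T-r\unit)=\log r^2-\log(r^2+s^2)+\log\Delta(T^*T+s^2\unit)$ with $s=s(r,0)$, and differentiate in $r$. Writing $s'=\frac{d}{dr}s(r,0)$ and using the identity $\frac{1}{2}\frac{d}{ds}\log\Delta(T^*T+s^2\unit)=h(s)$ from \eqref{eqn:derivative-log-det}, one obtains
\[
\frac{d}{dr}\log\Delta(T-r\unit)=\frac1r-\frac{r+s\,s'}{r^2+s^2}+h(s)\,s'.
\]
The crucial input is the defining equation \eqref{eqn:fixed-point-s-t} at $t=0$, which gives $h(s)=\frac{s}{s^2+r^2}$; substituting this makes the two terms proportional to $s'$ cancel exactly, leaving $\frac{d}{dr}\log\Delta(T-r\unit)=\frac1r-\frac{r}{r^2+s^2}$. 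Multiplying by $r$ yields $\mu_T(\{\lambda:|\lambda|\le r\})=1-\frac{r^2}{r^2+s^2}=\frac{s(r,0)^2}{s(r,0)^2+r^2}$, which is \eqref{eqn:density-Rdig-sub-2}. I regard this cancellation of the $s'$ terms as the decisive algebraic step: it is exactly what lets the potential-theoretic route avoid any explicit control of the derivative $s'(r)$.

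For the atom at the origin I would rewrite the formula as $\mu_T(\{\lambda:|\lambda|\le r\})=s\,h(s)=\int_0^\infty\frac{s^2}{s^2+u^2}\,d\mu_{|T|}(u)$ with $s=s(r,0)$. When $\sigma=\mu_{|T|}(\{0\})>0$ we have $\lambda_1(\mu_{|T|})=0$, and $s(r,0)\to 0$ as $r\to 0^+$ (since $k(\cdot,0)$ is a bijection onto $(0,\lambda_2(\mu_{|T|})^2)$, cf. Lemma~\ref{lemma-limit-s-0}); dominated convergence then gives $\lim_{r\to0^+}\mu_T(\{\lambda:|\lambda|\le r\})=\mu_{|T|}(\{0\})=\sigma$, so $\mu_T$ carries a point mass $\sigma$ at the origin.

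The main obstacle I anticipate is the rigorous application of Lemma~\ref{lemma:log-potential}: verifying that the differentiability hypothesis on $L(U^{\mu_T};0,\cdot)$ holds (this is supplied by the analyticity of $(r,t)\mapsto s(r,t)$ and the continuity statement of Lemma~\ref{lemma-limit-s-0}), and dealing with the compact-support assumption when $T$ is unbounded and $\lambda_2(\mu_{|T|})=\infty$. For the latter I would either restrict attention to the annulus $\lambda_1(\mu_{|T|})<|\lambda|<\lambda_2(\mu_{|T|})$, where the identity is purely local, or approximate $\mu_T$ by its restriction to a large disk; the asserted formula only concerns $r$ in this range, while the boundary behavior is already governed by Theorem~\ref{thm5.15}(ii)--(iii).
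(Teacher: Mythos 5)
Your proposal is correct and follows essentially the same route as the paper's own proof: rotational invariance plus the radial-derivative formula of Lemma~\ref{lemma:log-potential}, applied to $U^{\mu_T}(\lambda)=-\log\Delta(T-\lambda\unit)$, followed by differentiating the Fuglede--Kadison formula of Theorem~\ref{thm5.15}(i) and using \eqref{eqn:fixed-point-s-t} at $t=0$ to cancel the $s'$ terms. Your added justification of the atom at the origin via $\mu_T(\{|\lambda|\le r\})=s\,h(s)$ and dominated convergence is a correct supplement consistent with the paper's earlier treatment.
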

\begin{proof}
	By Proposition \ref{prop:s(r,0)-increasing}, the function $r\rightarrow s(r,0)$ is a real analytic function
	for $\lambda_1(\mu_{|T|})<r<\lambda_2(\mu_{|T|})$. Recall the Fuglede--Kadison formula \eqref{eqn:det-general-t-0}
	\[
	\Delta(T-\lambda\unit)=
	\Bigg(\frac{r^2}{r^2+s(r,0)^2}\,
	\Delta(T^* T +s(r,0)^2\unit)\Bigg)^\frac12,
	\]
	where $r=|\lambda|$. We then take the logarithmic derivative of $\Delta(T-\lambda\unit)$. 
	Note that 
	$s(r,0)\phi( (T^*T+s(r,0)^2\unit)^{-1})=h(s(r,0))$. 
	We then obtain, for $\lambda_1(\mu_{|T|})<r<\lambda_2(\mu_{|T|})$,
	\begin{align}
	\nonumber
	\frac{\partial \log\Delta(T-\lambda\unit)}{\partial r}
	& =\frac{1}{r}-\frac{1}{r^2+s(r,0)^2}\left( r +s(r,0)\frac{ds(r,0)}{dr} \right)
	+h(s(r,0))\frac{ds(r,0)}{dr}\\
	&=\frac{1}{r}-\frac{r}{r^2+s(r,0)^2} \label{eqn:derivation-log-det-Rdiag}\\
	&=\frac{1}{r}\frac{s(r,0)^2}{s(r,0)^2+r^2},\nonumber
	\end{align}
	where we used \eqref{eqn:fixed-point-s-t} to deduce the second identity.
	Now, recall that the Brown measure $\mu_T$ is the unique measure satisfying
	\begin{equation}
	\label{eqn:defn-log-det}
	\log\Delta(T-\lambda\unit)=\int_\mathbb{C}\log|\lambda-z|d\mu_T(z), \qquad \lambda\in \mathbb{C}.
	\end{equation}
	By adopting the notation as in Lemma \ref{lemma:log-potential}, denote
	\[
	U^{\mu_T}(\lambda)=-\int_\mathbb{C} \log|\lambda-z|d\mu_T(z).
	\]
	Observe that $T$ and $e^{i\theta}T$ have the same distribution. Hence, the Brown measure $\mu_T$ is invariant under the transformation $\lambda\mapsto \lambda e^{i\theta}$. The value of the function $\lambda\mapsto U^{\mu_T}(\lambda)$ 
	is also invariant if we replace $\lambda$ by $\lambda e^{i\theta}$ for any $\theta\in [-\pi,\pi]$. The Brown measure $\mu_T$ is then determined by
	\[
	\mu_T(\{\lambda: |\lambda-z_0|\leq r \})=-r\frac{\partial}{\partial r}U^{\mu_T}(\lambda)
	=r\frac{\partial \log(\Delta(T-\lambda\unit))}{\partial r}=\frac{s(r,0)^2}{s(r,0)^2+r^2}.
	\]
	This finishes the proof.
\end{proof}

We can then repeat the calculation in the Proof of Theorem \ref{thm-main} to rewrite \eqref{eqn:density-Rdig-sub-2} in terms of $S$-transform of $T\cc T$, which recovers the Brown measure formula in Theorem \ref{thm-main}.

\section{Freeness with amalgamation and $R$-diagonal operators}
We now describe the connection between the approach in previous section with the Hermitian reduction and subordination method used in \cite{BSS2018}.
We denote by $\mathcal D$ the
commutative $C^*$-algebra of diagonal matrices in $M_2(\mathbb{C})$, where we
consider the same inclusion of $M_2(\mathbb C)$ in $M_2(\mathcal M)$. Let $T\in \mathcal{M}$.
Recall from \cite{NSS2001-Rdiag} that an element $T$  in a tracial $W^*$-noncommutative
probability space $(\mathcal {M}, \tau)$ is $R$-diagonal if and only if the matrix-valued operator \begin{tiny}$\begin{bmatrix}0&T\\
	T^*&0\end{bmatrix}$\end{tiny} and $M_2(\mathbb C)$ are free with amalgamation over
$\mathcal D$ with respect to the expectation
$$\mathbb E_\mathcal D\colon M_2(\mathcal M)\to
\mathcal D,\qquad
\mathbb E_\mathcal D\begin{bmatrix}a_{11}&a_{12}\\
a_{21}&a_{22}\end{bmatrix}=\begin{bmatrix}\tau(a_{11})&0\\
0&\tau(a_{22})\end{bmatrix}.$$ 
Fix now a $\lambda\in\mathbb{C}$. The operators 
\begin{tiny}$\begin{bmatrix}0& \lambda\\
	\overline{\lambda}&0\end{bmatrix}$\end{tiny} and 
\begin{tiny}$\begin{bmatrix}0&T\\
	T^*&0\end{bmatrix}$\end{tiny} 
are free in the operator-valued noncommutative probability space
$(M_2(\mathcal{M}), \mathbb{E}_\mathcal{D})$.

Denote $X=\tiny\begin{bmatrix}0&\lambda\\
\overline{\lambda}&0\end{bmatrix}$ and by $\tiny{\mathcal D\left\langle X \right\rangle}$ the $*$-algebra generated by $\mathcal{D}$ and $X$.  
We now study the other operator-valued subordination function $\Omega_2$ such that
\begin{equation}\label{R-dia-2}
\mathbb E_{\mathcal{D}\langle X\rangle}
\left[\left(\begin{bmatrix}i\varepsilon &\lambda\\
\overline{\lambda}&i\varepsilon\end{bmatrix}-\begin{bmatrix}0&T\\
T\cc&0\end{bmatrix}\right)^{-1}\right]
=\left( \Omega_2\left(\begin{bmatrix}
i\varepsilon & 0 \\
0 & i\varepsilon
\end{bmatrix}\right) -\begin{bmatrix}0 & \lambda  \\
\overline{\lambda} & 0\end{bmatrix}\right)^{-1}.
\end{equation}
Note that this algebra $\tiny{\mathcal D\left\langle X\right\rangle}=M_2(\mathbb{C})$. 
Hence, $\mathbb E_{\mathcal{D}\langle X\rangle}$ is actually the conditional expectation onto $M_2(\mathbb{C})$ given by evaluation of the trace $\tau$ on the entries of the matrix which is the unique conditional
expectation onto $M_2(\mathbb C)$ preserving the trace.
Similar to $\Omega_1$, the subordination function $\Omega_2$ also takes values in $\mathcal{D}$. We then set 
\begin{equation}
\label{eqn:sub-operator-valued-2}
\begin{bmatrix}
i\delta_1 & 0 \\
0 & i\delta_2
\end{bmatrix} =
\Omega_2\left(\begin{bmatrix}
i\varepsilon & 0 \\
0 & i\varepsilon
\end{bmatrix}\right),
\end{equation}
where $\delta_i=\delta_i(\lambda,\varepsilon) (\text{for}\,\, i=1, 2)$ are functions of $\lambda$ and $\varepsilon$.  
Set $A_\lambda=\lambda\unit -T$. 
The left hand side of \eqref{R-dia-2} (see \cite[formula (18)]{BSS2018}) is given by 
\begin{equation}
\label{eqn:left-hand-side-2}
{
\begin{aligned}
&\mathbb E_{ \mathcal{D}\langle X \rangle}
\left[\left(\begin{bmatrix}i\varepsilon &\lambda\\
\overline{\lambda}&i\varepsilon\end{bmatrix}-\begin{bmatrix}0&T\\
T\cc&0\end{bmatrix}\right)^{-1}  \right] \\
=& \mathbb E_{ \mathcal{D}\langle X \rangle} \begin{bmatrix}
-i\varepsilon( [A_\lambda A_\lambda^*+\varepsilon^2\unit]^{-1} ) 
& A_\lambda( [A_\lambda^* A_\lambda+\varepsilon^2\unit]^{-1} ) \\
A_\lambda^*( [A_\lambda A_\lambda^*+\varepsilon^2\unit]^{-1} ) 
&  -i\varepsilon( [A_\lambda^* A_\lambda+\varepsilon^2\unit]^{-1} )
\end{bmatrix}\\
 =&\begin{bmatrix}
 -i\varepsilon \tau( [A_\lambda A_\lambda^*+\varepsilon^2\unit]^{-1} ) 
 & \tau\left(A_\lambda( [A_\lambda^* A_\lambda+\varepsilon^2\unit]^{-1} ) \right) \\
 \tau\left(A_\lambda^*( [A_\lambda A_\lambda^*+\varepsilon^2\unit]^{-1} ) \right)
 &  -i\varepsilon\tau( [A_\lambda^* A_\lambda+\varepsilon^2\unit]^{-1} )
 \end{bmatrix}
\end{aligned}
}
\end{equation}
Then the inverse matrix in the right hand side of \eqref{R-dia-2} is given by
\begin{equation}
\label{eqn:right-hand-side-22}
\begin{aligned}
\begin{bmatrix} i\delta_1 &\lambda\\
\overline{\lambda}&\delta_2 \end{bmatrix}^{-1}=
\begin{bmatrix}
  -i\delta_2(|\lambda|^2+\delta_1\delta_2)^{-1}  & \lambda (|\lambda|^2+\delta_1\delta_2)^{-1} \\
    \overline{\lambda} (|\lambda|^2+\delta_1\delta_2)^{-1}    & -i\delta_1(|\lambda|^2+\delta_1\delta_2)^{-1}
\end{bmatrix}.
\end{aligned}
\end{equation}
Traciality of $\tau$ implies that the $(1,1)$ and $(2,2)$ entries of the above matrices \eqref{eqn:left-hand-side-2} and \eqref{eqn:right-hand-side-22} are
equal, and thus $\delta_1(\lambda,\varepsilon)=\delta_2(\lambda,\varepsilon)$ and is denote by $\delta=\delta(\lambda,\varepsilon)$. Since the above matrices must be equal to each other due to \eqref{R-dia-2}, we then have 
\begin{equation}
\label{eqn:sub-Rdiag-operator-entries}
	\begin{aligned}
	\frac{\delta}{\delta^2+|\lambda|^2}&=\varepsilon \tau\Big( [(\lambda\unit-T)(\lambda\unit-T)^*+\varepsilon^2\unit]^{-1} \Big);\\
	\frac{\lambda}{ \delta^2+ |\lambda|^2}&=   \tau\left((\lambda\unit-T)\left[(\lambda\unit-T)\cc(\lambda\unit-T)+\varepsilon^2\unit\right]^{-1}\right);\\
	\frac{\overline{\lambda}}{\delta^2+|\lambda|^2}&=
	\tau\left((\lambda\unit-T)\cc\left[(\lambda\unit-T)(\lambda\unit-T)\cc+\varepsilon^2\unit\right]^{-1}\right).
	\end{aligned}
\end{equation}

We now use formulas in Lemma \ref{lemma-s-t-Cauchy} to solve $\delta$ in \eqref{eqn:sub-Rdiag-operator-entries}. We identify the function $\delta$ as the subordination studied in Section \ref{section:sub-HS}.
\begin{proposition}
	\label{prop:sub-connection-scalar-op}
For any $\varepsilon>0$, we have $\delta(\lambda,\varepsilon)=\omega_2(i\varepsilon)$  where $\omega_2$ is the subordination function such that $G_{\tilde{\mu}_{|T-\lambda\unit|}}(z)= 
G_{\nu}(\omega_2(z))$ defined in Lemma \ref{lemma-limi-omega-2}. In particular, for $t>0$, $\delta=\delta(\lambda,\varepsilon)$ is positive. Moreover,
\[
\delta=\Im \omega_2(i \varepsilon)=\frac{|\lambda|^2}{s(|\lambda|,\varepsilon)-\varepsilon}
\]
where $s(|\lambda|,\varepsilon)$ is given in Definition \ref{defn:s-lambda-t}. 
\end{proposition}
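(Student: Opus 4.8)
The plan is to read off $\delta$ from the three scalar identities in \eqref{eqn:sub-Rdiag-operator-entries} by feeding in the explicit Cauchy-transform evaluations from Lemma \ref{lemma-s-t-Cauchy}, and then to match the resulting expression against the formula for $\omega_2(i\varepsilon)$ recorded in Lemma \ref{lemma-limi-omega-2}.

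First I would take the second identity in \eqref{eqn:sub-Rdiag-operator-entries},
\[
\frac{\lambda}{\delta^2+|\lambda|^2}=\tau\big((\lambda\unit-T)[(\lambda\unit-T)^*(\lambda\unit-T)+\varepsilon^2\unit]^{-1}\big),
\]
and substitute the value of the right-hand side supplied by Lemma \ref{lemma-s-t-Cauchy} with $t=\varepsilon$ and $s=s(|\lambda|,\varepsilon)$, namely $\frac{\lambda}{|\lambda|^2}\frac{(s-\varepsilon)^2}{|\lambda|^2+(s-\varepsilon)^2}$. Since $\lambda\neq 0$, cancelling the common factor $\lambda$ and solving the resulting scalar equation for $\delta^2+|\lambda|^2$ yields $\delta^2+|\lambda|^2=|\lambda|^2\frac{|\lambda|^2+(s-\varepsilon)^2}{(s-\varepsilon)^2}$, hence
\[
\delta^2=\frac{|\lambda|^4}{(s-\varepsilon)^2}.
\]

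It then remains to choose the correct square root, and this is where the other entries of \eqref{eqn:sub-Rdiag-operator-entries} come in. For the sign I would use the first identity there: its right-hand side $\varepsilon\,\tau\big([(\lambda\unit-T)(\lambda\unit-T)^*+\varepsilon^2\unit]^{-1}\big)$ is manifestly positive, being $\varepsilon$ times the trace of a positive operator, and since $\delta^2+|\lambda|^2>0$ this forces $\delta>0$. Combined with the bound $s(|\lambda|,\varepsilon)>\varepsilon$ from Lemma \ref{lemma-limit-s-0}, which makes $s-\varepsilon>0$, taking the positive square root gives $\delta=\frac{|\lambda|^2}{s(|\lambda|,\varepsilon)-\varepsilon}$.

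Finally, to identify $\delta$ with the scalar subordination function, I would compare directly with Lemma \ref{lemma-limi-omega-2}, which records $\omega_2(i\varepsilon)=i\frac{|\lambda|^2}{s(|\lambda|,\varepsilon)-\varepsilon}$. This gives $\omega_2(i\varepsilon)=i\delta$, i.e.\ $\delta=\Im\omega_2(i\varepsilon)$, which establishes the claimed identification of the operator-valued subordination $\Omega_2$ (on diagonal arguments) with the scalar subordination $\omega_2$ of Section \ref{section:sub-HS}. I do not expect a genuine obstacle here: the computation is algebraically routine, and the only point requiring real care is the choice of square root, which is pinned down by the positivity built into the first entry of \eqref{eqn:sub-Rdiag-operator-entries} together with the elementary bound $s>\varepsilon$.
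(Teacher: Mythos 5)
Your proof is correct, but it runs in the opposite direction from the paper's. The paper never touches the off-diagonal entries: it identifies $\omega_2(i\varepsilon)=i\delta$ directly by writing the scalar subordination relation $G_{\tilde{\mu}_{|T-\lambda\unit|}}(i\varepsilon)=G_\nu(\omega_2(i\varepsilon))$ with $G_\nu(z)=z/(z^2-|\lambda|^2)$, recognizing the left side as the $(1,1)$ entry $-i\varepsilon\,\tau([(\lambda\unit-T)(\lambda\unit-T)^*+\varepsilon^2\unit]^{-1})$ of \eqref{eqn:left-hand-side-2}, and comparing with the first identity of \eqref{eqn:sub-Rdiag-operator-entries}; positivity of $\delta$ and the explicit formula $\delta=|\lambda|^2/(s(|\lambda|,\varepsilon)-\varepsilon)$ then come for free from Lemma \ref{lemma-limi-omega-2}. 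You instead extract $\delta^2=|\lambda|^4/(s-\varepsilon)^2$ from the $(1,2)$ entry via Lemma \ref{lemma-s-t-Cauchy}, fix the sign from the $(1,1)$ entry, and only then match against Lemma \ref{lemma-limi-omega-2}. This is logically sound \emph{provided} Lemma \ref{lemma-s-t-Cauchy} is taken with its Section \ref{section:Brown-Rdiag-proof-1} proof (differentiation of the Fuglede--Kadison formula in Lemma \ref{lemma-delta-star-s-t}); be aware that the paper immediately afterwards gives a ``second proof'' of Lemma \ref{lemma-s-t-Cauchy} \emph{from} this proposition, so your route would make that second proof circular and forfeits the main point of the paper's arrangement, which is that the operator-valued computation recovers Lemma \ref{lemma-s-t-Cauchy} independently. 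The paper's approach buys that independence; yours buys a purely algebraic derivation of the formula for $\delta$ that does not need the explicit form of $G_\nu$.
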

\begin{proof}
Recall that $\mu=\tilde{\mu}_{|T|}$, $\nu=\frac{1}{2}\big(\delta_{|\lambda|}+\delta_{-|\lambda|} \big)$ and $\tilde{\mu}_{|T-\lambda\unit|}=\mu\boxplus\nu$. We have $G_\nu(z)=\frac{-z}{-z^2+|\lambda|^2}$ and by \eqref{eqn:G_tilde_general}, we have 
\[
 G_{\tilde{\mu}_{|T-\lambda\unit|}}(i\varepsilon)=-i\varepsilon \tau\Big( [(\lambda\unit-T)(\lambda\unit-T)^*+\varepsilon^2\unit]^{-1} \Big)
\] 
The subordination relation $G_{\tilde{\mu}_{|T-\lambda\unit|}}(i\varepsilon)= 
G_{\nu}(\omega_2(i\varepsilon))$ is equivalent to
\begin{align*}
	\varepsilon \tau\Big( [(\lambda\unit-T)(\lambda\unit-T)^*+\varepsilon^2\unit]^{-1} \Big)=\frac{-\omega_2(i\varepsilon)}{-\omega_2(i\varepsilon)^2+|\lambda|^2}.
\end{align*}
By comparing the above identity with \eqref{eqn:sub-Rdiag-operator-entries}, we deduce that $\omega_2(\i\varepsilon)=i\delta$. Hence, by Lemma \ref{lemma-limi-omega-2}, it follows that $\delta=\delta(\lambda,\varepsilon)>0$ and 
\[
   \delta=\Im\omega_2(i\varepsilon)=\frac{|\lambda|^2}{s(|\lambda|,\varepsilon)-\varepsilon}. 
\]
This finishes the proof. 
\end{proof}

We have another proof for the partial derivative formula in Lemma \ref{lemma-s-t-Cauchy} by assuming that the $R$-diagonal operator $T$ is bounded. 
\begin{proof}[Second proof for Lemma \ref{lemma-s-t-Cauchy}]
By \eqref{eqn:sub-Rdiag-operator-entries} and Proposition \ref{prop:sub-connection-scalar-op}, we have 
\begin{align*}
	 \tau\left((\lambda\unit-T)\left[(\lambda\unit-T)^*(\lambda\unit-T)+\varepsilon^2\unit\right]^{-1}\right)
	  =\frac{\lambda}{ \delta^2+ |\lambda|^2}=\frac{\lambda}{|\lambda|}\frac{(s-\varepsilon)^2}{|\lambda|^2+(s-\varepsilon)^2}
\end{align*}
 where $s=s(\lambda,\varepsilon)$. The other partial derivative formula follows in the same way by noticing Lemma \ref{lemma:4.2}. 
\end{proof}

\begin{remark}
One can deduce Lemma \ref{lemma:formula-derivative-t-0} from Lemma \ref{lemma-s-t-Cauchy} by letting $t\rightarrow 0$. Then apply the proof for Theorem \ref{thm-main} in Section \ref{section:Brown-Rdiag-proof-1} to recover Haagerup--Larsen--Schultz's density formula for Brown measure of $R$-diagonal operators. This is exactly the approach used in \cite[Section 5]{BSS2018}. 
\end{remark}

\section{Some miscellaneous applications}

The properties of subordination functions studied in previous sections play an important role for proving convergence of empirical spectral distribution of random matrices in single ring theorem and its extension \cite{Guionnet2011-singlering,BaoEL-local-ring}. The techniques from Section \ref{section:sub-HS} could simplify various technical arguments proved in a recent work about local single ring theorem \cite[Section 7, Supplment]{BaoEL-local-ring} (in particular, Lemma 7.4, 7.5 and Lemma 7.6 in that paper).

Recall that $\mu=\tilde\mu_{|T|}$, $\nu=\frac{1}{2}\big(\delta_{|\lambda|}+\delta_{-|\lambda|}\big)$ and $\lambda_1(\mu), \lambda_2(\mu)$ as in Section \ref{section:sub-HS}, and $s(|\lambda|,t)=\Im\omega_1(it)$.
\begin{proposition}
For any $\lambda_0$ such that $\lambda_1(\mu)<|\lambda_0|<\lambda_2(\mu)$, there is a constant $C>0$ depending on $\lambda$ such that
$\Im\omega_1(0)=s(|\lambda|,0)>C$ in some neighborhood of $\lambda_0$.
\end{proposition}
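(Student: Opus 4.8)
The plan is to deduce the statement entirely from the monotonicity and continuity properties of the map $r\mapsto s(r,0)$ already established in Proposition~\ref{prop:s(r,0)-increasing}, with no new analytic input about $\omega_1$ or $h$. First I would recall that, by that proposition, $r\mapsto s(r,0)$ is strictly increasing on $(\lambda_1(\mu),\lambda_2(\mu))$ and maps this interval bijectively onto $(0,\infty)$; in particular it is continuous there and takes only strictly positive values on the open interval. Note also the identification $\Im\omega_1(0)=\lim_{t\to 0^+}s(|\lambda|,t)=s(|\lambda|,0)$ coming from \eqref{eqn:limit_omega1}, so the asserted bound is exactly a lower bound on $s(|\lambda|,0)$.

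Since $\lambda_1(\mu)<|\lambda_0|<\lambda_2(\mu)$, I would fix a real number $a$ with $\lambda_1(\mu)<a<|\lambda_0|$ and set $C=s(a,0)$. Because $a$ lies in the open interval, the bijectivity statement gives $s(a,0)\in(0,\infty)$, so $C>0$. The neighborhood of $\lambda_0$ will then be taken to be the open disc $B(\lambda_0,\rho)$ with
\[
\rho=\tfrac12\min\bigl(|\lambda_0|-a,\ \lambda_2(\mu)-|\lambda_0|\bigr).
\]
For $\lambda\in B(\lambda_0,\rho)$ the triangle inequality yields $|\lambda|\ge |\lambda_0|-\rho>a$ and $|\lambda|\le|\lambda_0|+\rho<\lambda_2(\mu)$, so every such $\lambda$ lies in the annulus $\lambda_1(\mu)<|\lambda|<\lambda_2(\mu)$ on which $s(\,\cdot\,,0)$ is defined and positive.

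Finally, for each $\lambda$ in this disc, strict monotonicity of $r\mapsto s(r,0)$ together with $|\lambda|>a$ gives
\[
\Im\omega_1(0)=s(|\lambda|,0)>s(a,0)=C>0,
\]
which is the desired uniform lower bound. I do not expect any genuine obstacle: the claim is a soft local-positivity consequence of the global increasing-bijection property, and the only point requiring care is keeping the chosen disc inside the annulus where $s(\,\cdot\,,0)$ is defined, which is handled by shrinking $\rho$ as above.
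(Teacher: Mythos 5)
Your proposal is correct and rests on the same fact as the paper's one-line proof: the strict monotonicity of $s\mapsto k(s,0)$ (Lemma \ref{lemma:monotonicity-k-s-0}), which you access through its consequence Proposition \ref{prop:s(r,0)-increasing} that $r\mapsto s(r,0)$ is a strictly increasing positive bijection on the open interval. You simply spell out the elementary step of choosing $a\in(\lambda_1(\mu),|\lambda_0|)$, $C=s(a,0)$, and a disc kept inside the annulus, which the paper leaves implicit.
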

\begin{proof}
It follows from the monotonicity of $k(s,t)$ in Lemma \ref{lemma:monotonicity-k-s-0} and Lemma \ref{lemma:monotoniciity-k-s-t}. 
\end{proof}

\begin{proposition}
		Let $T$ be a bounded $R$-diagonal operator.
	For any $\lambda$ such that $\lambda_1(\mu)<|\lambda|<\lambda_2(\mu)$, 
there are some positive constants $C_1, C_2$ depending on $\lambda$ such that, for all $t\geq 0$, we have 
\[
  C_1 \min\Big\{1,\frac{1}{t} \Big\}\leq \Im\omega_1(it)-t=s(|\lambda|,t)-t\leq C_2 \min\Big\{1,\frac{1}{t} \Big\}.
\]
\end{proposition}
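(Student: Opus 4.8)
The plan is to analyze the single function $g(t):=s(|\lambda|,t)-t=\Im\omega_1(it)-t$ directly through the fixed point equation \eqref{eqn:fixed-point-s-t-abs-lambda}. Writing that equation in terms of $w=s-t=g(t)$ yields the clean relation
\[
h\big(t+g(t)\big)=\frac{g(t)}{g(t)^2+|\lambda|^2},
\]
which will be the main analytic tool. From Lemma~\ref{lemma-limit-s-0} the map $t\mapsto s(|\lambda|,t)$ is analytic on $(0,\infty)$, extends continuously to $t=0$ with $s(|\lambda|,0)=s_0\in(0,\infty)$ by \eqref{eqn:limit_omega1} (finiteness and positivity of $s_0$ coming from the bijection in Lemma~\ref{lemma:monotonicity-k-s-0}, since $\lambda_1(\mu)<|\lambda|<\lambda_2(\mu)$), and satisfies $s(|\lambda|,t)>t$. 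Hence $g$ is continuous and strictly positive on all of $[0,\infty)$.

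The strategy is to split $[0,\infty)$ at $t=1$, where $\min\{1,1/t\}$ switches from the constant $1$ to $1/t$, and to control $g$ on each piece by a compactness-plus-limit argument. On $[0,1]$ we have $\min\{1,1/t\}=1$, and since $g$ is continuous and positive on the compact interval $[0,1]$ (using $g(0)=s_0>0$), it is bounded between two positive constants $C_1',C_2'$ there. On $[1,\infty)$ we have $\min\{1,1/t\}=1/t$, so it suffices to show that $t\,g(t)$ is bounded between two positive constants; as $t\,g(t)$ is continuous and positive on $[1,\infty)$, this reduces to proving it has a finite, strictly positive limit as $t\to\infty$.

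The key computation, and the step I expect to be the main obstacle, is the asymptotic $\lim_{t\to\infty}t\,g(t)=|\lambda|^2$. Here I would use that $g(t)\to 0$ as $t\to\infty$ (the limit $\lim_{t\to\infty}(s(|\lambda|,t)-t)=0$ established in Lemma~\ref{lemma-limit-s-0}) together with $\lim_{s\to\infty}s\,h(s)=1$. Multiplying the displayed relation by $s=t+g(t)$ gives
\[
\big(t+g(t)\big)\,h\big(t+g(t)\big)=\frac{\big(t+g(t)\big)\,g(t)}{g(t)^2+|\lambda|^2};
\]
the left-hand side tends to $1$ as $t\to\infty$ since $t+g(t)\to\infty$, while on the right-hand side $g(t)^2\to 0$ in both numerator and denominator, forcing $t\,g(t)\to|\lambda|^2$. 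The boundedness of $T$ guarantees $\lambda_2(\mu)=\|T\|_2<\infty$, so the admissible range $\lambda_1(\mu)<|\lambda|<\lambda_2(\mu)$ is nonempty and $s_0$ is genuinely finite, which is what makes the $[0,1]$ estimate meaningful.

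Finally I would combine the two regimes. On $[1,\infty)$ the positive limit $|\lambda|^2$ together with continuity and positivity of $t\,g(t)$ yields, for $t$ beyond some $M$, the bound $t\,g(t)\in(|\lambda|^2/2,2|\lambda|^2)$, while on the compact $[1,M]$ continuity and positivity give a positive lower and finite upper bound; this produces constants $C_1'',C_2''>0$ with $C_1''\le t\,g(t)\le C_2''$, i.e. $C_1''\,\min\{1,1/t\}\le g(t)\le C_2''\,\min\{1,1/t\}$ on $[1,\infty)$. Taking $C_1=\min\{C_1',C_1''\}$ and $C_2=\max\{C_2',C_2''\}$ then gives the asserted two-sided bound for all $t\ge 0$.
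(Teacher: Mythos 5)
Your proof is correct, and it takes a genuinely different route from the paper's. The paper works from the explicit quadratic-formula expression \eqref{eqn:s-t-for-large-t}, valid for $t>t_\lambda$, to get the two-sided bound $|\lambda|^2 h(s)<s-t<2|\lambda|^2 h(s)$; it then bounds $h(s)$ above by $1/s<1/t$ and below by rewriting $h(s)=h_\lambda(t)=\int_\R \frac{t}{t^2+u^2}\,d(\mu\boxplus\nu)(u)\geq \frac{t}{t^2+M}$, which is where the boundedness of $T$ (compact support of $\mu\boxplus\nu$) enters; the compact range $[0,t_\lambda]$ is then dispatched implicitly by continuity. You instead extract the precise asymptotic $t\,g(t)\to|\lambda|^2$ directly from the fixed-point identity $h(t+g(t))=g(t)/(g(t)^2+|\lambda|^2)$ combined with $sh(s)\to 1$ and $g(t)\to 0$, and handle the compact pieces explicitly. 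Your route buys two things: it sidesteps the sign-ambiguity issue in the quadratic formula (the paper must restrict to $t>t_\lambda$ precisely because the correct branch is only identified for large $t$, whereas your identity holds for all $t>0$), and it does not use compact support of $\mu\boxplus\nu$, so the two-sided bound for large $t$ would survive without the boundedness hypothesis; the paper's computation, on the other hand, gives concrete, non-asymptotic constants ($|\lambda|^2$ and $2|\lambda|^2$) on the whole range $t>t_\lambda$ rather than only eventually. Both arguments rest on the same inputs from Lemma \ref{lemma-limit-s-0} ($s(r,t)>t$, continuity at $t=0$ with $s(r,0)=s_0>0$, and $s(r,t)-t\to 0$).
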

\begin{proof}
	Recall from \eqref{eqn:s-t-for-large-t} that there is some $t_\lambda>0$ so that for any $t>t_\lambda$, we have 
	\[
	s-t=\frac{1-\sqrt{1-4|\lambda|^2h(s)^2}}{2h(s)}=\frac{2|\lambda|^2 h(s)}{1+\sqrt{1-4|\lambda|^2h(s)^2}}.
	\]
	Hence, for $t>t_\lambda$, we have 
	\[
	|\lambda|^2 h(s)<s-t<2|\lambda|^2 h(s).
	\]
	Since $h(s)=\int_\mathbb{R}\frac{s}{s^2+u^2}d\mu(u)$, for any $t>t_\lambda$, we then have 
	 \[
	    s-t<\frac{2|\lambda|^2}{s}\int_\mathbb{R}\frac{s^2}{s^2+u^2}d\mu(u)<\frac{2|\lambda|^2}{s}
	       <\frac{2|\lambda|^2}{t}.
	 \]
	As  $T$ is bounded, it follows that $\mu\boxplus\nu$ is compactly supported.
Using the identity $h(s)=h_\lambda(t)$, we then have  
\[
   s-t>|\lambda|^2h(s)=|\lambda|^2\int_\mathbb{R}\frac{t}{t^2+u^2}d\mu\boxplus\nu(t)
      >\frac{|\lambda|^2 t}{t^2+M}
\]
for some $M>0$.  It is then clear that there are $C_1, C_2$ so the desired inequality holds. 
\end{proof}

The negative moments of the resolvent of bounded $R$-diagonal operators were treated in \cite{HKS-2010-tams}. Let us illustrate an example using subordination functions to calculate the first order negative moment for any $T\in\CMD$. 
We expect that the approach can calculate higher order negative moments of unbounded $R$-diagonal operators. 

\begin{proposition}
	\label{prop:negative-moment-1st}
	Let $T\in\CMD$ be $R$--diagonal and $\lambda\in\mathbb{C}\backslash\{0\}$. Then 
	\begin{align*}
	& \tau\huge( [(T-\lambda \unit)^*(T-\lambda \unit)]^{-1} \huge)\\
	=&
	\lim_{t\rightarrow 0^+}
	\tau\huge( [(T-\lambda \unit)^*(T-\lambda \unit)+t^2\unit]^{-1} \huge)\\
	=&
	\begin{cases}
	\frac{1}{\lambda_1(\mu)^2-|\lambda|^2}, \quad & \text{for} \quad 0< |\lambda|\leq \lambda_1(\mu_{|T|});\\
	\infty,\quad &\text{for} \quad \lambda_1(\mu_{|T|})<|\lambda|<\lambda_2(\mu_{|T|});\\
	\frac{1}{|\lambda|^2-\lambda_2(\mu)^2},\quad &\text{for}\quad |\lambda|\geq \lambda_2(\mu_{|T|}).
	\end{cases} 
	\end{align*}
\end{proposition}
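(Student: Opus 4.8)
The plan is to reduce the trace to the scalar function $h_\lambda$ of Definition~\ref{defn-h} and then exploit the fixed-point equation for the subordination function. By the very definition of $h_\lambda$,
\[
\tau\big( [(T-\lambda\unit)^*(T-\lambda\unit)+t^2\unit]^{-1} \big) = \frac{h_\lambda(t)}{t},
\]
so the first equality in the statement asserts that $\lim_{t\to 0^+} h_\lambda(t)/t$ computes $\tau\big([(T-\lambda\unit)^*(T-\lambda\unit)]^{-1}\big)$ as a value in $[0,\infty]$. I would obtain this from normality of $\tau$: since $t\mapsto [(T-\lambda\unit)^*(T-\lambda\unit)+t^2\unit]^{-1}$ increases as $t\downarrow 0$ to the (possibly unbounded, possibly non-integrable) operator $[(T-\lambda\unit)^*(T-\lambda\unit)]^{-1}$, the monotone convergence theorem for the normal trace gives equality of the (extended-real-valued) traces. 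Thus it remains only to evaluate $\lim_{t\to 0^+} h_\lambda(t)/t$.

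Next I would rewrite $h_\lambda$ using the subordination identity \eqref{eqn:identity-h-lambda-h}, namely $h_\lambda(t)=h(s)$ with $s=s(|\lambda|,t)$, together with the fixed-point equation \eqref{eqn:fixed-point-s-t-abs-lambda}. Solving the latter for $h(s)$ yields $h(s)=\dfrac{s-t}{(s-t)^2+|\lambda|^2}$, so that
\[
\tau\big( [(T-\lambda\unit)^*(T-\lambda\unit)+t^2\unit]^{-1} \big) = \frac{s-t}{t\big[(s-t)^2+|\lambda|^2\big]}.
\]
Everything is now reduced to the boundary behaviour of $s=s(|\lambda|,t)$ as $t\to 0^+$, which is precisely the content of Lemma~\ref{lemma-limit-s-0}. (Since $\mu=\tilde\mu_{|T|}$ is symmetric, $u^{-2}$ and $u^2$ are even, so $\lambda_i(\mu)=\lambda_i(\mu_{|T|})$ and the case labels in the statement match the thresholds of that lemma.)

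The three regimes are then routine limits. For $\lambda_1(\mu)<|\lambda|<\lambda_2(\mu)$ one has $s\to s(|\lambda|,0)\in(0,\infty)$, hence $h(s)\to h(s(|\lambda|,0))>0$ is finite and positive, so the quotient $h(s)/t$ blows up, giving $\infty$. For $0<|\lambda|\le\lambda_1(\mu)$ one has $s\to 0$ and, by Lemma~\ref{lemma-limit-s-0}, $t/s\to(\lambda_1(\mu)^2-|\lambda|^2)/\lambda_1(\mu)^2$, whence $(s-t)/t\to |\lambda|^2/(\lambda_1(\mu)^2-|\lambda|^2)$ while $(s-t)^2+|\lambda|^2\to|\lambda|^2$; dividing gives $1/(\lambda_1(\mu)^2-|\lambda|^2)$. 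For $|\lambda|\ge\lambda_2(\mu)$ one has $s\to\infty$ with $st\to |\lambda|^2-\lambda_2(\mu)^2$, so $h(s)\sim 1/s$ and $h(s)/t\sim 1/(st)\to 1/(|\lambda|^2-\lambda_2(\mu)^2)$. At the two endpoints $|\lambda|=\lambda_1(\mu)$ and $|\lambda|=\lambda_2(\mu)$ the closed-form answers degenerate to $\infty$, consistently with the respective limits $t/s\to 0$ and $st\to 0$.

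The only genuinely delicate point is the first equality: one must justify passing the limit $t\to 0^+$ inside the trace when the limiting operator need not be bounded and its trace may be $+\infty$ (as in the whole middle regime and on the boundary circles). I expect this monotone-convergence step, rather than the limit computations, to be where care is required; the remaining work is a transparent consequence of \eqref{eqn:fixed-point-s-t-abs-lambda} and Lemma~\ref{lemma-limit-s-0}.
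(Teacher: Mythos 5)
Your proof is correct and follows essentially the same route as the paper: both reduce the trace to $h_\lambda(t)/t=h(s(|\lambda|,t))/t$ and evaluate the three regimes via the boundary behaviour of $s(|\lambda|,t)$ from Lemma~\ref{lemma-limit-s-0}. Your explicit monotone-convergence justification of the first equality and the intermediate substitution $h(s)=\frac{s-t}{(s-t)^2+|\lambda|^2}$ are harmless variations that lead to the same limits the paper computes directly from $h(s)/s$ and $sh(s)$.
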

\begin{proof}
	Using Lemma \ref{lemma-limit-s-0}, when $0< |\lambda|\leq \lambda_1(\mu_{|T|})$ and $\lambda_1(\mu)>0$, we have
	\begin{align*}
	\tau\huge( [(T-\lambda \unit)^*(T-\lambda \unit)]^{-1} \huge)
	=&
	\lim_{t\rightarrow 0^+}
	\tau\huge( [(T-\lambda \unit)^*(T-\lambda \unit)+t^2\unit]^{-1} \huge)\\
	=&\lim_{t\rightarrow 0^+} \frac{h_\lambda(t)}{t}
	=\lim_{t\rightarrow 0^+}\frac{h(s(|\lambda|,t))}{t}\\
	=&\lim_{s\rightarrow 0^+}\frac{h(s)}{s}\lim_{t\rightarrow 0^+}\frac{s(|\lambda|,t)}{t}\\
	=&\int_0^\infty \frac{1}{u^2}d\tilde{\mu}_{|T|}\,\frac{\lambda_1(\mu_{|T|})^2}{\lambda_1(\mu_{|T|})^2)-|\lambda|^2}\\
	=&\frac{1}{\lambda_1(\mu_{|T|})^2-|\lambda|^2}.
	\end{align*}
	Similarly, when $|\lambda|\geq \lambda_2(\mu_{|T|})$, we have 
	\begin{align*}
	\tau\huge( [(T-\lambda \unit)^*(T-\lambda \unit)]^{-1} \huge)
	=&
	\lim_{t\rightarrow 0^+}
	\tau\huge( [(T-\lambda \unit)^*(T-\lambda \unit)+t^2\unit]^{-1} \huge)\\
	=& \lim_{t\rightarrow 0^+}\frac{h_\lambda(t)}{t}\\
	=&\lim_{s\rightarrow\infty}sh(s)  \lim_{t\rightarrow 0^+}\frac{1}{s(|\lambda|,t)t}\\
	=&\frac{1}{|\lambda|^2-\lambda_2(\mu_{|T|})^2)}.
	\end{align*}
	When $\lambda_1(\mu_{|T|})<|\lambda|<\lambda_2(\mu_{|T|})$, as $\lim_{t\rightarrow 0^+}s(|\lambda|,t)=s(|\lambda|,0)>0$, we have 
	\begin{align*}
	\tau\huge( [(T-\lambda \unit)^*(T-\lambda \unit)]^{-1} \huge)
	=&  \lim_{t\rightarrow 0^+}
	\tau\huge( [(T-\lambda \unit)^*(T-\lambda \unit)+t^2\unit]^{-1} \huge)\\
	&=\lim_{t \rightarrow 0^+}\frac{h(s(|\lambda|,t)}{s(|\lambda|,t)}\frac{s(|\lambda|,t)}{t}
	=\infty.
	\end{align*}	
This finishes the proof. 
\end{proof}

\begin{example}
Let $c_t$ be Voiculescu's circular operator with variance $t$, and $g_t$ be a semicircular element of variance $t$,  and let $u$ be a Haar unitary operator that is $*$-free from $\{c_t, g_t\}$. In \cite{Zhong2021Brown}, the author calculated the Brown measure density of $g_t+u$. It is showed that the Brown measure of $g_t+u$ is the pushforward measure of $c_t+u$ under some natural pushforward map.  The operator $c_t+u$ is $R$-diagonal and the outer radii of the support of the Brown measure of $c_t+u$ is $\sqrt{t+1}$ and the inner radii is $\sqrt{(1-t)_+}$. Indeed, to calculate the inner radii $\lambda_1(c_t+u)=1/\tau[(c_t+u)^*(c_t+u)]$, we observe that $|c_t+u|  \underset{\ast\CD}{\sim} |c_t+ \unit|$ and then apply Proposition \ref{prop:negative-moment-1st}. 
\end{example}

\bigskip
\noindent
{\bf Acknowledgment.} The author wants to thank Serban Belinschi, Hari Bercovici, Kamil Szpojankowski, Alexandru Nica, and Zhi Yin for helpful discussions. The proof in Section \ref{section-alternative-proof} was found when the author was preparing his notes for an online learning seminar on Brown measure in summer 2021 organized by Professor Nica. He wants to thank seminar participants for useful comments. He is grateful to an anonymous referee and Professor Yemon Choi for very careful readings and suggestions which greatly improve the readability of this paper.

\bibliographystyle{acm}
\bibliography{BrownMeasure}

\begin{thebibliography}{10}

\bibitem{BaoEL-local-ring}
{\sc Bao, Z., Erd\"{o}s, L., and Schnelli, K.}
\newblock Local single ring theorem on optimal scale.
\newblock {\em Ann. Probab. 47}, 3 (2019), 1270--1334.

\bibitem{Belinschi2008}
{\sc Belinschi, S.~T.}
\newblock The {L}ebesgue decomposition of the free additive convolution of two
  probability distributions.
\newblock {\em Probab. Theory Related Fields 142}, 1-2 (2008), 125--150.

\bibitem{BB2007new}
{\sc Belinschi, S.~T., and Bercovici, H.}
\newblock A new approach to subordination results in free probability.
\newblock {\em J. Anal. Math. 101\/} (2007), 357--365.

\bibitem{BSS2018}
{\sc Belinschi, S.~T., Śniady, P., and Speicher, R.}
\newblock Eigenvalues of non-{H}ermitian random matrices and {B}rown measure of
  non-normal operators: Hermitian reduction and linearization method.
\newblock {\em Linear Algebra and its Applications 537\/} (2018), 48 -- 83.

\bibitem{BercoviciVoiculescu1993}
{\sc Bercovici, H., and Voiculescu, D.}
\newblock Free convolution of measures with unbounded support.
\newblock {\em Indiana Univ. Math. J. 42}, 3 (1993), 733--773.

\bibitem{BercoviciZhong2022Rdiag}
{\sc Bercovici, H., and Zhong, P.}
\newblock The brown measure of a sum of two free random variables, one of which
  is {R}-diagonal.
\newblock {\em arXiv:2209.12379\/}.

\bibitem{Bordenave-Chafai-circular}
{\sc Bordenave, C., and Chafa\"{\i}, D.}
\newblock Around the circular law.
\newblock {\em Probab. Surv. 9\/} (2012), 1--89.

\bibitem{Brown1986}
{\sc Brown, L.~G.}
\newblock Lidskii's theorem in the type {II} case.
\newblock In {\em Geometric methods in operator algebras ({K}yoto, 1983)},
  vol.~123 of {\em Pitman Res. Notes Math. Ser.} Longman Sci. Tech., Harlow,
  1986, pp.~1--35.

\bibitem{DykemaSZ20115}
{\sc Dykema, K., Sukochev, F., and Zanin, D.}
\newblock A decomposition theorem in {$\rm II_1$}-factors.
\newblock {\em J. Reine Angew. Math. 708\/} (2015), 97--114.

\bibitem{DykemaSZ2017-unbounded}
{\sc Dykema, K., Sukochev, F., and Zanin, D.}
\newblock An upper triangular decomposition theorem for some unbounded
  operators affiliated to {${\rm II}_1$}-factors.
\newblock {\em Israel J. Math. 222}, 2 (2017), 645--709.

\bibitem{FugledeKadison1952}
{\sc Fuglede, B., and Kadison, R.~V.}
\newblock Determinant theory in finite factors.
\newblock {\em Ann. of Math. (2) 55\/} (1952), 520--530.

\bibitem{Guionnet2011-singlering}
{\sc Guionnet, A., Krishnapur, M., and Zeitouni, O.}
\newblock The single ring theorem.
\newblock {\em Ann. of Math. (2) 174}, 2 (2011), 1189--1217.

\bibitem{Haagerup-R-S-new}
{\sc Haagerup, U.}
\newblock On {V}oiculescu's {$R$}- and {$S$}-transforms for free non-commuting
  random variables.
\newblock In {\em Free probability theory ({W}aterloo, {ON}, 1995)}, vol.~12 of
  {\em Fields Inst. Commun.} Amer. Math. Soc., Providence, RI, 1997,
  pp.~127--148.

\bibitem{HKS-2010-tams}
{\sc Haagerup, U., Kemp, T., and Speicher, R.}
\newblock Resolvents of {R}-diagonal operators.
\newblock {\em Trans. Amer. Math. Soc. 362}, 11 (2010), 6029--6064.

\bibitem{HaagerupLarsen2000}
{\sc Haagerup, U., and Larsen, F.}
\newblock Brown's spectral distribution measure for {R}-diagonal elements in
  finite von {N}eumann algebras.
\newblock {\em J. Funct. Anal. 176}, 2 (2000), 331 -- 367.

\bibitem{Haagerup-Moller-LLN}
{\sc Haagerup, U., and M\"{o}ller, S.}
\newblock The law of large numbers for the free multiplicative convolution.
\newblock In {\em Operator algebra and dynamics}, vol.~58 of {\em Springer
  Proc. Math. Stat.} Springer, Heidelberg, 2013, pp.~157--186.

\bibitem{HaagerupSchultz2007}
{\sc Haagerup, U., and Schultz, H.}
\newblock Brown measures of unbounded operators affiliated with a finite von
  {N}eumann algebra.
\newblock {\em Math. Scand. 100}, 2 (2007), 209--263.

\bibitem{HaagerupSchultz2009}
{\sc Haagerup, U., and Schultz, H.}
\newblock Invariant subspaces for operators in a general {${\rm II}_1$}-factor.
\newblock {\em Publ. Math. Inst. Hautes \'{E}tudes Sci.}, 109 (2009), 19--111.

\bibitem{Len2007jfa}
{\sc Lenczewski, R.}
\newblock Decompositions of the free additive convolution.
\newblock {\em J. Funct. Anal. 246}, 2 (2007), 330--365.

\bibitem{MingoSpeicherBook}
{\sc Mingo, J.~A., and Speicher, R.}
\newblock {\em Free probability and random matrices}, vol.~35 of {\em Fields
  Institute Monographs}.
\newblock Springer, New York; Fields Institute for Research in Mathematical
  Sciences, Toronto, ON, 2017.

\bibitem{NSS2001-Rdiag}
{\sc Nica, A., Shlyakhtenko, D., and Speicher, R.}
\newblock {$R$}-diagonal elements and freeness with amalgamation.
\newblock {\em Canad. J. Math. 53}, 2 (2001), 355--381.

\bibitem{NicaSpeicher-Rdiag}
{\sc Nica, A., and Speicher, R.}
\newblock {$R$}-diagonal pairs---a common approach to {H}aar unitaries and
  circular elements.
\newblock In {\em Free probability theory ({W}aterloo, {ON}, 1995)}, vol.~12 of
  {\em Fields Inst. Commun.} Amer. Math. Soc., Providence, RI, 1997,
  pp.~149--188.

\bibitem{Nica-Speicher-1998duke}
{\sc Nica, A., and Speicher, R.}
\newblock Commutators of free random variables.
\newblock {\em Duke Math. J. 92}, 3 (1998), 553--592.

\bibitem{SaffTotik1997book}
{\sc Saff, E.~B., and Totik, V.}
\newblock {\em Logarithmic potentials with external fields}, vol.~316 of {\em
  Grundlehren der Mathematischen Wissenschaften [Fundamental Principles of
  Mathematical Sciences]}.
\newblock Springer-Verlag, Berlin, 1997.
\newblock Appendix B by Thomas Bloom.

\bibitem{Voiculescu1991}
{\sc Voiculescu, D.}
\newblock Limit laws for random matrices and free products.
\newblock {\em Invent. Math. 104}, 1 (1991), 201--220.

\bibitem{DVV-general}
{\sc Voiculescu, D.}
\newblock {The coalgebra of the free difference quotient and free probability}.
\newblock {\em Int. Math. Res. Not. 2000}, 2 (01 2000), 79--106.

\bibitem{Zhong2021Brown}
{\sc Zhong, P.}
\newblock Brown measure of the sum of an elliptic operator and a free random
  variable in a finite von neumann algebra.
\newblock {\em arXiv:2108.09844v4\/} (2021).

\end{thebibliography}



\end{document}